\newtheorem{theorem}{Theorem}[section]
\newtheorem{lemma}[theorem]{Lemma}
\newtheorem{proposition}[theorem]{Proposition}
\theoremstyle{definition}
\newtheorem{definition}[theorem]{Definition}
\theoremstyle{remark}
\newtheorem{remark}[theorem]{Remark}
\newcommand{\norm}[1]{\left\lVert#1\right\rVert}
\DeclarePairedDelimiter{\floor}{\lfloor}{\rfloor}
\begin{document}

\title{Logical metatheorems for accretive and (generalized) monotone set-valued operators}

\author[Nicholas Pischke]{Nicholas Pischke}
\date{\today}
\maketitle
\vspace*{-5mm}
\begin{center}
{\scriptsize Department of Mathematics, Technische Universit\"at Darmstadt,\\
Schlossgartenstra\ss{}e 7, 64289 Darmstadt, Germany, \ \\ 
E-mail: pischkenicholas@gmail.com}
\end{center}

\maketitle
\begin{abstract}
Accretive and monotone operator theory are central branches of nonlinear functional analysis and constitute the abstract study of set-valued mappings between function spaces. This paper deals with the computational properties of certain large classes of operators, namely accretive and (generalized) monotone set-valued ones. In particular, we develop (and extend) for this field the theoretical framework of proof mining, a program in mathematical logic that seeks to extract computational information from prima facie `non-computational' proofs from the mainstream literature. To this end, we establish logical metatheorems that guarantee and quantify the computational content of theorems pertaining to accretive and (generalized) monotone set-valued operators. On one hand, our results unify a number of recent case studies, while they also provide characterizations of central analytical notions in terms of proof theoretic ones on the other, which provides a crucial perspective on needed quantitative assumptions in future applications of proof mining to these branches.
\end{abstract}
\noindent
{\bf Keywords:} Proof mining; Metatheorems; Accretive operators; (Generalized) monotone operators.\\ 
{\bf MSC2010 Classification:} 03F10, 03F35, 47H05, 47H06 

\section{Introduction}

\subsection{Motivation and summary}

For both practical and conceptual reasons, it is an interesting question what the computational content of a given mathematical theorem is. \emph{Proof mining} is a program in mathematical logic that seeks to extract computational information from prima facie `non-computational' proofs from the mainstream literature and in essence, this paper extends the state-of-the-art of the underlying logical approach to proof mining to be applicable for proofs from \emph{accretive} and \emph{monotone operator theory}, central branches of nonlinear functional analysis which constitute the abstract study of set-valued mappings between function spaces. In particular, we establish so-called logical metatheorems that guarantee and quantify the computational content of theorems pertaining to accretive and (generalized) monotone set-valued operators. These extensions of the previous logical results are expected to lead to many new case studies for proof mining of results from accretive and monotone operator theory.

\smallskip

In more detail, particular emphasis has recently been placed on problems involving such set-valued operators like in \cite{Koh2019b} in the context of Bauschke's solution \cite{Bau2003} to the zero displacement conjecture, in \cite{KKA2015} for abstract Cauchy problems, in \cite{KP2020} for iteration schemes using set-valued operators or in particular like in the case of the proximal point algorithm (see \cite{Mar1970,Roc1976}) and its adaptions and extensions as treated in \cite{DP2020,DP2021,Koh2020,Koh2021b,Koh2021,KLN2018,LAS2018,LP2021,LS2018,Pin2021,Sip2021}. These \emph{case studies} in proof mining provide quantitative results for some of the most prominent results from this area. However, in some ways they are also \emph{ad hoc} in that they are not captured by known metatheorems. It is thus a pressing issue in proof mining to establish a new metatheorem for treating the aforementioned operators. 
The main result of this paper is the establishment of such a metatheorem.

\smallskip

For the rest of this section, we provide a brief history of proof mining and how it leads up to this paper (Section \ref{briho}), as well as a more detailed overview of the contents of this paper (Section \ref{klink}).

\subsection{A brief history of proof mining}\label{briho}

We provide a brief overview of the key (historical) aspects of the proof mining program with a focus on the  \emph{logical metatheorems}, the logical `substrate' of this discipline.

\smallskip

First of all, proof mining as a subfield of mathematical logic emerged in the later 1990's and early 2000's through the works of U. Kohlenbach and his collaborators (going back conceptually to Kreisel's \emph{unwinding of proofs} from the 1950's, see \cite{Kre1951,Kre1952}) as an applied discipline which uses well-known proof interpretations like negative translations, Kreisel's \emph{modified realizability} and G\"odel's \emph{functional (Dialectica) interpretation} on actual mathematical theorems to extract explicit quantitative information like (uniform) witnesses or bounds. 

For instance, in the case of convergence statements, the logical results guarantee the existence of highly uniform \emph{rates of metastability} in the sense of T. Tao \cite{Tao2008b,Tao2008a} in very general situations and thus provide a logical perspective on Tao's `finitary analysis'. The proofs analyzed are, as common in ordinary mathematical practice, prima facie noneffective which makes this a nontrivial task. The development of proof mining is detailed comprehensively up to 2008 in the monograph \cite{Koh2008} (see also \cite{KO2003} for a survey of the early stages of proof mining) and recent progress, with a focus on nonlinear analysis and optimization, is surveyed in \cite{Koh2019}.

\smallskip

In that vein, proof mining is crucially supported by the previously mentioned \emph{general logical metatheorems}\footnote{Examples of such metatheorems may be found in \cite{GeK2008,GuK2016,Koh2005,KN2017,Leu2006,Leu2014,Sip2019}, as well as \cite{Koh2008}, for the metatheorems obtained via (modifications of) G\"odel's Dialectica interpretation, and \cite{FLP2019} for subsequent metatheorems obtained via the bounded functional interpretation \cite{FO2005} due to F. Ferreira and P. Oliva.} which guarantee the existence of quantitative information for large\footnote{For the metatheorems to apply, the theorems at hand need to confine to a certain logical form and possess proofs satisfying some restrictions on the principles involved.  Nonetheless, these restrictions still allow applications to a large numbers of actual proofs from the literature involving a wide range of non-computational `ideal principles', provided of course that these theorems and their corresponding proofs can be formalized (at least in theory) in the corresponding language.} classes of theorems and proofs from the literature, including nonlinear optimization and analysis. We stress that the proofs analyzed in proof mining may use classical logic and other `non-constructive' principles. Besides merely guaranteeing the existence of quantitative information, the metatheorems allow for an a priori estimation of their complexity (which can be as elementary as polynomials) and they provide an algorithmic approach towards actually extracting the quantitative information.

\smallskip

The first metatheorems in proof mining relied on pure systems of arithmetic in all finite types and consequently only covered applications involving Polish metric spaces as those can be represented in the underlying language. More modern systems include symbols for abstract metric and normed spaces (originating in \cite{GeK2008,Koh2005}) into the language of the underlying logics. This new approach opens the door for treating spaces which are not separable and thus not representable in the (bare) language of finite type arithmetic. In this way, the following spaces have been successfully studied in proof mining: general metric and normed spaces, so-called $W$-hyperbolic spaces, $\mathrm{CAT}(0)$-spaces, uniformly convex spaces and Hilbert spaces, among many others. Whether a class of spaces can be treated via metatheorems ultimately depends on the complexity and uniformity of the defining axioms. Besides the (much) greater scope, the new approach via abstract spaces also yields extremely \emph{uniform} bounds, being independent from most parameters appearing in the theorem being analyzed. 

\subsection{Extending the scope of proof mining to set-valued operators}\label{klink}

Towards establishing our new metatheorems, we first introduce new formal systems that extend the previously used ones for normed and inner product spaces. This is done via carefully selected constants and corresponding axioms that allow for the formalization of proofs involving  accretive, monotone and $\rho$-comonotone operators and their resolvents. We show how key parts of the theory of these operators can be formally carried out in these systems. In particular, we characterize the key property of an operator being maximal by equivalent notions involving formal extensionality of the operator. This new point of view provides crucial insight into the (uniform) quantitative assumptions that have to be placed on an operator if one wants to treat proofs involving essential applications of those maximality principles. This culminates in establishing general logical metatheorems for these systems (and suitable extensions), which in particular provide a `logical' explanation of the aforementioned case studies. 

\smallskip

The application of proof mining to concrete mathematical proofs can only be successful if our logical systems have a certain modularity in the following sense: specific problems may require us to extend our `main' logical system with specific mathematical objects or notions and associated axioms, all the while guaranteeing that our metatheorems still hold. As examples of such extensions, we shall discuss certain (formalized versions of) some common notions from mathematical practice which are crucial in the previously mentioned case studies, in particular discussing quantitative forms of extensionality for a set-valued operator, as well as range conditions and treating so-called selection functions, i.e. functionals $a:X\to X$ with $ax\in Ax$ for $x\in\mathrm{dom}A$ for a given set-valued operator $A$. In that context, motivated by logical aspects of the latter, we also introduce a new notion of majorizability for set-valued operators 

\smallskip

Our main guiding principle for the design of the aforementioned logical systems, as well as for the choice of extensions considered later, is provided by the previously discussed proof mining case studies in the context of accretive, monotone and generalized monotone operators over linear spaces. The applicability of the metatheorems established later will then in particular be justified by the fact that those case studies can indeed be recognized as applications of these metatheorems  and they thus provide the first logical explanation for these results. Further discussions regarding the use of the systems introduced here to (previous) case studies will also be given in \cite{KP2022}.

\smallskip

In conclusion, we expect that our metatheorems will be applicable for a wide range of new case studies involving accretive, monotone and generalized monotone operators. In particular, we strongly believe that the general methodological approach for treating set-valued operators and their resolvents and the new corresponding notions of, e.g., majorizable operators introduced in the later sections may serve as a basis for further metatheorems in the context of nonlinear analysis involving set-valued operators.

\section{Set-valued operators in Banach and Hilbert spaces}\label{sec:optheory}

We begin by surveying the basic notions and results for accretive and (generalized) monotone operators over normed and inner product spaces. Let $(X,\norm{\cdot})$ be a normed space, which will always be a real linear space in this paper.

\subsection{Properties of convex and nonexpansive functions}
Before moving on to set-valued operators, we first recall some fundamental notions for functions on normed and inner product spaces. For that, we follow the definitions of \cite{BR1977} (and \cite{BMW2020} regarding conically nonexpansive functions). Let $D\subseteq X$ be nonempty and let $T:D\to X$ be a function. Then $T$ is called
\begin{enumerate}
\item[(1)] \emph{nonexpansive} if
\[
\forall x,y\in D\left(\norm{Tx-Ty}\leq\norm{x-y}\right),
\]
\item[(2)] \emph{firmly nonexpansive} if
\[
\forall x,y\in D\forall r>0\left(\norm{Tx-Ty}\leq\norm{r(x-y)+(1-r)(Tx-Ty)}\right),
\]
\item[(3)] \emph{$\alpha$-averaged} if $\alpha\in (0,1)$ and
\[
T=(1-\alpha)Id+\alpha N
\]
for some nonexpansive $N:D\to X$,
\item[(4)] \emph{$\alpha$-conically nonexpansive} if $\alpha\in (0,\infty)$
\[
T=(1-\alpha)Id+\alpha N
\]
for some nonexpansive $N:D\to X$.
\end{enumerate}
In the case of $\alpha$-averaged or $\alpha$-conically nonexpansive operators, we will often use the trivially equivalent reformulation that
\[
(1-\alpha^{-1})Id+\alpha^{-1}T
\]
is nonexpansive for the respective $\alpha$.

\smallskip

There are various useful equivalent reformulations of these notions when we pass to inner product spaces which we describe in the following remark.
\begin{remark}\label{rem:altDefNE}
Let $X$ be an inner product space with inner product $\langle\cdot,\cdot\rangle$ and induced norm $\norm{\cdot}$. Then $T$ is firmly nonexpansive if, and only if
\[
\forall x,y\in D\left(\langle x-y,Tx-Ty\rangle\geq\norm{Tx-Ty}^2\right),
\]
$\alpha$-averaged if, and only if $\alpha\in (0,1)$ and
\[
\forall x,y\in D\left((1-\alpha)\norm{(Id-T)x-(Id-T)y}^2\leq\alpha\left(\norm{x-y}^2-\norm{Tx-Ty}^2\right)\right),
\]
and $\alpha$-conically nonexpansive if, and only if $\alpha\in (0,\infty)$ and
\begin{align*}
&\forall x,y\in D\Big(2\alpha\langle Tx-Ty,(Id-T)x-(Id-T)y\rangle\\
&\qquad\qquad\qquad\qquad\geq (1-2\alpha)\norm{(Id-T)x-(Id-T)y}^2\Big).
\end{align*}
It can be easily seen that $T$ is firmly nonexpansive if, and only if $T$ is $1/2$-averaged.
\end{remark}
Proofs of these facts can be found in \cite{BC2017} and, respectively, \cite{BMW2020} for the $\alpha$-conically nonexpansive case. We will present formal versions of some of these proofs in the later sections in the context of formal systems for abstract normed and inner product spaces.
\subsection{Properties of set-valued operators}
A set-valued operator on a space $X$ is simply a mapping $A:X\to 2^X$.

\smallskip

For such a set-valued operator $A$, we define $\mathrm{gra} A:=\left\{(x,u)\in X\times X\mid u\in Ax\right\}$ as well as $\mathrm{dom}A:=\left\{x\in X\mid Ax\neq\emptyset\right\}$ and $\mathrm{ran}A:=\bigcup_{x\in X}Ax$. We write $A^{-1}$ for the operator defined by $x\in A^{-1}u$ iff $u\in Ax$. We set $\lambda A$ by $(\lambda A)x:=\{\lambda u\mid u\in Ax\}$. If $B$ is another set-valued operator on $X$, we define $A+B$ via $(A+B)x:=\{u+v\mid u\in Ax\land v\in Bx\}$.

\smallskip

The main classes of set-valued operators explored here are the analytically motivated accretive, monotone and $\rho$-comonotone operators. All impose some form of `separability' on the sets $Ax$ in relation to a varying $x$.

\begin{definition}
$A$ is called \emph{accretive} (as introduced in \cite{Kat1967}) if 
\[
\forall (x,u),(y,v)\in\mathrm{gra} A,\lambda >0 \left(\norm{x-y+\lambda(u-v)}\geq\norm{x-y}\right)
\]
and $A$ is called \emph{m-accretive} if $\mathrm{ran}(Id+\gamma A)=X$ for all $\gamma>0$.
\end{definition}
Accretivity is also sometimes equivalently characterized (see, e.g., \cite{Bro1967} and also \cite{Dei1985,Tak2000}), using the normalized duality mapping $J:X\to 2^{X^*}$ defined via
\[
J(x):=\left\{j\in X^*\mid \langle x,j\rangle=\norm{x}^2=\norm{j}^2\right\}
\]
where $X^*$ is the dual space of $X$. Then $A$ is accretive iff
\[
\forall (x,u),(y,v)\in\mathrm{gra}A\exists j\in J(x-y)\left( \langle u-v,j\rangle\geq 0\right).
\]

Now, for an inner product space $(X,\langle\cdot,\cdot\rangle)$, we introduced a number of monotonicity notions. 
\begin{definition}
$A$ is called \emph{monotone} (essentially due to \cite{Min1960,Min1962}) if
\[
\forall (x,u),(y,v)\in\mathrm{gra} A\left(\langle x-y,u-v\rangle\geq 0\right)
\]
and $A$ is called \emph{maximally monotone} if it is monotone and $\mathrm{gra} A\subsetneq\mathrm{gra}B$ implies that $B$ is not monotone, i.e. the graph of $A$ is not properly contained in the graph of another monotone operator. 
\end{definition}
\begin{definition}
$A$ is called \emph{$\rho$-comonotone} for $\rho\in\mathbb{R}$ if 
\[
\forall (x,u),(y,v)\in\mathrm{gra}A\left(\langle x-y,u-v\rangle\geq\rho\norm{u-v}^2\right)
\]
and, similarly to before, $A$ is called \emph{maximally $\rho$-comonotone} if it is $\rho$-comonotone and there is no proper $\rho$-comonotone extension.
\end{definition}
We are here following the definitions and the exposition of \cite{BMW2020}. For $\rho<0$, the above notions were however already considered under the name of $\vert\rho\vert$-hypocomonotonicity in \cite{CP2004} (with its dual, $\vert\rho\vert$-hypomonotonicity, already considered in \cite{RW1998}).
\subsection{Resolvents and correspondence results}\label{sec:resolventprelim}
The main tool for studying these classes of set-valued operators is their \emph{resolvent} $J^A_\gamma$, defined as follows for $\gamma>0$:
\[
J^A_\gamma:=(Id+\gamma A)^{-1}.
\]
In particular, the following defining equivalence holds:
\[
p\in J^A_\gamma x\text{ iff }\gamma^{-1}(x-p)\in Ap.
\]
By that, one can also immediately see that that $J^A_\gamma$ (as a set-valued mapping) satisfies $\mathrm{dom}J^A_\gamma=\mathrm{ran}(Id+\gamma A)$ and $J^A_\gamma x\subseteq\mathrm{dom}A$ for all $x$.

The importance of the resolvent stems mainly from two aspects: 
\begin{enumerate}
\item[(1)] Resolvents are ubiquitous in algorithmic approaches to problems in accretive/monotone operator theory due to their asymptotic properties. 
\item[(2)] Most fundamental properties of the operator $A$ correspond to fundamental and well-studied properties of the resolvent, creating a strong form of duality (see in particular \cite{BMW2012}).
\end{enumerate}
We now survey those correspondence results in the spirit of the second item as they are central to our logical investigations later on. To this end, we begin with accretive operators on normed spaces. For further (basic) results on these accretive operators and their correspondence theory to their resolvents, see \cite{Bar1976}. We, however, want to in particular highlight the following result.
\begin{theorem}[essentially \cite{Bar1976,BR1977,Tak2000}]\label{thm:accretiveEquivalence}
Let $A$ be a set-valued operator on a normed space $X$. Then the following are equivalent: 
\begin{enumerate}
\item[(a)] $A$ is accretive,
\item[(b)] $J^A_\gamma$ is single-valued and firmly nonexpansive (on its domain) for all $\gamma>0$,
\item[(c)] $J^A_\gamma$ is single-valued and firmly nonexpansive (on its domain) for some $\gamma>0$,
\item[(d)] $J^A_\gamma$ is single-valued and nonexpansive (on its domain) for all $\gamma>0$.
\end{enumerate}
\end{theorem}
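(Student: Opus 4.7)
The plan is to prove the cycle (a)$\Rightarrow$(b)$\Rightarrow$(c)$\Rightarrow$(a) and then (b)$\Rightarrow$(d)$\Rightarrow$(a), relying throughout on the defining equivalence $p\in J^A_\gamma x\Leftrightarrow \gamma^{-1}(x-p)\in Ap$ recalled just before the theorem. The crucial algebraic observation is that the parameter $\lambda>0$ in the definition of accretivity and the parameter $r>0$ in the definition of firm nonexpansivity correspond to each other via a simple rescaling by $\gamma$; once this is noticed, each implication reduces to a line or two of computation.

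For (a)$\Rightarrow$(b), fix $\gamma>0$ and suppose $p,q\in J^A_\gamma x$. Then $(p,\gamma^{-1}(x-p))$ and $(q,\gamma^{-1}(x-q))$ are in $\mathrm{gra} A$, and applying accretivity with $\lambda=\gamma$ gives $\norm{p-q}\leq\norm{(p-q)+(x-p)-(x-q)}=0$, so $p=q$, which establishes single-valuedness. For firm nonexpansivity, let $p=J^A_\gamma x$, $q=J^A_\gamma y$ and apply accretivity with an arbitrary $\lambda>0$ to the pairs $(p,\gamma^{-1}(x-p)),(q,\gamma^{-1}(y-q))\in\mathrm{gra} A$:
\[
\norm{p-q}\leq\norm{(p-q)+\tfrac{\lambda}{\gamma}\bigl((x-y)-(p-q)\bigr)}=\norm{r(x-y)+(1-r)(p-q)},
\]
where $r:=\lambda/\gamma$ ranges over $(0,\infty)$ as $\lambda$ does, so this is precisely the defining inequality of firm nonexpansivity. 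The implication (b)$\Rightarrow$(c) is trivial, and (b)$\Rightarrow$(d) follows by specializing the firm nonexpansivity inequality to $r=1$, giving $\norm{J^A_\gamma x-J^A_\gamma y}\leq\norm{x-y}$.

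For (c)$\Rightarrow$(a), fix the witnessing $\gamma>0$ and take arbitrary $(x,u),(y,v)\in\mathrm{gra} A$. Since $\gamma^{-1}((x+\gamma u)-x)=u\in Ax$, we have $x\in J^A_\gamma(x+\gamma u)$, and by single-valuedness $J^A_\gamma(x+\gamma u)=x$; similarly $J^A_\gamma(y+\gamma v)=y$. Firm nonexpansivity applied to the points $x+\gamma u$ and $y+\gamma v$ now yields, for every $r>0$,
\[
\norm{x-y}\leq\norm{r\bigl((x+\gamma u)-(y+\gamma v)\bigr)+(1-r)(x-y)}=\norm{(x-y)+r\gamma(u-v)},
\]
and setting $\lambda:=r\gamma$ (which again ranges over $(0,\infty)$) recovers accretivity. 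The same strategy handles (d)$\Rightarrow$(a): using only the plain nonexpansivity inequality $\norm{J^A_\gamma(x+\gamma u)-J^A_\gamma(y+\gamma v)}\leq\norm{(x+\gamma u)-(y+\gamma v)}$ and letting $\gamma>0$ vary directly produces the accretivity inequality with $\lambda=\gamma$.

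I do not anticipate a substantive obstacle: all four conditions express the same inequality up to a reparameterization dictated by the identity $\gamma^{-1}(x-p)\in Ap$. The only subtle point is that in a general (non-Hilbert) normed space one must work directly with the norm inequality form of firm nonexpansivity (item~(2) in the preceding definition), rather than with an inner-product reformulation as in Remark~\ref{rem:altDefNE}; keeping track of the correspondence $r\leftrightarrow\lambda/\gamma$ is what makes the argument go through uniformly in the normed setting.
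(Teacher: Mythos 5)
Your proof is correct and takes essentially the same route as the paper: the forward implications (a)$\Rightarrow$(b)$\Rightarrow$(d) via the rescaling $r=\lambda/\gamma$ are exactly the computations formalized in Proposition \ref{pro:respropaccr}, and the converse directions you obtain by observing $x=J^A_\gamma(x+\gamma u)$ for $u\in Ax$ are the standard arguments the paper defers to the cited references (cf.\ Remark \ref{rem:resolventprop} and the proof of Lemma \ref{lem:accretivechara}).
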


The main reference for monotone operators in Hilbert spaces is the comprehensive monograph \cite{BC2017}. Regarding those monotone operators, we rely on the following correspondence result.
\begin{theorem}[essentially \cite{Bar1976,BR1977,Min1962}]\label{thm:monotoneEquivalence}
Let $X$ be a Hilbert space and $A$ a set-valued operator.
\begin{enumerate}
\item[(1)] Items (a) - (d) of Theorem \ref{thm:accretiveEquivalence} are equivalent to
\begin{enumerate}
\item[(e)] $A$ is monotone.
\end{enumerate}
\item[(2)] $A$ is maximally monotone if and only if $J^A_\gamma$ is single-valued, firmly nonexpansive and $\mathrm{ran}(\mathrm{Id}+\gamma A)=X$ for some/any $\gamma>0$. 
\end{enumerate}
\end{theorem}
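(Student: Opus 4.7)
The plan is to handle the two parts separately, with part (1) reducing essentially to Theorem \ref{thm:accretiveEquivalence} and part (2) splitting into an easy backward direction and a nontrivial forward direction which is Minty's surjectivity theorem.

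For part (1), the fastest route is to note that in a Hilbert space the Riesz representation identifies $X^*$ with $X$ in such a way that the normalized duality mapping satisfies $J(x)=\{x\}$. Under this identification, the alternative characterization of accretivity via the duality mapping (recalled just after the definition of accretive) collapses to $\langle x-y,u-v\rangle\geq 0$ for all $(x,u),(y,v)\in\mathrm{gra}\,A$, which is exactly monotonicity. Hence (a) $\Leftrightarrow$ (e), and the equivalences (a) $\Leftrightarrow$ (b)--(d) are inherited from Theorem \ref{thm:accretiveEquivalence}. To avoid the duality mapping altogether one could give a direct proof of (e) $\Leftrightarrow$ (b): for $p\in J^A_\gamma x$ and $q\in J^A_\gamma y$, monotonicity applied to $(p,\gamma^{-1}(x-p)),(q,\gamma^{-1}(y-q))\in\mathrm{gra}\,A$ yields $\langle p-q,x-y\rangle\geq\norm{p-q}^2$, which via Remark \ref{rem:altDefNE} simultaneously forces single-valuedness (take $x=y$) and firm nonexpansiveness, while the converse reverses this calculation.

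For the backward direction of part (2) I would run the standard Minty-type extension argument. Assuming $J^A_\gamma$ is single-valued, firmly nonexpansive and $\mathrm{ran}(Id+\gamma A)=X$ for some $\gamma>0$, part (1) already gives that $A$ is monotone. To prove maximality, fix any monotone $B\supseteq A$ and $(x_0,u_0)\in\mathrm{gra}\,B$; by surjectivity choose $(p,w)\in\mathrm{gra}\,A$ with $p+\gamma w=x_0+\gamma u_0$, so that $x_0-p=\gamma(w-u_0)$. Since $(p,w)\in\mathrm{gra}\,A\subseteq\mathrm{gra}\,B$, monotonicity of $B$ yields $\langle x_0-p,u_0-w\rangle\geq 0$, i.e.\ $-\gamma\norm{u_0-w}^2\geq 0$, forcing $u_0=w$ and $x_0=p$; hence $(x_0,u_0)\in\mathrm{gra}\,A$ and no proper monotone extension of $A$ exists.

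The forward direction of part (2) is the main obstacle and amounts to Minty's classical surjectivity theorem: if $A$ is maximally monotone on a Hilbert space, then $\mathrm{ran}(Id+\gamma A)=X$ for every $\gamma>0$. The single-valuedness and firm nonexpansiveness of $J^A_\gamma$ follow at once from monotonicity via part (1). For the range identity I would either cite Minty's theorem directly from \cite{BC2017} or reproduce the classical argument showing $\mathrm{ran}(Id+\gamma A)$ to be nonempty (using Zorn's lemma to see $\mathrm{gra}\,A\neq\emptyset$ and to produce a single point in the range), open (via a Banach fixed-point argument exploiting firm nonexpansiveness of the partially defined resolvent), and closed (via a Cauchy argument together with a demiclosedness step), so that connectedness of $X$ forces $\mathrm{ran}(Id+\gamma A)=X$. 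This is the genuinely non-constructive step, relying on Zorn's lemma together with a weak-compactness/demiclosedness argument, and is worth flagging as such given the proof-mining perspective of the paper.
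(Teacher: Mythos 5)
Your proposal is correct in substance, but it is worth noting that the paper does not actually prove this theorem: it is stated as a survey result with references, Minty's theorem being explicitly flagged as a deep cited result, and only selected directions are later re-proved inside the formal systems (Proposition \ref{pro:accImplMono}, Remark \ref{rem:resolventprop}, and Lemma \ref{lem:accretivechara}). Where the paper does argue, it takes a slightly different route for part (1): instead of invoking the Riesz identification and the collapse $J(x)=\{x\}$ of the duality mapping, it derives (a) $\Leftrightarrow$ (e) from the elementary identity $\langle x,y\rangle\leq 0\iff\forall\alpha\geq 0\,(\norm{x}\leq\norm{x-\alpha y})$ (Proposition \ref{pro:accImplMono}(1)), which is better suited to formalization since the dual space never enters; your alternative ``direct proof of (e) $\Leftrightarrow$ (b)'' via $\langle p-q,x-y\rangle\geq\norm{p-q}^2$ is exactly the computation the paper formalizes in Proposition \ref{pro:accImplMono}(3) and Remark \ref{rem:resolventprop}(3)(b). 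Your backward direction of part (2) is the monotone analogue of the argument the paper gives for Lemma \ref{lem:accretivechara}(1), and is correct. Two small caveats on your sketch of Minty's theorem: Zorn's lemma is not what gives $\mathrm{gra}\,A\neq\emptyset$ (if the graph were empty, $\{(0,0)\}$ would be a proper monotone extension, so nonemptiness is immediate from maximality); and the openness step of the open--closed--connected argument is the delicate one --- firm nonexpansiveness of the partial resolvent alone does not hand you a contraction, and one typically either first solves the strongly monotone perturbed inclusion $z\in(1+\varepsilon)y+Ay$ or extends the resolvent by Kirszbraun--Valentine. Since your primary plan is to cite \cite{BC2017} for this step, as the paper itself effectively does, this does not affect correctness.
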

The last statement is known as Minty's theorem \cite{Min1962}, a deep result in monotone operator theory. We already see that maximality conditions are linked with the totality of the resolvent, a result which sets a characteristic theme in the correspondence theory as it extends to various other classes besides monotone operators. 

Indeed, as established in the main work on correspondence theory for $\rho$-comonotone operators \cite{BMW2020}, we have an analogous result for these generalized monotone operators.
\begin{theorem}[essentially \cite{BMW2020}]\label{thm:comonotoneEquivalence}
Let $X$ be a Hilbert space and $A$ be a set-valued operator.
\begin{enumerate}
\item[(1)] The following are equivalent where is each case $\alpha=\frac{1}{2(\rho/\gamma+1)}$:
\begin{enumerate}
\item $A$ is $\rho$-comonotone.
\item $J^A_\gamma$ is single-valued and $\alpha$-conically nonexpansive for all/some $\gamma>0$ with $\rho>-\gamma$.
\item $J^A_\gamma$ is single-valued and $\alpha$-averaged for all/some $\gamma>0$ with $\rho>-\gamma/2$.
\end{enumerate}
\item[(2)] $A$ is maximally $\rho$-comonotone if, and only if 
\begin{enumerate}
\item $J^A_\gamma$ is single-valued, $\alpha$-averaged and total for some/any $\gamma>0$ such that $\rho>-\gamma/2$, or
\item $J^A_\gamma$ is single-valued, $\alpha$-conically nonexpansive and total for some/any $\gamma>0$ such that $\rho>-\gamma$,
\end{enumerate}
where $\alpha=\frac{1}{2(\rho/\gamma+1)}$.
\end{enumerate}
\end{theorem}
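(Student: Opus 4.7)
The strategy is to leverage the defining resolvent identity $p\in J^A_\gamma x \iff \gamma^{-1}(x-p)\in Ap$ in order to translate between the $\rho$-comonotonicity inequality for $A$ and the $\alpha$-conically nonexpansive inequality for $J^A_\gamma$ in the equivalent form provided by Remark \ref{rem:altDefNE}. The key algebraic fact that makes the constants line up is that for $\alpha = \frac{1}{2(\rho/\gamma+1)}$ one has $\frac{1-2\alpha}{2\alpha} = \frac{\rho}{\gamma}$, so the two inequalities differ only by a factor of $\gamma^2$ and the change of variables induced by the resolvent.

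For part (1), I would first show single-valuedness of $J^A_\gamma$ under (a) together with $\rho > -\gamma$: if $p,q\in J^A_\gamma x$, then $\gamma^{-1}(x-p)\in Ap$ and $\gamma^{-1}(x-q)\in Aq$, so $\rho$-comonotonicity yields $-\gamma^{-1}\norm{p-q}^2 \geq \rho\gamma^{-2}\norm{p-q}^2$, which forces $p=q$. For (a)$\Rightarrow$(b), fix $\gamma>0$ with $\rho>-\gamma$, set $p=J^A_\gamma x$, $q=J^A_\gamma y$ and $u=\gamma^{-1}(x-p)\in Ap$, $v=\gamma^{-1}(y-q)\in Aq$; feeding $(p,u),(q,v)$ into the $\rho$-comonotone inequality, multiplying by $\gamma^2$, and rewriting $x-p = (\mathrm{Id}-J^A_\gamma)x$ produces exactly the conically nonexpansive inequality of Remark \ref{rem:altDefNE} with the stated $\alpha$. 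For (b)$\Rightarrow$(a), given $(p,u),(q,v)\in\mathrm{gra}A$, set $x := p+\gamma u$ and $y := q+\gamma v$, so that $p=J^A_\gamma x$ and $q=J^A_\gamma y$ by single-valuedness, and run the computation in reverse. Since the intermediate condition ($\rho$-comonotonicity) is $\gamma$-independent, this simultaneously justifies the `some/all' quantification. Part (c) then follows because an $\alpha$-conically nonexpansive map with $\alpha\in(0,1)$ is precisely $\alpha$-averaged, and the constraint $\frac{1}{2(\rho/\gamma+1)}<1$ translates exactly to $\rho>-\gamma/2$.

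For part (2), the easy direction (totality $\Rightarrow$ maximality) goes as follows: if $B$ were a proper $\rho$-comonotone extension of $A$, part (1) applied to $B$ would give that $J^B_\gamma$ is single-valued, and the resolvent identity shows $\mathrm{gra}\,J^A_\gamma \subseteq \mathrm{gra}\,J^B_\gamma$; but $J^A_\gamma$ is already single-valued and total, hence $J^A_\gamma = J^B_\gamma$, and inverting the identity recovers $A=B$, a contradiction. The harder forward direction (maximality $\Rightarrow$ totality) is the generalized Minty-type result of \cite{BMW2020}; I would argue by contradiction, assuming $y\notin\mathrm{ran}(\mathrm{Id}+\gamma A)$ and extending the single-valued $\alpha$-conically nonexpansive map $T=J^A_\gamma$ to include $y$ in its domain while preserving conical nonexpansiveness (a Kirszbraun-type single-point extension in Hilbert space), then transporting this extension back through the resolvent identity to build a proper $\rho$-comonotone extension of $A$.

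The principal obstacle is this last step: one must establish a single-point extension lemma for $\alpha$-conically nonexpansive (or $\alpha$-averaged) maps, which is the analytic core of the Minty-type result for generalized monotone operators. Once such an extension principle is in hand, everything else reduces to elementary manipulation of the resolvent identity combined with the algebraic matching $\frac{(1-2\alpha)\gamma}{2\alpha} = \rho$ that relates the parameters $\rho$, $\gamma$, and $\alpha$.
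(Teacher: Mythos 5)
Your proposal is essentially correct and follows the same route as the source the paper cites for this result: the paper itself gives no self-contained proof of Theorem \ref{thm:comonotoneEquivalence} (it is attributed to \cite{BMW2020}), and only formalizes pieces of it later, namely single-valuedness and the (a)$\Rightarrow$(b),(c) directions in Proposition \ref{pro:UFundTheorems}, the converse directions in Remark \ref{rem:resolventpropU}, and the maximality discussion in Theorem \ref{thm:comonExtEquiv}. Your part (1) is complete: the single-valuedness computation and the translation between the comonotonicity inequality and the conically nonexpansive inequality via the identity $\frac{(1-2\alpha)\gamma}{2\alpha}=\rho$ are exactly the computations appearing in Proposition \ref{pro:UFundTheorems}, and your handling of the some/all quantification and of the passage (b)$\Rightarrow$(c) via $\alpha<1\Leftrightarrow\rho>-\gamma/2$ is correct. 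For the easy direction of part (2) you argue by contradiction through the inclusion $\mathrm{gra}\,J^A_\gamma\subseteq\mathrm{gra}\,J^B_\gamma$, whereas the paper's analogous arguments (Lemma \ref{lem:accretivechara} and the implication $(2)\Rightarrow(3)$ of Theorem \ref{thm:comonExtEquiv}) proceed directly by evaluating $J^A_\gamma$ at $x+\gamma u$ and showing $x=J^A_\gamma(x+\gamma u)$; both are valid, and yours has the small advantage of not needing the pointwise reformulation of maximality, while the paper's version is the one that survives the later proof-theoretic analysis of extensionality. The forward direction of part (2), maximality $\Rightarrow$ totality, you correctly identify as a Kirszbraun--Valentine single-point extension of the nonexpansive part $N$ in $T=(1-\alpha)\mathrm{Id}+\alpha N$ transported back through the resolvent correspondence; this is indeed how \cite{BMW2020} proceeds, and since the paper under review also does not prove this step but defers to that reference, your honestly flagged reliance on the extension lemma is not a gap relative to the paper. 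One small point worth making explicit in your transport step: the newly added graph point $(\tilde Ty,\gamma^{-1}(y-\tilde Ty))$ is genuinely not in $\mathrm{gra}\,A$, since otherwise $y\in\mathrm{ran}(\mathrm{Id}+\gamma A)$, contradicting the choice of $y$.
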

This correspondence between totality of the resolvent and set-theoretic maximality does not extend to accretive operators on normed spaces as first asked in \cite{CP1969} and then answered in \cite{Cal1970,CL1971} negatively. The one direction that remains valid is the following:
\begin{lemma}[essentially \cite{CP1969}]\label{lem:accretivechara}
\begin{enumerate}
\item[(1)] If $\mathrm{ran}(\mathrm{Id}+\gamma A)=X$ for some $\gamma>0$, then $A$ has no proper accretive extension.
\item[(2)] Let $A$ be accretive. If $\mathrm{ran}(\mathrm{Id}+\gamma A)=X$ for some $\gamma>0$, then $\mathrm{ran}(\mathrm{Id}+\gamma A)=X$ for all $\gamma>0$.
\end{enumerate}
\end{lemma}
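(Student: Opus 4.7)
For part (1), my plan is a direct graph-chasing argument. Given any accretive operator $B$ with $\mathrm{gra}A\subseteq \mathrm{gra}B$ and any $(y,v)\in\mathrm{gra}B$, I would use the range hypothesis to pick $(x,u)\in\mathrm{gra}A\subseteq\mathrm{gra}B$ with $x+\gamma u=y+\gamma v$. Applying the accretivity inequality of $B$ to $(x,u),(y,v)$ with this specific $\gamma$, the right-hand side $\|x-y+\gamma(u-v)\|$ vanishes by construction, so $\|x-y\|\le 0$, forcing $x=y$ and hence $u=v$; thus $(y,v)\in\mathrm{gra}A$. This shows that any accretive extension of $A$ must coincide with $A$, so $A$ has no proper accretive extension. (If $A$ itself fails to be accretive, the conclusion is vacuous, since any restriction of an accretive operator is accretive.)

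For part (2), the key tool will be the standard \emph{resolvent identity}: unfolding the defining equivalence $p\in J^A_\gamma x\Leftrightarrow \gamma^{-1}(x-p)\in Ap$, one verifies algebraically that for any $\gamma,\mu>0$ and $x,y\in X$,
\[
y\in J^A_\mu x \ \Longleftrightarrow\ y = J^A_\gamma\!\left(\tfrac{\gamma}{\mu}x+\bigl(1-\tfrac{\gamma}{\mu}\bigr)y\right).
\]
Suppose the range hypothesis holds at some $\gamma_0>0$; then Theorem~\ref{thm:accretiveEquivalence} guarantees that $J^A_{\gamma_0}$ is a single-valued nonexpansive map defined on all of $X$. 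For arbitrary $x\in X$ and $\mu>0$, the self-map $T_\mu(y):= J^A_{\gamma_0}(\frac{\gamma_0}{\mu}x+(1-\frac{\gamma_0}{\mu})y)$ of $X$ is then $|1-\gamma_0/\mu|$-Lipschitz. Whenever $\mu>\gamma_0/2$ this constant is strictly less than one, so Banach's fixed point theorem produces a fixed point which, by the identity above, exhibits an element of $J^A_\mu x$; thus $\mathrm{ran}(\mathrm{Id}+\mu A)=X$ for every such $\mu$.

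To promote this from the half-line $(\gamma_0/2,\infty)$ to all of $(0,\infty)$, I would iterate: having established totality at some $\gamma_n>0$, pick any $\gamma_{n+1}\in(\gamma_n/2,\gamma_n)$ and apply the previous step to extend totality to $(\gamma_{n+1}/2,\infty)$. Given an arbitrary target $\mu^*>0$, choosing $n$ so that $\gamma_0/2^{n+1}<\mu^*$ and iterating $n$ times produces a $\gamma_n$ with $\gamma_n/2<\mu^*$, at which stage $\mu^*$ is covered. The main obstacle I foresee is just getting the resolvent identity cleanly derived from the abstract equivalence and identifying the correct contractive range: one must see that $|1-\gamma_0/\mu|<1$ unpacks to $\mu>\gamma_0/2$ (not $\mu>\gamma_0$), so that each application of Banach strictly enlarges the admissible window of $\mu$ and the halving iteration reaches arbitrarily small values. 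Beyond this bookkeeping, everything is standard.
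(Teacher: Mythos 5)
Your part (1) is essentially the paper's own argument in different clothing: the paper takes a pair $(x,u)$ satisfying the accretivity inequality against all of $\mathrm{gra}A$, applies the range hypothesis to produce $J^A_\gamma(x+\gamma u)$, and uses the inequality with $\lambda=\gamma$ to force $x=J^A_\gamma(x+\gamma u)$ and hence $u\in Ax$; you run the identical computation with the roles of the two graph points swapped, starting from $(y,v)\in\mathrm{gra}B$ and pulling $(x,u)\in\mathrm{gra}A$ out of the range condition. Both reduce to the observation that $\norm{x-y+\gamma(u-v)}=0$ forces $x=y$ and $u=v$, so this part is fine.

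For part (2) the paper gives no proof at all (it defers to \cite{CP1969}), so you are supplying an argument rather than matching one. Your route --- the resolvent identity $y\in J^A_\mu x\Leftrightarrow y=J^A_{\gamma}(\tfrac{\gamma}{\mu}x+(1-\tfrac{\gamma}{\mu})y)$ (which is Proposition \ref{pro:resolventprop}(1) in the formal setting), contraction for $\lvert 1-\gamma_0/\mu\rvert<1$, i.e.\ $\mu>\gamma_0/2$, and then halving --- is the standard one and is correct, with two caveats. First, Banach's fixed point theorem needs $X$ complete; the surrounding text only says ``normed space,'' but the cited source works in Banach spaces and the result genuinely requires completeness here, so you should state that hypothesis. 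Second, the bookkeeping at the end is slightly off: if you only require $\gamma_{n+1}\in(\gamma_n/2,\gamma_n)$ then $\gamma_n>\gamma_0/2^n$, so ``choose $n$ with $\gamma_0/2^{n+1}<\mu^*$'' does not by itself give $\gamma_n/2<\mu^*$. Fix a concrete ratio, e.g.\ $\gamma_{n+1}=\tfrac{3}{4}\gamma_n$, so that $\gamma_n=(3/4)^n\gamma_0\to 0$ and every $\mu^*>0$ is eventually covered; with that adjustment the induction closes.
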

\begin{proof}
We only show the first item, the second can be shown as outlined in \cite{CP1969}: Let $x,u$ be such that
\[
\forall (y,v)\in\mathrm{gra}A,\lambda\geq 0(\norm{x-y+\lambda(u-v)}\geq\norm{x-y}).\tag{$\dagger$}
\]
We want to show $u\in Ax$. By totality of $J^A_{\gamma}$, we have that $J^A_{\gamma}(x+\gamma u)$ is well defined and thus $x+\gamma u=J^A_{\gamma}(x+\gamma u)+((x+\gamma u)-J^A_{\gamma}(x+\gamma u))$. Also by definition, we have
\[
\gamma^{-1}((x+\gamma u)-J^A_{\gamma}(x+\gamma u))\in A(J^A_{\gamma}(x+\gamma u)).
\]
and this combined with ($\dagger$) implies 
\begin{align*}
0&=\norm{x-J^A_{\gamma}(x+\gamma u)+\gamma(u-\gamma^{-1}((x+\gamma u)-J^A_{\gamma}(x+\gamma u)))}\\
&\geq\norm{x-J^A_{\gamma}(x+\gamma u)}.
\end{align*}
Thus $x=J^A_{\gamma}(x+\gamma u)$ and therefore $u=\gamma^{-1}((x+\gamma u)-J^A_{\gamma}(x+\gamma u))$. This gives $u\in Ax$ and $A$ therefore is maximally accretive.
\end{proof}
In light of Theorem \ref{thm:accretiveEquivalence}, we may conclude the following: 
\begin{enumerate}
\item[(1)] If there is a $\gamma>0$ with $J^A_\gamma$ single-valued, firmly nonexpansive and total, then $A$ is maximally accretive.
\item[(2)] If $J^A_\gamma$ is single-valued, nonexpansive and total for all $\gamma>0$, then $A$ is maximally accretive.
\end{enumerate}
In particular, m-accretivity implies maximally accretive. 

Also proofs for various directions of Theorems \ref{thm:accretiveEquivalence}, \ref{thm:monotoneEquivalence} and \ref{thm:comonotoneEquivalence} will be provided in the upcoming section in the contexts of formal systems for normed and inner product spaces with accretive, monotone or $\rho$-comonotone operators.

\section{Logical systems for operators and their resolvents}
In this section, we introduce the formal systems capturing operators and their resolvents.

\smallskip

To formally treat those types of operators in systems which allow for bound extraction results, we build on the usual formal setup for proof mining as developed\footnote{We again mention \cite{GuK2016,KN2017,Leu2006,Leu2014,Sip2019} for similar approaches to bound extraction theorems and \cite{FLP2019} for metatheorems in the context of the bounded functional interpretation.} in \cite{GeK2008,Koh2005}. In this setup, one extends logics of classical arithmetic and analysis of finite type by additional types and constants to handle abstract spaces and operations on them which are not representable in the sense of representations of Polish spaces in Baire space. 

\smallskip

To that end, the next sections provides a short overview of some of the underlying notions, with a particular focus on those that play a role in the derivation of our results.

\subsection{Systems for arithmetic of finite type and extensions}\label{sec:sysoffinittypeintro}
In this section, we introduce the logical systems needed for the extensions considered later in this paper. Concretely, we first introduce the `base system' $\mathcal{A}^{\omega}$ in Section \ref{fluiter} and discuss the formalization of the real numbers in $\mathcal{A}^{\omega}$ in Section \ref{hijis}. Extensions of $\mathcal{A}^{\omega}$ to $\mathcal{A}^\omega[X,\norm{\cdot}]$, involving abstract types $X$, are then considered in Section \ref{fliere}.

\smallskip

In Section \ref{labelz}, we then further extend $\mathcal{A}^\omega[X,\norm{\cdot}]$ to accommodate the previously discussed classes of operators and their resolvents. 
\subsubsection{The base system $\mathcal{A}^{\omega}$}\label{fluiter}
We introduce the `base system' $\mathcal{A}^{\omega}=\mathrm{WE}$-$\mathrm{PA}^\omega+\mathrm{QF}$-$\mathrm{AC}+\mathrm{DC}$ for (a fragment of) classical analysis over all finite types $T$, to be extended with extra axioms in Section \ref{fliere}. All systems considered here will be extensions of the basic system $\mathcal{A}^\omega$ as defined in \cite{GeK2008,Koh2005}. We only sketch the key features of $\mathcal{A}^\omega$ and its extensions and refer to \cite{Koh2008,Tro1973} for any further details.

\smallskip

The set of types $T$ is defined as follows:
\[
0\in T,\quad \rho,\tau\in T\Rightarrow \tau(\rho)\in T.
\]
These types are stratified by their \emph{degrees}, defined recursively via
\[
\mathrm{deg}(0):=0,\quad\mathrm{deg}(\tau(\rho)):=\max\{\mathrm{deg}(\tau),\mathrm{deg}(\rho)+1\}.
\]
We make similar conventions for dropping parentheses in types as made in \cite{Koh2008} (see page 47). We use a short notation using natural numbers for the pure types $P$, given by
\[
0\in P,\quad \rho\in P\Rightarrow 0(\rho)\in P,
\]
by recursively defining $0(n):=n+1$.

\smallskip

The language of $\mathrm{WE}$-$\mathrm{PA}^\omega$/$\mathcal{A}^\omega$ is a many-sorted language with $\land,\lor,\rightarrow$ as primitives and containing quantifiers and variables for all finite types, extended with constants $0$ for zero, $S$ for successor and particular constants $\Sigma_{\rho,\tau},\Pi_{\delta,\rho,\tau}$ for the so-called \emph{combinators} as considered already by Sch\"onfinkel \cite{Sch1924} and later used extensively by Curry and Howard (see \cite{How1980} for the latter).

Further, the language contains constants $\underline{R}_{\underline{\rho}}=(R_1)_{\underline{\rho}},\dots,(R_k)_{\underline{\rho}}$ for simultaneous primitive recursion (in the sense of G\"odel \cite{Goe1958}, see also \cite{Koh2008}) for tuples of types $\underline{\rho}$. The only relation symbol is $=_0$ for equality at type $0$ and the only prime formulas are consequently $s=_0t$ for $s,t$ terms of type $0$. New terms are formed from the constants and variables only via application: if $t$ is a term of type $\tau(\rho)$ and $s$ a term of type $\rho$, then $t(s)$ is a term of type $\tau$. Higher type equality is treated as a defined notion via
\[
s=_\rho t:=\forall y_1^{\rho_1},\dots,y_k^{\rho_k}(sy_1\dots y_k=_0ty_1\dots y_k)
\]
for terms $s,t$ of type $\rho=0\rho_k\dots\rho_1$.

\smallskip

The system $\mathrm{WE}$-$\mathrm{PA}^\omega$ extends the usual finite-type variant of Peano arithmetic (see \cite{Koh2008,Tro1973}) with only the following weak \emph{rule of quantifier-free extensionality}
\[
\mathrm{QF}\text{-}\mathrm{ER}:\quad\frac{A_0\rightarrow s=_\rho t}{A_0\rightarrow r[s/x^\rho]=_\tau r[t/x^\rho]}
\]
where $A_0$ is a quantifier-free \emph{formula}, $s,t$ are terms of type $\rho$, $r$ is a term of type $\tau$ and $r[s/x^\rho]$ denotes simultaneous substitution of $s$ for all occurrences of $x$ in $r$. Later, we in particular rely on the following remark.

\begin{remark}\label{rem:strongExtRule}
One can actually derive
\[
\Sigma_1\text{-}\mathrm{ER}:\quad\frac{\exists y^\sigma A_0(y)\rightarrow s=_\rho t}{\exists y^\sigma A_0(y)\rightarrow r[s/x^\rho]=_\tau r[t/x^\rho]}
\]
with $A_0$, $s$, $t$ and $\rho,\tau$ as before and $\sigma$ an additional finite type but where we assume that $y$ is not free in $r$, $s$, $t$. This can be easily seen by noting that 
\[
\exists y^\sigma A_0(y)\rightarrow s=_\rho t\equiv\forall y^\sigma(A_0(y)\rightarrow s=_\rho t)
\]
and the latter implies $A_0(y)\rightarrow s=_\rho t$. Now, using $\mathrm{QF}$-$\mathrm{ER}$ applied to this (where it is important that $A_0$ in the formulation may have free variables), we get $A_0(y)\rightarrow r[s/x^\rho]=_\tau r[t/x^\rho]$ and universal generalization yields
\[
\forall y^\sigma(A_0(y)\rightarrow r[s/x^\rho]=_\tau r[t/x^\rho])\equiv\exists y^\sigma A_0(y)\rightarrow r[s/x^\rho]=_\tau r[t/x^\rho],
\]
which is as required. 
\end{remark}
Of course, we may actually have tuples instead of a single $y$ in (both variants of) the rule.

\smallskip

As is well-known, one can internally define $\lambda$-abstractions via the combinators in the sense that for any term $t$ of type $\tau$ and any variable $x^\rho$ of type $\rho$, there is a term $\lambda x^\rho.t$ of type $\tau(\rho)$ such that provably
\[
(\lambda x^\rho.t)(s^\rho)=_\tau t[s/x].
\]

\smallskip

The system $\mathcal{A}^\omega$ now extends $\mathrm{WE}$-$\mathrm{PA}^\omega$ by the quantifier-free version of the axiom of choice in finite types
\[
\mathrm{QF}\text{-}\mathrm{AC}:\quad\forall\underline{x}\exists\underline{y} A_0(\underline{x},\underline{y})\rightarrow\exists\underline{Y}\forall\underline{x} A_0(\underline{x},\underline{Y}\underline{x})
\]
where $A_0$ is quantifier free and the $\underline{x},\underline{y}$ may be of arbitrary type and the axiom of dependent choice $\mathrm{DC}=\{\mathrm{DC}^{\underline{\rho}}\mid\underline{\rho}\in T\}$ where
\[
\mathrm{DC}^{\underline{\rho}}:\quad\forall x^0,\underline{y}^{\underline{\rho}}\exists\underline{z}^{\underline{\rho}} A(x,\underline{y},\underline{z})\rightarrow\exists\underline{f}^{\underline{\rho}(0)}\forall x^0 A(x,\underline{f}(x),\underline{f}(S(x)))
\]
where $A$ is now of arbitrary complexity.

\subsubsection{Real numbers and related results in $\mathcal{A}^{\omega}$}\label{hijis}

We now discuss how real numbers are represented in $\mathcal{A}^{\omega}$ and discuss some essential properties. First of all, rationals and reals are represented as usual as objects of type $0$ and $1$, respectively. In that way, we follow the definitions and conventions given in \cite{Koh2008} and only present those points crucial to the development of the new metatheorems later on (together with some basic but important facts). For the coding of rationals as pairs of natural numbers, it will be convenient to fix a pairing function which we do (following the conventions of \cite{Koh2008}) by setting
\[
j(n^0,m^0):=\begin{cases}\min u\leq_0(n+m)^2+3n+m[2u=_0(n+m)^2+3n+m]&\text{if existent},\\0^0&\text{otherwise}.\end{cases}
\]
Using those codes, the operations $+_\mathbb{Q},\cdot_\mathbb{Q},(\cdot)^{-1}_\mathbb{Q}$ are primitive recursively definable and there exist quantifier-free formulas $=_\mathbb{Q}$, $<_\mathbb{Q}$ defining the respective relations.

\smallskip

Secondly, on the level of the representation of reals by fast converging Cauchy sequences with a fixed modulus $2^{-n}$ (see \cite{Koh2008}), one can then similarly define formulas $=_\mathbb{R}$/$<_\mathbb{R}$ on type $1$ which define the corresponding relations of the real numbers represented by the inputs. These relations, however, are not decidable anymore but are $\Pi^0_1$/$\Sigma^0_1$-formulas, respectively.

One also easily defines closed terms $+_\mathbb{R},\cdot_\mathbb{R},\vert\cdot\vert_\mathbb{R}$ representing the usual operations of real arithmetic on these type $1$ objects. Any natural $n$ or rational $q$ can be easily seen as a real defined just via the constant $n$- or $q$-sequence and we write $n$ or $q$, respectively, for that type $1$ representation as well.

However, the definition of the reciprocal $(\cdot)^{-1}$ in the reals is of a more delicate matter and as this features prominently in the theory later developed, we want to give a little more detail in that case. In fact, there is no closed term of type $1(1)$ in $\mathrm{WE}$-$\mathrm{PA}^\omega$ which represents $\gamma^{-1}$ correctly for all $\gamma\neq 0$. Following \cite{Koh1996}, we handle this by using a binary term $(\cdot)^{-1}_\cdot$ of type $1(1)(0)$ such that $(\gamma)^{-1}_l$ correctly represents $\gamma^{-1}$ for all $\vert\gamma\vert>2^{-l}$. An expression like $\gamma^{-1}$ is then dealt with by assuming an additional parameter $l$ of type $0$ and using $(\gamma)^{-1}_l$ together with the additional implicative assumption $\vert\gamma\vert_\mathbb{R}>_\mathbb{R}2^{-l}$. In practice, this can be mostly ignored and we thus mainly use $\gamma^{-1}$ freely without the additional parameter. However, we discuss some important practical implications of these problems with the reciprocal in Remark \ref{rem:division} later on.

Lastly, note that extensionality of the operations $+_\mathbb{R}$, $\cdot_\mathbb{R}$ and $\vert\cdot\vert_\mathbb{R}$ w.r.t. $=_\mathbb{R}$ can be proved in (weak fragments of) $\mathrm{WE}$-$\mathrm{PA}^\omega$ and in the case of $(\cdot)^{-1}$, extensionality can be shown for all $l$ and $f,g$ such that $\vert f\vert_\mathbb{R},\vert g\vert_\mathbb{R}>_\mathbb{R}2^{-l}$ (see \cite{Koh1996}). 

These type 1 representations can be similarly carried out for other Polish and, in particular, compact spaces (see \cite{Koh2008}).

In the context of the bound extraction theorems established later, we associate a canonical type 1 representation $(r)_\circ$ with a non-negative real $r\in [0,\infty)$ as introduced in \cite{Koh2005} (see also \cite{GeK2008,Koh2008})\footnote{Such an association will in general be non-effective but will behave nice enough with majorization which serves all intends and purposes.}.
\begin{definition}[\cite{Koh2005}]
For $r\in [0,\infty)$, define $(r)_\circ\in\mathbb{N}^\mathbb{N}$ via
\[
(r)_\circ(n):=j(2k_0,2^{n+1}-1),
\]
where
\[
k_0:=\max k\left[\frac{k}{2^{n+1}}\leq r\right].
\]
\end{definition}
We also cite some properties of $(\cdot)_\circ$ which will be of use later.
\begin{lemma}[\cite{Koh2005}]
\begin{enumerate}
\item[(1)] $(r)_\circ$ is a representation of $r\in [0,\infty)$ in the sense of the above.
\item[(2)] For $r,s\in [0,\infty)$, if $r\leq s$, then $(r)_\circ\leq_\mathbb{R}(s)_\circ$ and also $(r)_\circ\leq_1 (s)_\circ$.
\item[(3)] If $r\in [0,\infty)$, then $(r)_\circ$ is nondecreasing (as a type 1 function).
\end{enumerate} 
\end{lemma}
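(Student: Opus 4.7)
The plan is to unpack the definition of $(r)_\circ$ and verify each clause by a direct computation, leveraging the closed form of the pairing function $j$. Under the coding convention fixed in the excerpt, $j(2k,m)$ encodes the nonnegative rational $k/(m+1)$, so $(r)_\circ(n)$ is simply a code for the dyadic approximation $k_0/2^{n+1}$, where $k_0=\max\{k\in\mathbb{N}:k\cdot 2^{-(n+1)}\leq r\}=\lfloor r\cdot 2^{n+1}\rfloor$. All three parts will follow from the one-sided bound $0\leq r-k_0/2^{n+1}<2^{-(n+1)}$ together with elementary monotonicity properties of $j$ in its two arguments.

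For (1), I would first show that $(r)_\circ$ is a fast-converging Cauchy sequence with modulus $2^{-n}$: from $\vert k_0/2^{n+1}-r\vert\leq 2^{-(n+1)}$ and the analogous bound at level $m$, the triangle inequality in $\mathbb{Q}$ yields $\vert(r)_\circ(n)-(r)_\circ(m)\vert_\mathbb{Q}\leq 2^{-n}$ for $m\geq n$, which is exactly the modulus required by the representation scheme from \cite{Koh2008}, and the limit is $r$ by the same bound. For (2), observe that $r\leq s$ forces $k_0^r\leq k_0^s$ at every precision $n$, hence $k_0^r/2^{n+1}\leq k_0^s/2^{n+1}$ pointwise, which already gives $(r)_\circ\leq_\mathbb{R}(s)_\circ$. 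To upgrade this to $(r)_\circ\leq_1(s)_\circ$ at the level of codes, I would use the closed form $j(a,m)=((a+m)^2+3a+m)/2$, valid on the inputs of interest and strictly increasing in $a$ for fixed $m$. For (3), a similar argument combines $2k_0(n)/2^{n+2}=k_0(n)/2^{n+1}\leq r$, giving $k_0(n+1)\geq 2k_0(n)$, with the monotonicity of $j$ in both arguments to conclude $(r)_\circ(n)\leq_0(r)_\circ(n+1)$.

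The main obstacle, such as it is, amounts to bookkeeping at the code level: one has to verify that the minimization defining $j$ actually returns a genuine $u$ on inputs of the form $(2k_0,2^{n+1}-1)$ so that the closed formula for $j$ can be used (an easy parity check, since $(2k_0+2^{n+1}-1)^2+6k_0+2^{n+1}-1$ is visibly even), and then ensure that the analytic inequalities on the dyadic approximations transfer to pointwise comparisons of natural-number codes via the monotonicity of $j$. Beyond that everything is routine arithmetic, and no appeal to completeness or any nontrivial arithmetic principle is needed.
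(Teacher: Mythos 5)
The paper does not prove this lemma; it is quoted verbatim from \cite{Koh2005}, so there is no in-paper argument to compare against. Your direct verification from the definition is correct and is essentially the standard one: everything reduces to the one-sided estimate $0\leq r-k_0(n)/2^{n+1}<2^{-(n+1)}$ together with the monotonicity of $j(a,m)=((a+m)^2+3a+m)/2$ in each argument, and your parity check confirming that the minimization in the definition of $j$ always succeeds is indeed the only code-level point that needs saying. One small refinement for item (1): instead of the two-sided triangle inequality (which only yields $\vert q_n-q_{n+1}\vert_\mathbb{Q}\leq 3\cdot 2^{-(n+2)}$ for consecutive terms), use that $q_n$ and $q_{n+1}$ both lie in the half-open interval $(r-2^{-(n+1)},r]$; this gives the strict bound $\vert q_n-q_{n+1}\vert_\mathbb{Q}<2^{-n-1}$, which is the consecutive-term condition that the representation convention of \cite{Koh2008} officially requires, and from which your bound $\vert q_n-q_m\vert_\mathbb{Q}\leq 2^{-n}$ for $m\geq n$ then follows.
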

Further, we write $r_\alpha$ for the real represented by some type 1 functional $\alpha$. In the following, we omit the subscripts of the arithmetical operations for $\mathbb{R}$ and also for $X$ in the case of $\cdot_X$ (or even $\cdot_X$ altogether) to avoid notational overload. Also, we will omit types of variables whenever convenient and omit types in proofs almost always to make everything more readable.

\subsubsection{The base systems and abstract types}\label{fliere}

In this section, we extend $\mathcal{A}^{\omega}$ to $\mathcal{A}^\omega[X,\norm{\cdot}]$ using a new abstract type $X$ (following \cite{GeK2008,Koh2005}). The latter allows us to deal with abstract spaces that cannot necessarily be represented in $\mathcal{A}^\omega$/$\mathrm{WE}$-$\mathrm{PA}^\omega$. Define the extended set of types $T^X$ as follows:
\[
0,X\in T^X,\quad \rho,\tau\in T^X\Rightarrow \tau(\rho)\in T^X.
\]
The theory $\mathcal{A}^\omega$ can then be formulated over the resulting extended language by extending the constants (if appropriate) to take arguments and produce values in those new types and by trivially extending the axiom schemes and rules to allow formulas from the new language (see \cite{Koh2008} for details on all of this). 

Our new type can be used with additional constants and axioms to represent a wide range of spaces and operations on them, resulting in respective theories extending $\mathcal{A}^\omega$ (formulated over $T^X$) and a detailed discussion of various examples of such extensions can be found in \cite{GeK2008,Koh2005,Koh2008}.

The main extension used here will be the theory {$\mathcal{A}^\omega[X,\norm{\cdot}]$\label{th:BaseANorm}} for real normed vector spaces, obtained by extending $\mathcal{A}^\omega$ (formulated over $T^X$) by new constants $0_X,1_X$ of type $X$, $+_X$ of type $X(X)(X)$, $-_X$ of type $X(X)$, $\cdot_X$ of type $X(X)(1)$ and $\norm{\cdot}_X$ of type $1(X)$ together with the relevant defining axioms stating that $X$ with the operations is a real normed vector space with $1_X$ such that $\norm{1_X}_X=_\mathbb{R}1$ and $-_X x$ being the additive inverse of $x$ (see \cite{GeK2008,Koh2005,Koh2008}). Further, extensionality of all those operations is provable in $\mathcal{A}^\omega[X,\norm{\cdot}]$. It should be noted that $=_0$ is still the only primitive relation and in particular, identity on $X$ is treated as a defined predicate via
\[
x^X=_Xy^X:=\norm{x-_Xy}_X=_\mathbb{R}0
\]
which is, by the previous discussion on the representation of the reals, a $\Pi^0_1$-formula and not decidable.

Derived from $\mathcal{A}^\omega[X,\norm{\cdot}]$ is the theory {$\mathcal{A}^\omega[X,\langle\cdot,\cdot\rangle]$\label{th:BaseAInnProd}} for real inner product spaces, extending the former by the parallelogram law
\[
\forall x^X,y^X\left(\norm{x+_Xy}^2_X+\norm{x-_Xy}^2_X=_\mathbb{R}2\left(\norm{x}^2_X+\norm{y}^2_X\right)\right).
\]
As is well-known, any inner product space satisfies this law and conversely, any normed space satisfying it actually admits an inner product which can then be defined via the norm with
\[
\langle x^X,y^X\rangle_X:=_1\frac{1}{4}\left(\norm{x+_Xy}^2_X-\norm{x-_Xy}^2_X\right).
\]
Also here, extensionality of the defined operation is provable in the system.

Following \cite{GeK2008,Koh2005} (see also \cite{Koh2008}), we introduce some special notation for denoting specific classes of types from $T^X$. We call a type $\rho$ \emph{of degree $n$} if $\rho\in T$ and it has degree $\leq n$ in the usual sense. Further we call $\rho$ \emph{small} if it is of the form $\rho=\rho_0(0)\dots(0)$ (including $0,X$) for $\rho_0\in\{0,X\}$ and call it \emph{admissible} if it is of the form $\rho=\rho_0(\tau_k)\dots(\tau_1)$ (including $0,X$) where each $\tau_i$ is small and $\rho_0\in\{0,X\}$ as before.

Further, we define certain subclasses of existential/universal formulas satisfying certain type restrictions: A formula $A$ is called a \emph{$\forall$-formula} if $A=\forall\underline{a}^{\underline{\rho}}A_{qf}(\underline{a})$ with $A_{qf}$ quantifier-free and all types $\rho_i$ in $\underline{\rho}=(\rho_1,\dots,\rho_k)$ are admissible. A formula $A$ is called an \emph{$\exists$-formula} if $A=\exists\underline{a}^{\underline{\rho}}A_{qf}(\underline{a})$ with similar $\underline{\rho}$.

\subsection{Treating operators via total resolvents}\label{labelz}

In this section, we extend the system $\mathcal{A}^\omega[X,\norm{\cdot}]$ from Section \ref{fliere} by constants and axioms with the following essential properties:
\begin{itemize}
\item We can formalize theorems and proofs involving abstract accretive and (generalized) monotone set-valued operators and their (total) resolvents.  
\item We obtain bound extraction results in the sense of the usual metatheorems of proof mining. 
\end{itemize}
The second item essentially amounts to whether the new constants can be majorized in a suitable sense by functionals of finite type (see \cite{GeK2008,Koh2005}) and whether the corresponding axioms have a monotone functional interpretation (see \cite{Koh1996}). The latter in particular is guaranteed for purely universal axioms.

\smallskip

We divide the corresponding presentation on whether the resolvents are assumed to be total or partial. Based on the correspondence results discussed in the previous chapter, totality of the resolvent is tied to maximality conditions of the operators and we will in particular be able to treat maximal (generalized) monotone operators by treating operators with total resolvents (see in particular Remark \ref{rem:maxremark} for a comparison with treating maximality in an ad hoc way). The following section now presents the underlying reasons for the particular choice of constants and axioms made later.

\subsubsection{Formal systems for total resolvents}

At first, to model a set-valued operator $A:X\to 2^X$ via functionals of finite type (in the sense of $T^X$), we add a constant $\chi_A$ of type $0(X)(X)$ to the language of normed spaces which represents $A$ using a function taking an argument $x$ from $X$ and returning a characteristic function for $Ax$\footnote{This is conceptually similar to the representation of a designated convex set $C$ in the systems $\mathcal{A}^\omega[X,\norm{\cdot},C]$ / $\mathcal{A}^\omega[X,\norm{\cdot},C]_{-b}$ from \cite{GeK2008,Koh2005}.}. In that vein, we write $y\in Ax$ for $\chi_Axy=_00$. 

For the resolvent, recall the definition given before:
\[
J^A_\gamma :=(Id+\gamma A)^{-1}.
\]
In that way, $J^A_\gamma$ is also a set-valued operator. However, as the previously discussed correspondence results between $A$ and $J^A_\gamma$ show, accretivity or (generalized) monotonicity of $A$ imply that $J^A_\gamma x$ is actually a singleton for any $x$ and is thus usually identified with a proper (potentially partial) function $X\to X$. Therefore, we can add a new constant $J^{\chi_A}$ of type $X(X)(1)$, taking both the parameter $\gamma$ and the argument of $J^A_\gamma$ and outputting the \emph{unique} return-value which exists for any input as forced by the type. Motivated by this, we write $J^A_\gamma$ for $J^{\chi_A}\gamma$ (which is an object of type $X(X)$). Any semantic interpretation of this language will thus have to interpret $J^{\chi_A}$ by some total function which, together with a suitable resolvent axiom expressing $J^A_\gamma :=(Id+\gamma A)^{-1}$, forces $A$ to be semantically interpreted by a maximal operator\footnote{This does not mean that maximality is provable in the to be defined systems as will be extensively discussed later on.}.

Now, for the right axiom expressing $J^A_\gamma=(Id+\gamma A)^{-1}$, consider this defining equality in the following formulation for $J^A_\gamma$ as a set-valued mapping:
\[
\forall \gamma^1,p^X,x^X\left(\gamma>_\mathbb{R}0_\mathbb{R}\rightarrow (p\in J^A_{\gamma} x\leftrightarrow\gamma^{-1}(x-_Xp)\in Ap)\right).\tag{$\dagger$}
\]
A natural step may be to replace $p\in J^A_\gamma x$ by $p=_X J^A_\gamma x$, under which the above ($\dagger$) transforms into
\[
\forall \gamma^1,p^X,x^X\left(\gamma>_\mathbb{R}0\rightarrow (p=_X J^A_{\gamma} x\leftrightarrow\gamma^{-1}(x-_Xp)\in Ap)\right)
\]
in the language of the constants $\chi_A$ and $J^{\chi_A}$. This, by the hidden universal quantifier in $p=_X J^A_\gamma x$, is of course not universal. By separating the two directions of the biimplication into
\[
\begin{cases}
\forall \gamma^1,p^X,x^X\left(\gamma>_\mathbb{R}0\land \gamma^{-1}(x-_Xp)\in Ap\rightarrow p=_X J^A_{\gamma} x\right),\\
\forall \gamma^1,p^X,x^X\left(\gamma>_\mathbb{R}0\land p=_X J^A_{\gamma}x\rightarrow\gamma^{-1}(x-_Xp)\in Ap)\right),
\end{cases}
\]
we see that the first one is unproblematic as it is universal but we can even ignore it for now as it will turn out to be provable in the systems later defined. For the latter, we remove the universal premise by weakening the statement to its intensional version:
\[
\forall \gamma^1,x^X\left(\gamma>_\mathbb{R}0\rightarrow \gamma^{-1}(x-_XJ^A_{\gamma}x)\in A(J^A_{\gamma}x)\right).
\]
This is in particular universal and will be right axiom to treat resolvents of set-valued operators. As we will later see, the strong version of the above resolvent axiom, i.e. 
\[
\forall\gamma^1,x^X,p^X\left(\gamma>_\mathbb{R}0\land p=_X J^A_{\gamma}x\rightarrow\gamma^{-1}(x-_Xp)\in Ap\right)
\]
turns out to have a strong connection to the maximality and extensionality statement for $A$.
\begin{remark}\label{rem:maxremark}
Instead of using the detour of total resolvents, one might consider expressing maximality of an operator more directly. Consider a monotone operator (where one has, compared to $m$-accretive operators, a true maximality statement). In the language of normed spaces extended with $\chi_A$, the monotonicity of $A$ can be swiftly expressed by the universal sentence 
\[
\forall x^X,y^X,u^X,v^X\left(u\in Ax\land v\in Ay\rightarrow\langle x-_Xy,u-_Xv\rangle_X\geq_\mathbb{R} 0\right).
\]
but formalizing maximal monotonicity naturally leads to the axiom
\[
\forall x^X,u^X\left(\forall y^X,v^X\left(v\in Ay\rightarrow\langle x-_Xy,u-_Xv\rangle_X\geq_\mathbb{R}0\right)\rightarrow u\in Ax\right)
\]
which is problematic for establishing such a bound-extraction result as it does not have a monotone functional interpretation.

Indeed, we will later formally see that this can not be avoided and that this route of treating maximal operators via total resolvents is in some sense optimal if one does not assume some uniform notion of continuity of $A$ since the maximality statement will turn out to be provably equivalent to extensionality of the operator $A$ which has to be unprovable in systems which deal with non-continuous operators and allow for bound extractions.
\end{remark}
With this motivation, we now turn to the precise definitions and the discussion of the systems treating the various kinds of operators considered before. Note for this, that the upcoming formal systems are concerned with only one space $X$ and one set-valued operator $A:X\to 2^X$. For extensions of this, in various directions, see the short discussion in Remark \ref{rem:extensionsMeta}.

\subsubsection{Logical systems for m-accretive operators}
We first introduce logical systems that can accommodate m-accretive operators and then, as a `litmus test', derive the basic properties of the latter in the former.

To define the system for $m$-accretive operators, or accretive operators with total resolvents, we add the constant $\chi_A$ for the set-valued operator and $J^{\chi_A}$ for the resolvent as before and, besides those, we add three further constants to the language of $\mathcal{A}^\omega[X,\norm{\cdot}]$, namely a constant $\tilde{\gamma}$ of type $1$, a constant $m_{\tilde{\gamma}}$ of type $0$ and a constant $c_X$ of type $X$. These are used for majorization of the resolvent constant $J^{\chi_A}$ later on in the sense that a bound for $\norm{x-J^A_\gamma x}$ for \emph{some} $x$ and \emph{some} $\gamma>0$ will suffice for constructing a majorant of $J^{\chi_A}$. In that way, $c_X$ designates such an arbitrary point and $\tilde{\gamma}$ such an arbitrary index where we use $m_{\tilde{\gamma}}$ to provide a quantitative version of $\tilde{\gamma}>0$ through stipulating $\tilde{\gamma}\geq_\mathbb{R}2^{-m_{\tilde{\gamma}}}$ in the following axioms.

The theory {$\mathcal{A}^\omega[X,\norm{\cdot},A,J^A]$\label{th:TotAcc}} is now officially defined as the extension of the theory $\mathcal{A}^\omega[X,\norm{\cdot}]$ with the above constants and corresponding axioms
\begin{enumerate}
\item[(I)] $\forall x^X,y^X(\chi_A xy\leq_01)$,
\item[(II)] $\forall \gamma^1,x^X\left(\gamma>_\mathbb{R}0\rightarrow \gamma^{-1}(x-_XJ^A_{\gamma}x)\in A(J^A_{\gamma}x)\right)$,
\item[(III)] $\begin{cases}\forall x^X,y^X,u^X,v^X,\lambda^1\big(u\in Ax\land v\in Ay\\
\qquad\qquad\qquad\rightarrow \norm{x-_Xy+_X\vert\lambda\vert(u-_Xv)}_X\geq_\mathbb{R}\norm{x-_Xy}_X\big),\end{cases}$
\item[(IV)] $\tilde{\gamma}\geq_\mathbb{R}2^{-m_{\tilde{\gamma}}}$.
\end{enumerate}
We use $\vert\lambda\vert$ to avoid the universal premise $\lambda\geq_\mathbb{R}0$ in axiom (III). Note that the behavior of $J^A_\gamma$ for $\gamma\leq_\mathbb{R}0$ is left undefined. 

The system $\mathcal{A}^\omega[X,\norm{\cdot},A,J^A]$ is strong enough to formalize large parts of the theory of $m$-accretive operators and we see examples of some essential theorems on the operator and resolvent that $\mathcal{A}^\omega[X,\norm{\cdot},A,J^A]$ proves in the following proposition (formalizing parts of Theorem \ref{thm:accretiveEquivalence}) as a first indication of that. We gives sketches of the formal proofs as they are quite instructive regarding the use of the basic axioms of the above system. For that, we write $\mathcal{V}^\omega$ as an abbreviation for $\mathcal{A}^\omega[X,\norm{\cdot},A,J^A]$.
\begin{proposition}\label{pro:respropaccr}
$\mathcal{V}^\omega$ proves:
\begin{enumerate}
\item[(1)] $J^A_\gamma$ is unique for any $\gamma>0$, i.e. 
\[
\forall \gamma^1,p^X,x^X\left(\gamma>_\mathbb{R}0\land \gamma^{-1}(x-_Xp)\in Ap\rightarrow p=_X J^A_{\gamma} x\right).
\]
\item[(2)] $J^A_\gamma$ is firmly nonexpansive for any $\gamma>0$, i.e.
\begin{align*}
&\forall\gamma^1,r^1,x^X,y^X\Big(\gamma>_\mathbb{R}0\land r>_\mathbb{R}0\rightarrow\norm{J^A_\gamma x-_XJ^A_\gamma y}_X\\
&\qquad\qquad\qquad\qquad\leq_\mathbb{R}\norm{r(x-_Xy)+(1-r)(J^A_\gamma x-_XJ^A_\gamma y)}_X\Big).
\end{align*}
\item[(3)] $J^A_\gamma$ is nonexpansive for any $\gamma>0$, i.e.
\[
\forall\gamma^1,x^X,y^X\left(\gamma>_\mathbb{R}0\rightarrow \norm{x-_Xy}_X\geq_\mathbb{R}\norm{J^A_\gamma x-_XJ^A_\gamma y}_X\right).
\]
\item[(4)] $J^{\chi_A}$ is extensional in both arguments:
\[
\forall \gamma^1>_\mathbb{R}0,{\gamma'}^1>_\mathbb{R}0,x^X,{x'}^X\left(x=_Xx'\land\gamma=_\mathbb{R}\gamma'\rightarrow J^A_\gamma x=_XJ^A_{\gamma'}x'\right).
\]
\end{enumerate}
\end{proposition}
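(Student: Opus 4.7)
The unifying idea is that all four statements reduce to careful applications of the accretivity axiom (III) after obtaining suitable graph memberships via the resolvent axiom (II); extensionality of $\chi_A$ itself is never needed, only extensionality of the real arithmetic and of $-_X,+_X,\cdot_X,\norm{\cdot}_X$, which is provable in $\mathcal{A}^\omega[X,\norm{\cdot}]$.

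For (1), assume $\gamma>_\mathbb{R}0$ and $\gamma^{-1}(x-_Xp)\in Ap$, and take the companion membership $\gamma^{-1}(x-_XJ^A_\gamma x)\in A(J^A_\gamma x)$ from axiom (II). Apply axiom (III) to the two pairs with $\lambda=\gamma$. The expression $p-_XJ^A_\gamma x +\gamma(\gamma^{-1}(x-_Xp)-\gamma^{-1}(x-_XJ^A_\gamma x))$ telescopes to $0_X$, so $\norm{p-_XJ^A_\gamma x}_X\leq_\mathbb{R}0$, i.e.\ $p=_XJ^A_\gamma x$. For (2), apply axiom (III) to the two axiom-(II) pairs $(J^A_\gamma x,\gamma^{-1}(x-_XJ^A_\gamma x))$ and $(J^A_\gamma y,\gamma^{-1}(y-_XJ^A_\gamma y))$ with $\lambda=r\gamma$ (so $\vert\lambda\vert=r\gamma$ since $r,\gamma>_\mathbb{R}0$); a direct rearrangement turns $J^A_\gamma x-_XJ^A_\gamma y+r\gamma(\gamma^{-1}(x-_XJ^A_\gamma x)-\gamma^{-1}(y-_XJ^A_\gamma y))$ into $r(x-_Xy)+(1-r)(J^A_\gamma x-_XJ^A_\gamma y)$, which yields the desired inequality. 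Part (3) is the instance $r=1$ of (2).

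For (4), first use (3) applied to $x,x'$ to get $\norm{J^A_\gamma x-_XJ^A_\gamma x'}_X\leq_\mathbb{R}\norm{x-_Xx'}_X=_\mathbb{R}0$, so $J^A_\gamma x=_XJ^A_\gamma x'$. It remains to show $J^A_\gamma x'=_XJ^A_{\gamma'}x'$ when $\gamma=_\mathbb{R}\gamma'$ and both are positive. I again apply axiom (III), now to the axiom-(II) pairs $(J^A_\gamma x',\gamma^{-1}(x'-_XJ^A_\gamma x'))$ and $(J^A_{\gamma'}x',(\gamma')^{-1}(x'-_XJ^A_{\gamma'}x'))$ with $\lambda=\gamma$. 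Since $\gamma=_\mathbb{R}\gamma'>_\mathbb{R}0$, extensionality of multiplication together with the conditional extensionality of $(\cdot)^{-1}$ on reals bounded away from $0$ (as recalled in Section \ref{hijis}) gives $\gamma\cdot(\gamma')^{-1}=_\mathbb{R}1$, so the expression $J^A_\gamma x'-_XJ^A_{\gamma'}x'+\gamma(\gamma^{-1}(x'-_XJ^A_\gamma x')-(\gamma')^{-1}(x'-_XJ^A_{\gamma'}x'))$ reduces to $0_X$, forcing $J^A_\gamma x'=_XJ^A_{\gamma'}x'$. Composing the two equalities yields (4).

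The main obstacle is (4): since $\chi_A$ is not assumed extensional, we cannot simply transport the membership $\gamma^{-1}(x'-_XJ^A_\gamma x')\in A(J^A_\gamma x')$ to an equivalent membership involving $\gamma'$. The detour through axiom (III) bypasses this altogether, which is exactly the point foreshadowed in Remark \ref{rem:maxremark}: the resolvent axiom together with accretivity already encodes enough uniform information to prove extensionality of $J^{\chi_A}$ even though $\chi_A$ itself may fail to be extensional. The only further technicality is making sure every division involved stays on the ``safe'' side of the reciprocal's conditional extensionality, which is automatic under the hypothesis $\gamma,\gamma'>_\mathbb{R}0$.
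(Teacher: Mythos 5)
Your proposal is correct and follows essentially the same route as the paper: each item is obtained by feeding the axiom-(II) graph memberships into the accretivity axiom (III) with a suitably chosen $\lambda$ (the paper phrases (2) for general $s>0$ before setting $s=r\gamma$, but that is the same computation), and (4) is handled exactly as in the paper by combining nonexpansivity in $x$ with the telescoping accretivity argument for $\gamma$ versus $\gamma'$, never invoking extensionality of $\chi_A$.
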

\begin{proof}
\begin{enumerate}
\item[(1)] Suppose that $\gamma>0$ and that $\gamma^{-1}(x-p)\in Ap$. Axiom (II) gives $\gamma^{-1}(x- J^A_\gamma x)\in A(J^A_\gamma x)$. Axiom (III) then implies that
\[
0=\norm{p-J^A_\gamma x+\vert\gamma\vert(\gamma^{-1}(x-p)-\gamma^{-1}(x-J^A_\gamma x))}\geq\norm{p-J^A_\gamma x}.
\]
Thus $p=_XJ^A_\gamma x$ since provably $\gamma=\vert\gamma\vert$ assuming $\gamma>0$.
\item[(2)] Let $\gamma>0$. Axiom (II) gives 
\[
\gamma^{-1}(x-J^A_\gamma x)\in A(J^A_\gamma x)\text{ and }\gamma^{-1}(y-J^A_\gamma y)\in A(J^A_\gamma y).
\]
Axiom (III) gives
\begin{align*}
\norm{J^A_\gamma x-J^A_\gamma y}&\leq\norm{(J^A_\gamma x-J^A_\gamma y)+s(\gamma^{-1}(x-J^A_\gamma x) -\gamma^{-1}(y-J^A_\gamma y))}\\
&=\norm{(J^A_\gamma x-J^A_\gamma y)+s\gamma^{-1}((x-y)-(J^A_\gamma x-J^A_\gamma y))}\\
&=\norm{s\gamma^{-1}(x-y)+(1-s\gamma^{-1})(J^A_\gamma x-J^A_\gamma y)}
\end{align*}
for any $s>0$ using extensionality of the norm. By considering $s=r\gamma$, we get
\[
\norm{J^A_\gamma x-J^A_\gamma y}\leq\norm{r(x-y)+(1-r)(J^A_\gamma x-J^A_\gamma y)}
\]
for any $r>0$.
\item[(3)] Let $\gamma> 0$. By the previous item (2), we have for $r=1$:
\[
\forall\gamma^1,x^X,y^X\left(\gamma> 0\rightarrow\norm{J^A_\gamma x-J^A_\gamma y}\leq\norm{1(x-y)+(1-1)(J^A_\gamma x-J^A_\gamma y)}\right).
\]
We get nonexpansivity using basic arithmetic and extensionality of the norm in $\mathcal{A}^\omega[X,\norm{\cdot}]$.
\item[(4)] We get that $x=_Xx'$ implies $\norm{x-x'}=0$, i.e. $\norm{J^A_\gamma x-J^A_\gamma x'}=0$ by nonexpansivity. Thus $J^A_\gamma$ is extensional in $x$ for any $\gamma>0$. For extensionality in $\gamma$, let $\gamma=\gamma'$ be given with $\gamma,\gamma'>0$. Then $\gamma^{-1}=\gamma'^{-1}$. By axiom (II), we get that $\gamma^{-1}(x-J^A_{\gamma}x)\in A(J^A_{\gamma}x)$ and $\gamma'^{-1}(x-J^A_{\gamma'}x)\in A(J^A_{\gamma'}x)$. By axiom (III), we get 
\[
\norm{J^A_\gamma x-J^A_{\gamma'}x+\vert\gamma\vert(\gamma^{-1}(x-J^A_\gamma x)-\gamma'^{-1}(x-J^A_{\gamma'}x))}\geq\norm{J^A_\gamma x-J^A_{\gamma'}x}
\]
By extensionality of $\norm{\cdot}$ and the arithmetic operations in $X$, we get
\begin{align*}
0&=\norm{J^A_\gamma x-J^A_{\gamma'}x+((x-J^A_\gamma x)-(x-J^A_{\gamma'}x))}\\
&=\norm{J^A_\gamma x-J^A_{\gamma'}x+\vert\gamma\vert(\gamma^{-1}(x-J^A_\gamma x)-\gamma'^{-1}(x-J^A_{\gamma'}x))}\\
&\geq\norm{J^A_\gamma x-J^A_{\gamma'}x}.
\end{align*}
Thus $\norm{J^A_\gamma x-J^A_{\gamma'}x}=0$, i.e. $J^A_\gamma x=_XJ^A_{\gamma'}x$.
\end{enumerate}
\end{proof}
\begin{remark}\label{rem:division}
As discussed in the context of the representation of real numbers in $\mathrm{WE}$-$\mathrm{PA}^\omega$ already, some subtleties arise when dealing with reciprocals like in the above presented axioms and theorems and we want to indicate what these subtleties are and how they can be formally addressed. As mentioned in the discussion of real arithmetic, formulas containing reciprocal expressions like, e.g., the resolvent axiom
\[
\forall \gamma^1,x^X\left(\gamma>_\mathbb{R}0\rightarrow \gamma^{-1}(x-_XJ^A_{\gamma}x)\in A(J^A_{\gamma}x)\right)
\]
are just seen as abbreviations for extended versions which make the necessary dependency on a parameter $l^0$ with $\vert\gamma\vert >_\mathbb{R}2^{-l}$ explicit, i.e. in the above example, one actually considers
\[
\forall \gamma^1,x^X,l^0\left(\gamma >_\mathbb{R}2^{-l}\rightarrow (\gamma)^{-1}_l(x-_XJ^A_{\gamma}x)\in A(J^A_{\gamma}x)\right)
\]
where $(\cdot)^{-1}_l$ is the previously discussed closed term representing the reciprocal correctly for arguments $\alpha^1$ satisfying $\vert\alpha\vert>_\mathbb{R}2^{-l}$.

In most situations, like, e.g., in the formal theorems and proofs presented above, these details can be neglected without loss of generality. But they can feature prominently in some contexts, like the extraction of quantitative information. For example, item (4) of the above Proposition \ref{pro:respropaccr} established extensionality of the resolvent with respect to its first argument, the parameter $\gamma$, in $\mathcal{V}^\omega$, i.e.
\[
\mathcal{V}^\omega\vdash\forall x^X,\gamma^1,{\gamma'}^1\left(\gamma>_\mathbb{R}0\land\gamma'>_\mathbb{R}0\land\gamma=_\mathbb{R}\gamma'\rightarrow J^A_\gamma x=_XJ^A_{\gamma'}x\right).
\]
Making the hidden quantifiers apparent, we actually prove
\begin{align*}
&\forall x^X,\gamma^1,{\gamma'}^1,l^0,{l'}^0,k^0\exists j^0\Big(\gamma\geq_\mathbb{R}2^{-l}\land\gamma'\geq_\mathbb{R}2^{-l'}\\
&\qquad\qquad\qquad\qquad\land\vert\gamma-\gamma'\vert\leq_\mathbb{R}2^{-j}\rightarrow\norm{J^A_\gamma x-_XJ^A_{\gamma'}x}_X<_\mathbb{R}2^{-k}\Big).
\end{align*}
and a bound extraction result akin to the other metatheorems of proof mining (which we want to establish for $\mathcal{V}^\omega$) should guarantee a computable function realizing $j$ in terms of the parameters which is, moreover, highly uniform w.r.t. $x$ (maybe only depending on a weak upper bound on the norm) but which will depend in particular on $l$ and $l'$ besides the rather immediately obvious dependence on $k$ as an input! Such a function can indeed be obtained from the proof given above: By the resolvent axiom, we have
\[
\gamma^{-1}(x-J^A_\gamma x)\in A(J^A_\gamma x)\text{ and }\gamma'^{-1}(x-J^A_{\gamma'} x)\in A(J^A_{\gamma'} x)
\] 
and by accretivity of $A$, we get
\begin{align*}
\norm{J^A_\gamma x-J^A_{\gamma'}x}&\leq\norm{J^A_\gamma x-J^A_{\gamma'}x+\vert\gamma\vert(\gamma^{-1}(x-J^A_\gamma x)-\gamma'^{-1}(x-J^A_{\gamma'}x))}\\
&=\norm{\left(1-\frac{\gamma}{\gamma'}\right)x+\left(\frac{\gamma}{\gamma'}-1\right)J^A_{\gamma'}x}\\
&=\left\vert 1-\frac{\gamma}{\gamma'}\right\vert\norm{x-J^A_{\gamma'}x}
\end{align*}
as provably $\vert\gamma\vert=\gamma$ for $\gamma>0$.

Now, let $b\geq\norm{x-J^A_{\gamma'}x}$, $\gamma'\geq 2^{-l'}$ and $\vert\gamma-\gamma'\vert\leq 2^{-j}$. Then 
\[
\left\vert 1-\frac{\gamma}{\gamma'}\right\vert=\left\vert\frac{\gamma'-\gamma}{\gamma'}\right\vert\leq\frac{\vert\gamma'-\gamma\vert}{\gamma'}\leq\frac{2^{-j}}{2^{-l'}}.
\]
Thus, we have 
\[
\norm{J^A_\gamma x-J^A_{\gamma'}x}\leq b\frac{2^{-j}}{2^{-l'}}.
\]
Thus we may take $j=\floor{k+l'+\log_2b}$ as a realizer for $j$ (which is even independent from $l$). The dependence on $b$ is explained by the previously mentioned dependence of the upcoming metatheorems on a bound of the displacement $\norm{x-J^A_{\gamma}x}$ for arbitrary $x$ and $\gamma>0$ to majorize the resolvent.
\end{remark}
\begin{remark}\label{rem:yosidaremark}
Many other parts of the theory of $m$-accretive operators can be straightforwardly derived in $\mathcal{V}^\omega$. Just as an indication for that, we want to shortly mention some properties of a derived operator in this context: $A_\gamma$, the so-called \emph{Yosida approximate} (which is ubiquitous in the literature, see, e.g., \cite{Bar1976}), defined via
\[
A_\gamma:=\frac{1}{\gamma}(Id-J^A_\gamma).
\]
This can be treated by introducing $A_\gamma x$ as an abbreviation for the term $\frac{1}{\gamma}(x-J^A_\gamma x)$\footnote{Because of the subtleties with reciprocals, we do not define $A_\gamma$ via $\lambda$-abstraction.}. The following properties of the Yosida approximate are then provable in $\mathcal{V}^\omega$:
\begin{enumerate}
\item[(1)] $A_\gamma$ fulfills the characteristic inclusion for any $\gamma>0$, i.e.
\[
\forall\gamma^1,x^X\left(\gamma>_\mathbb{R}0\rightarrow A_\gamma x\in A(x-_X\gamma A_\gamma x)\right).
\]
\item[(2)] $A_\gamma$ is unique for any $\gamma>0$, i.e.
\[
\forall\gamma^1,p^X,x^X\left(\gamma>_\mathbb{R}0\land p\in A(x-_X\gamma p)\rightarrow p=_XA_\gamma x\right).
\]
\item[(3)] $A_\gamma$ is $2\gamma^{-1}$-Lipschitz continuous for any $\gamma>0$, i.e.
\[
\forall\gamma^1,x^X,y^X\left(\gamma>_\mathbb{R}0\rightarrow\norm{A_\gamma x-_XA_\gamma y}_X\leq_\mathbb{R}2\gamma^{-1}\norm{x-_Xy}_X\right).
\]
\item[(4)] $A_\gamma x$ is bounded by any $y\in Ax$ for any $\gamma>0$, i.e.
\[
\forall\gamma^1,x^X,y^X\left(\gamma>_\mathbb{R}0\land y\in Ax\rightarrow\norm{A_\gamma x}_X\leq_\mathbb{R}\norm{y}_X\right).
\]
\end{enumerate}
Proofs of those formalized theorems in particular rely on the use of $\Sigma_1\text{-}\mathrm{ER}$. 
\end{remark}
As discussed before, the bound extraction theorems established later yield that extensionality of $A$ can not be provable in our system as by using the former, we would obtain a quantitative interpretation of extensionality of $A$ which in this context amounts to a modulus of uniform continuity of $A$ (which will be formally discussed later on) which in general does not exist. We however still have the following weak rule of $A$-extensionality
\[
\frac{F\rightarrow s=_Xs'\quad F\rightarrow t=_Xt'}{F\rightarrow (s\in At\leftrightarrow s'\in At')}
\]
for an existential formula $F$ as a special case of the extensionality rule $\Sigma_1$-$\mathrm{ER}$ of $\mathcal{V}^\omega$.

Further, we will see that extensionality of $A$ is equivalent to the maximality statement for $A$ as well as to the previously mentioned stronger version of the resolvent axiom
\[
\forall \gamma^1,x^X,p^X\left(\gamma>_\mathbb{R}0\land p=_XJ^A_\gamma x\rightarrow \gamma^{-1}(x-_Xp)\in Ap\right).
\]
This is not in contradiction to Lemma \ref{lem:accretivechara} as it shows that extensionality of $A$ features in the respective proof in an essential way which thus can not be formalized in $\mathcal{V}^\omega$. In fact, the basis for the proof of parts of the upcoming theorem is that extensionality of $A$ is the only thing we need to add to $\mathcal{V}^\omega$ to formalize the proof of the lemma mentioned above.
\begin{theorem}\label{thm:extEquivAccretive}
Over $\mathcal{V}^\omega$, the following are equivalent:
\begin{enumerate}
\item[(1)] Extensionality of $A$, i.e.
\[
\forall x^X,y^X,{x'}^X,{y'}^{X}\left( x=_X x'\land y=_Xy'\rightarrow \chi_Axy=_0 \chi_Ax'y'\right).
\]
\item[(2)] The strong resolvent axiom, i.e.
\[
\forall x^X,p^X,\gamma^1\left(\gamma>_\mathbb{R}0\land p=_XJ^A_\gamma x\rightarrow \gamma^{-1}(x-_Xp)\in Ap\right).
\]
\item[(3)] Maximal accretivity of $A$, i.e.
\begin{align*}
&\forall x^X,u^X\Big(\forall y^X,v^X,\lambda^1\Big(v\in Ay\\
&\qquad\qquad\qquad\rightarrow\norm{x-_Xy+_X\vert\lambda\vert(u-_Xv)}_X\geq_\mathbb{R}\norm{x-_Xy}_X\Big)\rightarrow u\in Ax\Big).
\end{align*}
\item[(4)] The strong resolvent axiom for $\gamma=1$, i.e.
\[
\forall x^X,p^X\left(p=_XJ^A_{1} x\rightarrow (x-_Xp)\in Ap\right).
\]
\item[(5)] Maximal accretivity of $A$ for $\lambda=1$, i.e.
\begin{align*}
&\forall x^X,u^X\Big(\forall y^X,v^X\Big(v\in Ay\\
&\qquad\qquad\qquad\rightarrow\norm{x-_Xy+_X(u-_Xv)}_X\geq_\mathbb{R}\norm{x-_Xy}_X\Big)\rightarrow u\in Ax\Big).
\end{align*}
\item[(6)] Closure of the graph of $A$, i.e. 
\begin{align*}
\forall x^X,y^X,x_{(\cdot)}^{X(0)},y_{(\cdot)}^{X(0)}\Big(x_n\to_X x\land y_n\to_X y\land \forall n^0(y_n\in Ax_n)\rightarrow y\in Ax\Big)
\end{align*}
where $x_n\to_X x$ is short for
\[
\forall k^0\exists N^0\forall m\geq_0 N\left(\norm{x_m-_X x}_X\leq_\mathbb{R}2^{-k}\right)
\]
and similar for $y_n\to_X y$.
\end{enumerate}
\end{theorem}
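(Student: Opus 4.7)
The plan is to establish the equivalence via the cycle $(1) \Rightarrow (2) \Rightarrow (3) \Rightarrow (6) \Rightarrow (1)$, supplemented by the secondary chain $(2) \Rightarrow (4) \Rightarrow (5) \Rightarrow (1)$ together with the trivial weakening $(3) \Rightarrow (5)$. Each step is a short formal argument in $\mathcal{V}^\omega$ that either invokes axioms (II)/(III) directly, appeals to extensionality of the norm and of the arithmetic operations on $X$ (provable in $\mathcal{A}^\omega[X,\norm{\cdot}]$), or uses Proposition \ref{pro:respropaccr}; note that the proof of Lemma \ref{lem:accretivechara}(1) in the excerpt already contains the analytic core of the nontrivial implications.

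For $(1) \Rightarrow (2)$: given $p =_X J^A_\gamma x$, extensionality of the $X$-arithmetic yields $\gamma^{-1}(x - p) =_X \gamma^{-1}(x - J^A_\gamma x)$, so combining axiom (II) with the assumed extensionality of $A$ (applied with $J^A_\gamma x =_X p$ in the first and $\gamma^{-1}(x-J^A_\gamma x) =_X \gamma^{-1}(x-p)$ in the second argument of $\chi_A$) gives $\gamma^{-1}(x-p) \in Ap$. For $(2) \Rightarrow (3)$, and entirely analogously for $(4) \Rightarrow (5)$ by fixing $\gamma = \lambda = 1$, one formalizes the proof of Lemma \ref{lem:accretivechara}(1): assuming the accretivity-type premise on $(x,u)$, axiom (II) produces $(x+u) - J^A_1(x+u) \in A(J^A_1(x+u))$, and instantiating the premise with this pair at $\lambda = 1$ collapses the outer expression to $0$, forcing $x =_X J^A_1(x+u)$; the strong resolvent axiom then delivers $u = (x+u) - x \in Ax$. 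The weakenings $(2) \Rightarrow (4)$ and $(3) \Rightarrow (5)$ are immediate.

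For $(3) \Rightarrow (6)$: fix $(z,w) \in \mathrm{gra} A$ and $\lambda$; axiom (III) yields $\norm{x_n - z + \vert\lambda\vert(y_n - w)} \geq \norm{x_n - z}$ for every $n$, and an elementary estimate using the rates of convergence of $x_n, y_n$ together with the triangle inequality and the extensionality of the norm transports the inequality to $(x,y)$, whereupon (3) gives $y \in Ax$. For $(6) \Rightarrow (1)$: given $x =_X x'$, $y =_X y'$, and $y \in Ax$, the constant sequences $x_n := x, y_n := y$ converge to $x'$ and $y'$ respectively (since $x =_X x'$ is the $\Pi^0_1$-statement $\forall k(\norm{x-x'} \leq 2^{-k})$), so (6) yields $y' \in Ax'$; the other direction of the biconditional in extensionality of $A$ follows by symmetry. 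Finally, $(5) \Rightarrow (1)$ proceeds in the same spirit: to obtain $y' \in Ax'$ from $y \in Ax$, apply axiom (III) to $(x,y)$ against an arbitrary $(z,w) \in \mathrm{gra} A$ with $\lambda = 1$ and transport the bound to $(x', y')$ via extensionality of the norm, then invoke (5).

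The principal obstacle is the passage to the limit in $(3) \Rightarrow (6)$, which must be carried out inside $\mathcal{V}^\omega$ with explicit Cauchy rates, and the only other subtlety is tracking the intensional versus extensional distinction introduced by the reciprocal $\gamma^{-1}$ discussed in Remark \ref{rem:division} (which, however, never interferes since the parameter $\gamma$ remains fixed throughout every step). Every remaining implication reduces to a direct appeal to the axioms of $\mathcal{V}^\omega$, to Proposition \ref{pro:respropaccr}, or to the weak rule $\Sigma_1$-$\mathrm{ER}$ available in the system.
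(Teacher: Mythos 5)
Your proof is correct and complete, but it is organized differently from the paper's. The paper runs the cycle $(1)\Rightarrow(5)\Rightarrow(3)\Rightarrow(2)\Rightarrow(4)\Rightarrow(1)$ together with $(5)\Rightarrow(6)\Rightarrow(1)$: in particular it proves $(1)\Rightarrow(5)$ in one step (extensionality of $A$ directly yields maximality via the computation from Lemma \ref{lem:accretivechara}), it proves the nontrivial direction $(3)\Rightarrow(2)$ (maximality implies the strong resolvent axiom, by feeding axiom (II) through the accretivity inequality), and it recovers $(1)$ from $(4)$ via the biconditional characterization $p=_XJ^A_1x\leftrightarrow(x-_Xp)\in Ap$ together with extensionality of $J^A_1$. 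You instead factor the paper's $(1)\Rightarrow(5)$ into $(1)\Rightarrow(2)$ (extensionality pushes axiom (II) from $J^A_\gamma x$ to $p$) followed by $(2)\Rightarrow(3)$ (the Lemma \ref{lem:accretivechara} computation), and you recover $(1)$ from $(5)$ by transporting the accretivity inequality from $(x,y)$ to $(x',y')$ using extensionality of the norm — a route that bypasses the resolvent characterization entirely and is, if anything, slightly more direct than the paper's $(4)\Rightarrow(1)$. Since you close the main cycle through $(6)$, you never need the paper's $(3)\Rightarrow(2)$. All the individual steps you describe go through in $\mathcal{V}^\omega$ with the tools you cite (axioms (II)/(III), norm- and arithmetic-extensionality, $\Sigma_1$-$\mathrm{ER}$ for converting $1^{-1}s$ into $s$), and your remark that the limit passage in $(3)\Rightarrow(6)$ is the only delicate point matches the paper (though no explicit rates are actually needed there: for each $k$ one only needs \emph{some} $N$, which the $\Pi^0_3$ convergence statement supplies).

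One caveat: you describe $(3)\Rightarrow(5)$ as a ``trivial weakening,'' but the triviality runs the other way. Statement $(5)$ has the \emph{weaker} inner hypothesis (the inequality only for $\lambda=1$) and is therefore the \emph{stronger} maximality principle; it is $(5)\Rightarrow(3)$ that is immediate by instantiating $\lambda=1$ in the premise of $(3)$, whereas $(3)\Rightarrow(5)$ is genuinely nontrivial. This slip is harmless here because the implication is redundant: your chains $(1)\Rightarrow(2)\Rightarrow(3)\Rightarrow(6)\Rightarrow(1)$ and $(1)\Rightarrow(2)\Rightarrow(4)\Rightarrow(5)\Rightarrow(1)$ already place all six statements in a single equivalence class. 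You should simply delete the claimed $(3)\Rightarrow(5)$ or replace it by the correct trivial direction $(5)\Rightarrow(3)$.
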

\begin{proof}
For the whole proof, note that provably $1>0$ and thus $\vert 1\vert=1$.
\begin{enumerate}
\item [$(4)\Rightarrow (1)$] Let $x=_Xx'$ and $y=_Xy'$. Using (4), we get in particular that
\[
p=_XJ^A_1 x\leftrightarrow (x-p)\in Ap
\]
for all $p$. Therefore, we have
\begin{align*}
((y+x)-x)\in A x&\leftrightarrow x=_XJ^A_\gamma(y+x)\\
&\leftrightarrow x'=_XJ^A_\gamma(y'+x')\\
&\leftrightarrow ((y'+x')-x')\in Ax'
\end{align*}
using extensionality of $J^A_\gamma$. Now, provably without any assumptions, we have
\[
((y+x)-x)=_X y\text{ and }((y'+x')-x')=_Xy'.
\]
By the quantifier-free extensionality rule, we have $y\in A x\leftrightarrow y'\in Ax'$.
\item [$(1)\Rightarrow (5)$] Let $x,u$ be such that
\[
\forall y,v(v\in Ay\rightarrow\norm{x-y+(u-v)}\geq\norm{x-y}).
\]
Provably, without any assumptions, we have
\[
x+u=_XJ^A_1(x+u)+((x+u)-J^A_1(x+u))
\]
and by axiom (II), we have
\[
1^{-1}((x+u)-J^A_1(x+u))\in A(J^A_1(x+u))
\]
We get by assumption that
\begin{align*}
0&=\norm{x-J^A_1(x+u)+(u-1^{-1}((x+u)-J^A_1(x+u)))}\\
&\geq\norm{x-J^A_1(x+u)}
\end{align*}
using additionally the extensionality of the norm and the arithmetical operations. Thus $x=_XJ^A_1(x+u)$ and therefore $u=_Xx+u-J^A_1(x+u)=_X1^{-1}((x+u)-J^A_1(x+u))$. Thus, $1^{-1}((x+u)-J^A_1(x+u))\in A(J^A_1(x+u))$ implies $u\in Ax$ by extensionality of $A$.
\item [$(3)\Rightarrow (2)$] Let $\gamma\neq 0$ be given and assume $p=_XJ^A_\gamma x$. Then by axiom (II), we get $\gamma^{-1}(x-J^A_\gamma x)\in A(J^A_\gamma x)$. By accretivity of $A$, we get
\[
\forall y,v,\lambda(v\in Ay\rightarrow \norm{J^A_\gamma x-y+\vert\lambda\vert(\gamma^{-1}(x-J^A_\gamma x)-v)}\geq\norm{J^A_\gamma x-y}).
\]
Using extensionality of the norm, we get
\[
\forall y,v,\lambda(v\in Ay\rightarrow \norm{p-y+\vert\lambda\vert(\gamma^{-1}(x-p)-v)}\geq\norm{p-y}).
\]
By maximality, we get $\gamma^{-1}(x-p)\in Ap$.
\item [$(2)\Rightarrow (4)$ and $(5)\Rightarrow (3)$] Clear by using the extensionality rule for $A$.

\item [$(6)\Rightarrow (1)$] For $x=x'$ and $y=y'$, we have $(x)_n\to x'$ and $(y)_n\to y'$ where $(x)_n$ and $(y)_n$ are the constant $x$- and $y$-sequences, respectively. If $y\in Ax$, then by closure of the graph $y'\in Ax'$.

\item [$(5)\Rightarrow (6)$] Let $x_n\to x$ and $y_n\to y$ as well as $y_n\in Ax_n$ for all $n$. Let $v,w$ be arbitrary with $v\in Aw$. Then, by axiom (III)
\[
\norm{x_n-w+y_n-v}\geq\norm{x_n-w}
\]
for all $n$ and thus by taking the limit $\norm{x-w+y-v}\geq\norm{x-w}$. By maximal accretivity, as $v,w$ are arbitrary, we have $y\in Ax$.
\end{enumerate}
\end{proof}
It should be noted that this is not a constructed phenomenon resulting from tying the (maybe for a pure mathematician obscure) concept of extensionality of $A$ to the rather artificial concept of maximal accretiveness from Lemma \ref{lem:accretivechara} but that this also happens in the case of (generalized) monotone operators on Hilbert spaces where maximal accretiveness is replaced with maximal monotonicity or maximal $\rho$-comonotonicity which, in that case, are classically (meaning with extensionality) as strong as totality of the resolvent and thus are true maximality principles. Therefore, the link between maximality properties and extensionality of the operator $A$ seems to be rather strong (which is further supported by the fact that the proofs for the equivalences actually only use very weak fragments of $\mathcal{V}^\omega$).

In that way, this accretive case is particularly interesting as we have that a classically (meaning with extensionality of $A$) weaker property becomes incomparable in the absence of extensionality.
\begin{remark}\label{rem:optRem}
As mentioned before, the above results imply that maximality is not provable in $\mathcal{V}^\omega$ but we have used the classically (meaning with extensionality of $A$) stronger (or equivalent in the later cases of (generalized) monotone operators) principle of totality of the resolvents as an axiom in our system. This implies, in some sense, that the system $\mathcal{V}^\omega$ is optimal in strength among systems which allow for bound extraction results and do not require additional quantitative assumption on the operator $A$ like a uniform continuity assumption.
\end{remark}
\subsubsection{Logical systems for maximal monotone operators}
In this section, we introduce formal systems accommodating inner product spaces and corresponding maximal monotone operators (or monotone operators with total resolvents).

\smallskip

First of all, monotonicity and accretivity are equivalent for inner product spaces (see Theorem \ref{thm:monotoneEquivalence}), i.e.\ we can make (immediate) use of $\mathcal{V}^\omega$ . Hence, adding the axioms (I)-(IV) from before to $\mathcal{A}^\omega[X,\langle\cdot,\cdot\rangle]$ (or, in other words, adding the parallelogram law to $\mathcal{V}^\omega$) results in a corresponding system for monotone operators with total resolvents which we denote by {$\mathcal{A}^\omega[X,\langle\cdot,\cdot\rangle,A,J^A]$\label{th:TotMon}}.

We again begin with some elementary theorems of $\mathcal{T}^\omega=\mathcal{A}^\omega[X,\langle\cdot,\cdot\rangle,A,J^A]$ where, in particular, $\mathcal{T}^\omega$ will prove monotonicity of $A$ and thus behave as if we would have used this as an axiom instead (as accretivity is conversely provable from monotonicity in the inner product setting, see Remark \ref{rem:resolventprop}). In that vein, the following proposition formalizes parts of Theorem \ref{thm:monotoneEquivalence} and also in essence provides a proof for parts of Remark \ref{rem:altDefNE} regarding the alternative definition of firm nonexpansivity.

\begin{proposition}\label{pro:accImplMono}
$\mathcal{A}^\omega[X,\langle\cdot,\cdot\rangle]$ proves:
\begin{enumerate}
\item[(1)] $\forall x^X,y^X(\langle x,y\rangle_X\leq_\mathbb{R} 0\leftrightarrow \forall\alpha^1(\norm{x}_X\leq_\mathbb{R}\norm{x-_X\vert\alpha\vert y}_X))$.
\end{enumerate}
Further, $\mathcal{T}^\omega$ proves:
\begin{enumerate}
\item[(2)] $A$ is monotone, i.e.
\[
\forall x^X,y^X,u^X,v^X\left(u\in Ax\land v\in Ay\rightarrow\langle x-_Xy,u-_Xv\rangle_X\geq_\mathbb{R}0\right).
\]
\item[(3)] $J^A_\gamma$ satisfies the alternative notion of firm nonexpansivity for any $\gamma>0$, i.e.
\[
\forall\gamma^1,x^X,y^X\left(\gamma>_\mathbb{R}0\rightarrow \langle x-_Xy,J^A_\gamma x-_XJ^A_\gamma y\rangle_X\geq_\mathbb{R}\norm{J^A_\gamma x-_XJ^A_\gamma y}^2_X\right).
\]
\end{enumerate}
\end{proposition}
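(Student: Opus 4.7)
The plan is to derive (1) directly from the polarization identity, then leverage (1) to convert the accretivity axiom (III) into monotonicity (2), and finally obtain (3) as a routine inner-product calculation using axiom (II) and (2).

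For (1), my starting point is the identity $\norm{x - \vert\alpha\vert y}_X^2 = \norm{x}_X^2 - 2\vert\alpha\vert\langle x,y\rangle_X + \vert\alpha\vert^2\norm{y}_X^2$, which is provable in $\mathcal{A}^\omega[X,\langle\cdot,\cdot\rangle]$ since $\langle\cdot,\cdot\rangle_X$ is a defined notion via the polarization formula and the operations $\norm{\cdot}_X$, $+_X$, $\cdot_X$ are extensional. The $\Rightarrow$ direction is then immediate: if $\langle x,y\rangle_X \leq_\mathbb{R} 0$ the last two terms on the right are nonnegative, so $\norm{x - \vert\alpha\vert y}_X^2 \geq \norm{x}_X^2$, and one takes square roots. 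For the $\Leftarrow$ direction the hypothesis rearranges to $\vert\alpha\vert\bigl(\vert\alpha\vert\norm{y}_X^2 - 2\langle x,y\rangle_X\bigr) \geq_\mathbb{R} 0$ for every $\alpha^1$. Instantiating $\vert\alpha\vert := 2^{-k}$ for arbitrary $k^0$ yields $\langle x,y\rangle_X \leq_\mathbb{R} 2^{-k-1}\norm{y}_X^2$; since this holds uniformly in $k$, the standard reasoning for the $\Pi^0_1$ predicate $\leq_\mathbb{R}$ gives $\langle x,y\rangle_X \leq_\mathbb{R} 0$.

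For (2), given $u \in Ax$ and $v \in Ay$, axiom (III) of $\mathcal{T}^\omega$ can be read as $\norm{(x-y) - \vert\lambda\vert\bigl(-(u-v)\bigr)}_X \geq_\mathbb{R} \norm{x-y}_X$ for every $\lambda^1$. Applying item (1) with $x-y$ in place of $x$ and $-(u-v)$ in place of $y$ (which is legitimate inside $\mathcal{T}^\omega$, an extension of $\mathcal{A}^\omega[X,\langle\cdot,\cdot\rangle]$) then gives $\langle x-y, -(u-v)\rangle_X \leq_\mathbb{R} 0$, i.e.\ $\langle x-y, u-v\rangle_X \geq_\mathbb{R} 0$, which is the desired monotonicity.

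For (3), I would fix $\gamma >_\mathbb{R} 0$ and invoke axiom (II) twice to get $\gamma^{-1}(x - J^A_\gamma x) \in A(J^A_\gamma x)$ and $\gamma^{-1}(y - J^A_\gamma y) \in A(J^A_\gamma y)$. Feeding these into (2) produces
\[
\langle J^A_\gamma x - J^A_\gamma y,\; \gamma^{-1}(x - J^A_\gamma x) - \gamma^{-1}(y - J^A_\gamma y)\rangle_X \geq_\mathbb{R} 0,
\]
and multiplying by $\gamma$, expanding and rearranging yields the claimed inequality $\langle x-y, J^A_\gamma x - J^A_\gamma y\rangle_X \geq_\mathbb{R} \norm{J^A_\gamma x - J^A_\gamma y}_X^2$. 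The only point requiring genuine care is the bookkeeping for $\gamma^{-1}$ flagged in Remark \ref{rem:division}: formally one drags along a parameter $l^0$ with $\gamma \geq_\mathbb{R} 2^{-l}$ and uses the closed term $(\gamma)^{-1}_l$, but since $\gamma$ cancels in the final inequality no dependence on $l$ survives in the conclusion, and the proof is essentially the one given above.
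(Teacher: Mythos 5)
Your proposal is correct. Items (1) and (2) coincide with the paper's argument: the paper simply cites \cite{BC2017}, Lemma 2.13(i) for (1) and omits the details, but your polarization-expansion proof (including the instantiation $\vert\alpha\vert:=2^{-k}$ for the $\Leftarrow$ direction) is exactly the intended formalization, and your derivation of (2) from axiom (III) via item (1) applied to $x-y$ and $-(u-v)$ is verbatim the paper's proof. For item (3), however, you take a genuinely different route. The paper does \emph{not} go back to axiom (II) and monotonicity; instead it applies item (1) to the pair $J^A_\gamma x-J^A_\gamma y$ and $(J^A_\gamma x-J^A_\gamma y)-(x-y)$ to show that the inner-product formulation $\langle x-y,J^A_\gamma x-J^A_\gamma y\rangle\geq\norm{J^A_\gamma x-J^A_\gamma y}^2$ is \emph{equivalent} to the original norm-based firm nonexpansivity $\norm{J^A_\gamma x-J^A_\gamma y}\leq\norm{r(x-y)+(1-r)(J^A_\gamma x-J^A_\gamma y)}$ for all $r>0$, which was already established in Proposition \ref{pro:respropaccr}(2); this both proves (3) and simultaneously formalizes the equivalence of the two definitions of firm nonexpansivity from Remark \ref{rem:altDefNE}, which is part of the stated purpose of the proposition. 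Your direct computation --- feeding the two instances of axiom (II) into monotonicity, clearing $\gamma^{-1}$, and rearranging --- is the standard textbook Hilbert-space argument and is perfectly valid in $\mathcal{T}^\omega$ (your remark about tracking the parameter $l^0$ for $(\gamma)^{-1}_l$ is the right caveat and is harmless here since $\gamma\cdot\gamma^{-1}=1$ is provable under $\gamma>_\mathbb{R}2^{-l}$); it is arguably more self-contained, at the cost of not yielding the equivalence of the two notions as a by-product.
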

\begin{proof}
\begin{enumerate}
\item[(1)] A proof as hinted on in \cite{BC2017}, Lemma 2.13, (i) can be immediately formalized in the system and we thus omit the details.
\item[(2)] Let $u\in Ax$ and $v\in Ay$. By accretivity, for any $\lambda$, we get
\[
\norm{x-y+\vert\lambda\vert(u-v)}\geq\norm{x-y}.
\]
By item (1), we get
\[
0\geq\langle x-y,-(u-v)\rangle=-\langle x-y,u-v\rangle
\]
and thus $\langle x-y,u-v\rangle\geq 0$.
\item[(3)] We give a similar proof as in \cite{BC2017}, Proposition 4.4. Let $\gamma>0$. Applying item (1) to $J^A_\gamma x-J^A_\gamma y$ and $(J^A_\gamma x-J^A_\gamma y)-(x-y)$, we get
\begin{align*}
&\norm{J^A_\gamma x-J^A_\gamma y}^2\leq\langle x-y,J^A_\gamma x-J^A_\gamma y\rangle\\
&\quad\text{ iff }\langle J^A_\gamma x-J^A_\gamma y,(J^A_\gamma x -J^A_\gamma y)-(x-y)\rangle\leq 0\\
&\quad\text{ iff }\forall\alpha>0\left(\norm{J^A_\gamma x-J^A_\gamma y}\leq\norm{(J^A_\gamma x-J^A_\gamma y)-\alpha((J^A_\gamma x-J^A_\gamma y)-(x-y))}\right)\\
&\quad\text{ iff }\forall\alpha>0\left(\norm{J^A_\gamma x-J^A_\gamma y}\leq\norm{(1-\alpha)(J^A_\gamma x-J^A_\gamma y)+\alpha(x-y)}\right).
\end{align*}
The latter, i.e. the original notion of firm nonexpansivity, is provable by Proposition \ref{pro:respropaccr}.
\end{enumerate}
\end{proof}
We can obtain a similar characterization of extensionality of $A$ which, in this inner product case, is additionally equivalent to maximal monotonicity of $A$, as indicated by parts of Theorem \ref{thm:monotoneEquivalence}.
\begin{theorem}
Over $\mathcal{T}^\omega$, items (1) - (6) of Theorem \ref{thm:extEquivAccretive} are pairwise equivalent to each other and additionally to
\begin{enumerate}
\item[(7)] maximal monotonicity of $A$, i.e.
\[
\forall x^X,u^X\left(\forall y^X,v^X\left(v\in Ay\rightarrow\langle x-_Xy,u-_Xv\rangle_X\geq_\mathbb{R}0\right)\rightarrow u\in Ax\right).
\]
\end{enumerate}
\end{theorem}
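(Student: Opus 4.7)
The plan is to exploit the fact that $\mathcal{T}^\omega$ extends $\mathcal{V}^\omega$ only by the parallelogram law, without introducing new constants or weakening any previously available axioms. Consequently, the entire argument for Theorem~\ref{thm:extEquivAccretive} transfers verbatim into $\mathcal{T}^\omega$ and yields the pairwise equivalence of items (1)--(6) there as well. The remaining task is therefore to link (7) to one of these six items.

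The natural choice is to connect (7) directly with (3), since both are maximality statements of precisely the same logical shape, differing only in how the separation between a candidate pair $(x,u)$ and the graph of $A$ is phrased. Concretely, I would establish that $\mathcal{T}^\omega$ proves, for every fixed $x,y,u,v\in X$,
\[
\langle x-_Xy,\, u-_Xv\rangle_X \geq_\mathbb{R} 0 \ \leftrightarrow\ \forall \lambda^1\left(\norm{x-_Xy +_X |\lambda|(u-_Xv)}_X \geq_\mathbb{R} \norm{x-_Xy}_X\right).
\]
This is obtained by applying Proposition~\ref{pro:accImplMono}(1) with the substitution $x \mapsto x-_Xy$, $y \mapsto v-_Xu$, combining it with extensionality of the norm, the inner product and the arithmetic operations on $X$ (so that $\norm{(x-y)-|\alpha|(v-u)}_X =_\mathbb{R} \norm{(x-y)+|\alpha|(u-v)}_X$), and using the trivial identity $\langle a,-b\rangle_X =_\mathbb{R} -\langle a,b\rangle_X$ to convert the sign in the inner product.

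Granted this pointwise equivalence, the universal premises in (3) and (7) coincide for every $(x,u)$, and the implication (3) $\Leftrightarrow$ (7) is immediate by quantifying over $y,v$ under the assumption $v\in Ay$ in both formulations. Combined with the chain (1)$\Leftrightarrow\cdots\Leftrightarrow$(6) inherited from Theorem~\ref{thm:extEquivAccretive}, this gives the full seven-fold equivalence.

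The only real concern is that no appeal to extensionality of $A$ sneaks in (which would render the proof circular with respect to the characterization being established). This is not an issue: Proposition~\ref{pro:accImplMono}(1) is proved purely in $\mathcal{A}^\omega[X,\langle\cdot,\cdot\rangle]$ and makes no reference to $\chi_A$ whatsoever, and the translation between the two premises uses only the extensionality of the operations on $X$, the norm, and the inner product, all of which are unconditionally provable in $\mathcal{T}^\omega$. Thus the main substantive step is really just the pointwise reformulation above, and everything else is a matter of piggybacking on the previous theorem.
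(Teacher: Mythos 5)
Your proposal is correct and takes essentially the same route as the paper: the paper likewise observes that (1)--(6) remain pairwise equivalent over $\mathcal{T}^\omega$ since it extends $\mathcal{V}^\omega$, reduces the task to $(3)\Leftrightarrow(7)$, and obtains both implications by translating the universal premise of (3) into that of (7) (and back) pointwise via Proposition \ref{pro:accImplMono}(1), exactly as you describe.
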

\begin{proof}
By Theorem \ref{thm:extEquivAccretive}, it thus suffices to show $(3)\Leftrightarrow (7)$.
\begin{enumerate}
\item [$(3)\Rightarrow (7)$] Let $x,u$ be such that
\[
\forall y,v(v\in Ay\rightarrow \langle x-y,u-v\rangle\geq 0).
\]
By Proposition \ref{pro:accImplMono}, (1), we get
\[
\forall y,v(v\in Ay\rightarrow \forall\alpha(\norm{x-y+\vert\alpha\vert(u-v)}\geq\norm{x-y})).
\]
Using the assumed (3), we get $u\in Ax$.
\item [$(7)\Rightarrow (3)$] Let $x,u$ be such that
\[
\forall y,v,\lambda(v\in Ay\rightarrow\norm{x-y+\vert\lambda\vert(u-v)}\geq\norm{x-y}).
\]
Again, by Proposition \ref{pro:accImplMono}, (1), we get 
\[
\forall y,v(v\in Ay\rightarrow\langle x-y,u-v\rangle\geq 0)
\]
and thus, the assumed (7) implies $u\in Ax$.
\end{enumerate}
\end{proof}
In this light, maximality links with extensionality which is, as discussed before, in general unprovable (note that the previous Remark \ref{rem:optRem} also applies here). This potentially hinders formalizations of theorems that use said maximality which are ubiquitous in the literature. We will later see quantitative forms of extensionality which can be added to these systems to treat proofs where this features as an essential assumption. However, many applications actually only assume maximal monotonicity in order to use a total resolvent (or can be rephrased as such). This, together with the fact that the resolvent is provably extensional and that we still have the extensionality rule with existential premises, allows for substantial results on maximal monotone operators to be carried out in $\mathcal{T}^\omega$. For an example of this, we consider the following proposition. The original result (see, e.g., Proposition 23.31 from \cite{BC2017}) assumes maximal monotonicity of $A$. We will however see that a total extensional resolvent with an extensionality rule for $A$ is sufficient. Also, the properties will later be useful for the proof of the metatheorems.
\begin{proposition}\label{pro:resolventprop}
$\mathcal{T}^\omega$ proves:
\begin{enumerate}
\item[(1)] $\forall \gamma^1,\lambda^1,x^X\left(\gamma>_\mathbb{R}0\land\lambda>_\mathbb{R}0\rightarrow J^A_\lambda x=_X J^A_{\gamma}\left(\frac{\gamma}{\lambda} x+_X(1-\frac{\gamma}{\lambda})J^A_\lambda x\right)\right).$
\item[(2)] $\forall \gamma^1,\lambda^1,x^X\left(\gamma>_\mathbb{R}0\land\lambda>_\mathbb{R}0\rightarrow\norm{x-_XJ^A_\gamma x}_X\leq_\mathbb{R}\left(2+\frac{\gamma}{\lambda}\right)\norm{x-_XJ^A_\lambda x}_X\right).$
\end{enumerate}
\end{proposition}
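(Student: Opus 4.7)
The plan is to derive (1) from the uniqueness of the resolvent (Proposition \ref{pro:respropaccr}(1)) and then obtain (2) from (1) by the triangle inequality together with the nonexpansivity of $J^A_\gamma$ (Proposition \ref{pro:respropaccr}(3)). The model case is the classical one-line manipulation: if $q=\frac{\gamma}{\lambda}x+(1-\frac{\gamma}{\lambda})J^A_\lambda x$, then $q-J^A_\lambda x =_X \frac{\gamma}{\lambda}(x-J^A_\lambda x)$, so that $\gamma^{-1}(q-J^A_\lambda x)=_X\lambda^{-1}(x-J^A_\lambda x)$, and since the right side lies in $A(J^A_\lambda x)$ by axiom (II), a suitable transfer will place the left side there as well, whence uniqueness identifies $J^A_\gamma q$ with $J^A_\lambda x$.

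In more detail, for (1) I would fix $\gamma,\lambda>_\mathbb{R}0$, introduce $q$ as above via $\lambda$-abstraction, and use axiom (II) instantiated at $\lambda$ and $x$ to get $\lambda^{-1}(x-_XJ^A_\lambda x)\in A(J^A_\lambda x)$. The key step is then to conclude $\gamma^{-1}(q-_XJ^A_\lambda x)\in A(J^A_\lambda x)$ from the provable $=_X$-equality between these two elements of $X$. Since the equality is derived under the existential premise $\gamma>_\mathbb{R}0\land\lambda>_\mathbb{R}0$ (which unfolds to a $\Sigma_1$ statement providing witnesses for the reciprocals), this is an instance of the rule $\Sigma_1\text{-}\mathrm{ER}$ from Remark \ref{rem:strongExtRule}, applied with $r:=\chi_A(J^A_\lambda x)(\cdot)$ and the substitution between the two representatives. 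From $\gamma^{-1}(q-_XJ^A_\lambda x)\in A(J^A_\lambda x)$, Proposition \ref{pro:respropaccr}(1) with input $p:=J^A_\lambda x$ and $x:=q$ yields $J^A_\lambda x=_X J^A_\gamma q$, i.e.\ exactly the statement of (1).

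For (2), I would take $q$ again as in (1) and insert $J^A_\gamma q=_X J^A_\lambda x$ (using extensionality of the norm and Proposition \ref{pro:respropaccr}(4)) inside a triangle-inequality expansion:
\[
\norm{x-_XJ^A_\gamma x}_X\leq\norm{x-_XJ^A_\lambda x}_X+\norm{J^A_\gamma q-_XJ^A_\gamma x}_X.
\]
Nonexpansivity of $J^A_\gamma$ (Proposition \ref{pro:respropaccr}(3)) bounds the second summand by $\norm{q-_Xx}_X=\vert1-\tfrac{\gamma}{\lambda}\vert\,\norm{J^A_\lambda x-_Xx}_X$, and $\vert1-\tfrac{\gamma}{\lambda}\vert\leq 1+\tfrac{\gamma}{\lambda}$ then gives the claimed constant $2+\tfrac{\gamma}{\lambda}$. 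The only genuine obstacle is bookkeeping around the reciprocals and the lack of full extensionality of $A$: everything hinges on the step where we transport membership in $A(J^A_\lambda x)$ along a provable $=_X$-equality, which is exactly what the $\Sigma_1\text{-}\mathrm{ER}$ rule is designed to do under an existential assumption such as $\gamma>_\mathbb{R}0\land\lambda>_\mathbb{R}0$; the remainder of the argument is a straightforward computation inside $\mathcal{A}^\omega[X,\langle\cdot,\cdot\rangle]$.
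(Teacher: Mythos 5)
Your proposal is correct and follows essentially the same route as the paper: axiom (II) at $(\lambda,x)$, transport of the membership $\lambda^{-1}(x-_XJ^A_\lambda x)\in A(J^A_\lambda x)$ along the provable $=_X$-equality to $\gamma^{-1}(q-_XJ^A_\lambda x)$ via $\Sigma_1\text{-}\mathrm{ER}$ (the existential premises $\gamma>_\mathbb{R}0\land\lambda>_\mathbb{R}0$ being exactly why the $\Sigma_1$ form of the rule is needed, as the paper also notes in Remark \ref{rem:resolventprop}), then uniqueness of the resolvent for (1), and the same triangle-inequality/nonexpansivity computation with the bound $\vert 1-\tfrac{\gamma}{\lambda}\vert\leq 1+\tfrac{\gamma}{\lambda}$ for (2).
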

\begin{proof}
\begin{enumerate}
\item[(1)] By axiom (II), we get
\[
\lambda^{-1}(x-J^A_\lambda x)\in A(J^A_\lambda x).
\]
Now, provably in $\mathcal{T}^\omega$ using the assumptions $\lambda>0$ and $\gamma>0$, we have
\[
\lambda^{-1}(x-J^A_\lambda x)=_X\frac{\gamma}{\lambda\gamma}(x-J^A_\gamma x)=_X\frac{1}{\gamma}\left(\frac{\gamma}{\lambda} x+\left(1-\frac{\gamma}{\lambda}\right)J^A_\lambda x-J^A_\lambda x\right).
\]
By the extensionality rule for $A$ with existential premises, we get
\[
\frac{1}{\gamma}\left(\frac{\gamma}{\lambda} x+\left(1-\frac{\gamma}{\lambda}\right)J^A_\lambda x-J^A_\lambda x\right)\in A(J^A_\lambda x)
\]
which implies $J^A_\lambda x=_XJ^A_{\gamma}\left(\frac{\gamma}{\lambda} x+\left(1-\frac{\gamma}{\lambda}\right)J^A_\lambda x\right)$ by Proposition \ref{pro:respropaccr}, (1).
\item[(2)] Using item (1) and Proposition \ref{pro:respropaccr}, (3), we get
\begin{align*}
\norm{x-J^A_\gamma x}&\leq\norm{J^A_\gamma x -J^A_\lambda x}+\norm{x-J^A_\lambda x}\\
&=\norm{J^A_\gamma x - J^A_{\gamma}\left(\frac{\gamma}{\lambda}x+\left(1-\frac{\gamma}{\lambda}\right)J^A_\lambda x\right)}+\norm{x-J^A_\lambda x}\\
&\leq\norm{x - \frac{\gamma}{\lambda}x-\left(1-\frac{\gamma}{\lambda}\right)J^A_\lambda x}+\norm{x-J^A_\lambda x}\\
&\leq \left(1+\left\vert 1-\frac{\gamma}{\lambda}\right\vert\right)\norm{x-J^A_\lambda x}\\
&\leq \left(2+\frac{\gamma}{\lambda}\right)\norm{x-J^A_\lambda x}.
\end{align*}
\end{enumerate}
\end{proof}
\begin{remark}\label{rem:resolventprop}
\begin{enumerate}
\item[(1)] The above applications of the extensionality rule rely on the assumptions $\gamma>_\mathbb{R}0$ and $\lambda>_\mathbb{R}0$ which are not quantifier-free but existential. Thus also here, the derivable $\Sigma_1\text{-}\mathrm{ER}$ from Remark \ref{rem:strongExtRule} is crucial.
\item[(2)] The above results can, as apparent from the proof, already be established in $\mathcal{V}^\omega$. The result is however much more instructive if phrased for $\mathcal{T}^\omega$, since (as already highlighted in the comment preceding the proposition) Proposition 23.31 in \cite{BC2017} actually states the results with the assumption of maximal monotonicity of $A$ which, however, is not necessary here but only totality of the resolvent together with a weak rule of extensionality of $A$. 
\item[(3)] Further nice applications of the extensionality rule (with existential premises) are the following alternative axiomatizations of the theories $\mathcal{T}^\omega$ and $\mathcal{V}^\omega$:
\begin{enumerate}
\item In any variant of the theory $\mathcal{V}^\omega$ where the accretivity axiom for $A$ is replaced by
\begin{enumerate}
\item nonexpansivity for $J^A_\gamma$ for all $\gamma>0$ (and uniqueness of the resolvent), i.e. 
\[
\begin{cases}\forall x^X,y^X,\gamma^1\left(\gamma>_\mathbb{R}0\rightarrow \norm{x-_Xy}_X\geq_\mathbb{R}\norm{J^A_{\gamma} x-_XJ^A_{\gamma} y}_X\right),\\
\forall \gamma^1,p^X,x^X\left(\gamma>_\mathbb{R}0\land \gamma^{-1}(x-_Xp)\in Ap\rightarrow p=_X J^A_{\gamma} x\right),\end{cases}
\]
\end{enumerate}
\begin{enumerate}
\item [or ii.] firm nonexpansivity for $J^A_\gamma$ for all $\gamma>0$ (and uniqueness of the resolvent), i.e. 
\[
\begin{cases}\forall\gamma^1,r^1,x^X,y^X\Big(\gamma>_\mathbb{R}0\land r>_\mathbb{R}0_\mathbb{R}\rightarrow\norm{J^A_\gamma x-_XJ^A_\gamma y}_X\\
\qquad\qquad\qquad\leq_\mathbb{R}\norm{r(x-_Xy)+_X(1-r)(J^A_\gamma x-_XJ^A_\gamma y)}_X\Big),\\
\forall \gamma^1,p^X,x^X\left(\gamma>_\mathbb{R}0\land \gamma^{-1}(x-_Xp)\in Ap\rightarrow p=_X J^A_{\gamma} x\right),\end{cases}
\]
\item [or iii.] firm nonexpansivity for $J^A_1$ (and uniqueness of the resolvent), i.e.
\[
\begin{cases}\forall r^1,x^X,y^X\Big(r>_\mathbb{R}0_\mathbb{R}\rightarrow\norm{J^A_{1} x-_XJ^A_{1} y}_X\\
\qquad\qquad\qquad\leq_\mathbb{R}\norm{r(x-_Xy)+_X(1-r)(J^A_{1} x-_XJ^A_{1} y)}_X\Big),\\
\forall \gamma^1,p^X,x^X\left(\gamma>_\mathbb{R}0\land \gamma^{-1}(x-_Xp)\in Ap\rightarrow p=_X J^A_{\gamma} x\right),\end{cases}
\]
\end{enumerate}
one can actually prove the accretivity axiom. This formalizes parts of Theorem \ref{thm:accretiveEquivalence} but we omit the corresponding proofs as the ones provided in \cite{Bar1976,Tak2000} can be almost immediately formalized.
\item In the theory $\mathcal{T}^\omega$, the accretivity axiom could be similarly replaced with either
\begin{enumerate}
\item monotonicity  of $A$, i.e. 
\[
\forall x^X,y^X,u^X,v^X\left(u\in Ax\land v\in Ay\rightarrow\langle x-_Xy,u-_Xv\rangle_X\geq_\mathbb{R} 0_\mathbb{R}\right),
\]
\end{enumerate}
\begin{enumerate}
\item [or ii.] with the alternative notion of firm nonexpansivity for $J^A_1$ (together with uniqueness of the resolvent), i.e.
\[
\begin{cases}
\forall x^X,y^X\left(\langle x-_Xy,J^A_{1} x-_XJ^A_{1} y\rangle_X\geq_\mathbb{R}\norm{J^A_{1} x-_XJ^A_{1} y}^2_X\right),\\
\forall p^X,x^X\left((x-_Xp)\in Ap\rightarrow p=_X J^A_{1} x\right).
\end{cases}
\]
\end{enumerate}
This formalizes parts of Theorem \ref{thm:monotoneEquivalence} but, again, the proofs provided in the corresponding standard references \cite{Bar1976,BC2017,EB1992} can be immediately formalized which is why we omit further details.
\end{enumerate}
\end{enumerate}
\end{remark}
\subsubsection{Logical systems for maximal $\rho$-comonotone operators}
Lastly, we formulate logical systems that can accommodate maximal $\rho$-comonotone operators, the latter as introduced in \cite{CP2004}.

\smallskip

These $\rho$-comonotone operators are a special class of generalized monotone operators in that we relax the above monotonicity inequality to
\[
\langle x-y,u-v\rangle\geq\rho\norm{u-v}^2
\]
for $(x,u),(y,v)\in\mathrm{gra}A$ and for some (potentially negative) parameter $\rho\in\mathbb{R}$.

\smallskip

For treating operators which are maximally $\rho$-comonotone, we add constants $\tilde{\rho}$ of type $1$ representing a name for $\rho$ and $n_{\tilde{\gamma}}$ of type $0$ to the language of $\mathcal{A}^\omega[X,\langle\cdot,\cdot\rangle,A,J^A]$. Again, $\tilde{\gamma}$ serves as an anchor index to majorize the resolvent via bounding $\norm{c_X-_XJ^A_{\tilde{\gamma}}c_X}_X$. For that, besides the previous condition $\tilde{\gamma}\geq_\mathbb{R}2^{-m_{\tilde{\gamma}}}$, we need a further condition $\tilde{\rho}\geq_\mathbb{R}-\tilde{\gamma}+2^{-n_{\tilde{\gamma}}}$ to quantitatively express $\tilde{\rho}>-\tilde{\gamma}$ as we actually only consider resolvents $J^A_\gamma$ for such indices $\gamma>0$ (as discussed later).

\smallskip

The theory of a maximal $\rho$-comonotone operators {$\mathcal{A}^\omega[X,\langle\cdot,\cdot\rangle,A,J^A,\tilde{\rho}]$\label{th:TotCoMon}} is then defined by extending $\mathcal{A}^\omega[X,\langle\cdot,\cdot\rangle]$ with the axioms
\begin{enumerate}
\item[(I)] $\forall x^X,y^X(\chi_A xy\leq_01)$,
\item[(II)] $\forall \gamma^1,x^X\left(\gamma>_\mathbb{R}0\rightarrow\gamma^{-1}(x-_XJ^A_{\gamma}x)\in A(J^A_{\gamma}x)\right)$,
\item[(III)] $\forall x^X,y^X,u^X,v^X\left(u\in Ax\land v\in Ay\rightarrow \langle x-_Xy,u-_Xv\rangle_X\geq_\mathbb{R}\tilde{\rho}\norm{u-_Xv}_X^2\right)$,
\item[(IV)] $\tilde{\gamma}\geq_\mathbb{R}2^{-m_{\tilde{\gamma}}}$,
\item[(V)] $\tilde{\rho}\geq_\mathbb{R}-\tilde{\gamma}+ 2^{-n_{\tilde{\gamma}}}$.
\end{enumerate}
We will at first develop most of the theory for maximally $\rho$-comonotone operators over $\mathcal{A}^\omega[X,\langle\cdot,\cdot\rangle,A,J^A,\tilde{\rho}]$. However, in the context of the bound extraction theorems later on, we will restrict to a version {$\mathcal{A}^\omega[X,\langle\cdot,\cdot\rangle,A,J^A,\tilde{\rho}^*]$\label{th:TotCoMonAlt}} where we only specify the behavior of the resolvents $J^A_\gamma$ for $\rho>-\gamma/2$, i.e. we replace axiom $\mathrm{(II)}$ by
\[
\forall \gamma^1,x^X\left(\tilde{\rho}>_\mathbb{R}-\gamma/2\land \gamma>_\mathbb{R}0_\mathbb{R}\rightarrow \gamma^{-1}(x-_XJ^A_{\gamma}x)\in A(J^A_{\gamma}x)\right)\tag{$\mathrm{II_1}$}
\]
and axiom $\mathrm{(IV)}$ by
\[
\tilde{\rho}\geq_\mathbb{R}-\tilde{\gamma}/2+2^{-n_{\tilde{\gamma}}}.\tag{$\mathrm{IV_1}$}
\]
We mainly do this as the resolvents with indices satisfying $\rho>-\gamma/2$ behave nicely w.r.t. majorization in the proof of the bound extraction theorem later on. This has, however, not a big impact on the range of the theorems as this assumption is anyway made in current applications (see \cite{Koh2021b}). See also the later Remark \ref{rem:UstarProp} for how that effects the following propositions.

We write $\mathcal{U}^\omega$ as a shorthand for the system $\mathcal{A}^\omega[X,\langle\cdot,\cdot\rangle,A,J^A,\tilde{\rho}]$ and $\mathcal{U}^{*\omega}$ for $\mathcal{A}^\omega[X,\langle\cdot,\cdot\rangle,A,J^A,\tilde{\rho}^*]$. We again exhibit some of the range of the formalizable theory by presenting some elementary properties of the resolvent that we can prove in the previously introduced systems. The following proposition is in that vein a partial formalization of Theorem \ref{thm:comonotoneEquivalence}. This result mainly stems from the work \cite{BMW2020} and most proofs can just be immediately formalized (after careful inspection of what axioms and rules are necessary in the corresponding formal system). For that reason, we omit most of the proofs and only include particularly interesting or instructive examples.
\begin{proposition}\label{pro:UFundTheorems}
$\mathcal{A}^\omega[X,\langle\cdot,\cdot\rangle]$ proves:
\begin{enumerate}
\item[(1)] For any $\alpha^1\in (0,1]$ and any $x^X,y^X$:
\[
\alpha^2(\norm{x}_X^2-\norm{(1-\alpha^{-1})x+_X\alpha^{-1}y}_X^2)=\alpha(\norm{x}_X^2-\alpha^{-1}(1-\alpha)\norm{x-_Xy}_X^2-\norm{y}_X^2).
\]
\item[(2)] For any $\alpha^1$ and any $x^X,y^X$:
\[
\alpha^2\norm{x}_X^2-\norm{(\alpha-1)x+_Xy}_X^2=2\alpha\langle x-_Xy,y\rangle_X-(1-2\alpha)\norm{x-_Xy}_X^2.
\]
\end{enumerate}
Further, $\mathcal{U}^\omega$ proves:
\begin{enumerate}
\item[(3)] $J^A_\gamma$ is single-valued if $\tilde{\rho}>-\gamma$, i.e. 
\[
\forall p^X,x^X,\gamma^1\left(\gamma>_\mathbb{R}0\land \tilde{\rho}>_\mathbb{R}-\gamma\land\gamma^{-1}(x-_Xp)\in Ap\rightarrow p=_X J^A_{\gamma} x\right).
\]
\item[(4)] $J^{\chi_A}$ is extensional in both arguments if $\tilde{\rho}>-\gamma$, i.e.
\begin{align*}
&\forall\gamma^1,{\gamma'}^1,x^X,{x'}^X\big(\gamma>_\mathbb{R}0\land{\gamma'}>_\mathbb{R}0\land\tilde{\rho}>_\mathbb{R}-\gamma\\
&\qquad\qquad\qquad\qquad\land x=_Xx'\land\gamma=_\mathbb{R}\gamma'\rightarrow J^A_\gamma x=_XJ^A_{\gamma'}x'\big).
\end{align*}
\item[(5)] $J^A_\gamma$ satisfies the alternative notion of being $\alpha$-conically nonexpansive for $\alpha=\frac{1}{2(\tilde{\rho}/\gamma+1)}$ if $\tilde{\rho}>-\gamma$, i.e.
\begin{align*}
&\forall \gamma^1,x^X,y^X\Big(\gamma>_\mathbb{R}0\land\tilde{\rho}>_\mathbb{R}-\gamma\\
&\qquad\qquad\qquad\rightarrow2\alpha\langle J^A_{\gamma}x-_XJ^A_{\gamma}y,(x-_X J^A_{\gamma}x)-_X(y-_X J^A_{\gamma}y)\rangle_X\\
&\qquad\qquad\qquad\qquad\qquad\qquad \geq_\mathbb{R}\left(1-2\alpha\right)\norm{(x-_XJ^A_{\gamma}x)-_X(y-_XJ^A_{\gamma}y)}_X^2\Big).
\end{align*}
\item[(6)] $J^A_\gamma$ is $\alpha$-conically nonexpansive for $\alpha=\frac{1}{2(\tilde{\rho}/\gamma+1)}$ if $\tilde{\rho}>-\gamma$, i.e. $\alpha\in (0,\infty)$ and
\[
(1-\alpha^{-1})Id+\alpha^{-1}J^A_\gamma\text{ is nonexpansive}.
\]
\item[(7)] $J^A_\gamma$ is nonexpansive if $\tilde{\rho}\geq-\gamma/2$, i.e.
\[
\forall\gamma^1,x^X,y^X\left(\gamma>_\mathbb{R}0\land\tilde{\rho}\geq_\mathbb{R}-\gamma/2\rightarrow\norm{J^A_\gamma x-_XJ^A_\gamma y}_X\leq\norm{x-_Xy}_X\right).
\]
\item[(8)] $J^A_\gamma$ satisfies the alternative notion of being $\alpha$-averaged for $\alpha=\frac{1}{2(\tilde{\rho}/\gamma+1)}$ if $\tilde{\rho}\geq-\gamma/2$, i.e.
\begin{align*}
&\forall \gamma^1,x^X,y^X\Big(\gamma>_\mathbb{R}0\land\tilde{\rho}\geq_\mathbb{R}-\gamma/2\\
&\qquad\qquad\qquad\rightarrow \left(1-\alpha\right)\norm{(x-_XJ^A_\gamma x)-_X(y-_XJ^A_\gamma y)}^2_X\\
&\qquad\qquad\qquad\qquad\qquad\qquad\leq_\mathbb{R}\alpha\left(\norm{x-_Xy}^2_X-\norm{J^A_\gamma x-_XJ^A_\gamma y}^2_X\right)\Big).
\end{align*}
\item[(9)] $J^A_\gamma$ is $\alpha$-averaged for $\alpha=\frac{1}{2(\tilde{\rho}/\gamma+1)}$ if $\tilde{\rho}>-\gamma/2$, i.e. $\alpha\in (0,1)$ and
\[
(1-\alpha^{-1})Id+\alpha^{-1}J^A_\gamma\text{ is nonexpansive}.
\]
\end{enumerate}
\end{proposition}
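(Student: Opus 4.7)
The plan is to split the proposition into two blocks. Items (1) and (2) are algebraic identities in any real inner product space; they are dispatched by expanding squared norms via $\langle\cdot,\cdot\rangle$, using bilinearity (provable in $\mathcal{A}^\omega[X,\langle\cdot,\cdot\rangle]$ from the parallelogram law), and collecting terms. No reference to $A$ or $J^A$ is needed, and the calculation is routine. Items (3)--(9) I would then organize so that (5) does the substantive work and the remaining items fall out either from the same computation or by substitution into identity (2).

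For (3), I would mirror the uniqueness proof of Proposition \ref{pro:respropaccr}(1): assume $\gamma^{-1}(x-p)\in Ap$, take the characteristic inclusion $\gamma^{-1}(x-J^A_\gamma x)\in A(J^A_\gamma x)$ from axiom (II), and feed both pairs into $\rho$-comonotonicity (axiom (III)). Using extensionality of the arithmetic operations on $X$, the resulting inequality simplifies to $(\tilde{\rho}+\gamma)\gamma^{-2}\norm{p-J^A_\gamma x}^2\leq 0$, and the hypothesis $\tilde{\rho}>-\gamma$ forces $p=_XJ^A_\gamma x$. For (4), rather than appealing to extensionality of $\chi_A$ (which Theorem \ref{thm:extEquivAccretive} identifies as unprovable in these systems), I would run the same comonotonicity computation for the two inputs, treating the cases $x=_Xx'$ and $\gamma=_\mathbb{R}\gamma'$ separately; vanishing of the relevant difference again reduces matters to $(\tilde{\rho}+\gamma)\gamma^{-2}\norm{J^A_\gamma x-J^A_{\gamma'}x'}^2\leq 0$.

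Item (5) is the hinge. Applying axiom (III) to the two graph points $(J^A_\gamma x,\gamma^{-1}(x-J^A_\gamma x))$ and $(J^A_\gamma y,\gamma^{-1}(y-J^A_\gamma y))$ supplied by axiom (II), then multiplying through by $\gamma$ and using bilinearity, gives
\[
\langle J^A_\gamma x-J^A_\gamma y,\ (x-J^A_\gamma x)-(y-J^A_\gamma y)\rangle \geq \frac{\tilde{\rho}}{\gamma}\norm{(x-J^A_\gamma x)-(y-J^A_\gamma y)}^2.
\]
For $\alpha=\frac{1}{2(\tilde{\rho}/\gamma+1)}$ one checks $\frac{1-2\alpha}{2\alpha}=\tilde{\rho}/\gamma$, so multiplying by $2\alpha>0$ yields (5). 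Item (6) then follows by substituting $x\mapsto x-y$ and $y\mapsto J^A_\gamma x-J^A_\gamma y$ into identity (2): the left-hand side becomes $\alpha^2(\norm{x-y}^2-\norm{N_\alpha x-N_\alpha y}^2)$ for $N_\alpha:=(1-\alpha^{-1})Id+\alpha^{-1}J^A_\gamma$, and (5) furnishes nonnegativity of the right-hand side. For (7) and (8), I would rearrange (5) into
\[
\norm{x-y}^2 \geq \frac{1-\alpha}{\alpha}\norm{(x-J^A_\gamma x)-(y-J^A_\gamma y)}^2 + \norm{J^A_\gamma x-J^A_\gamma y}^2;
\]
this is already (8), and (7) is the case where $\tilde{\rho}\geq -\gamma/2$ forces $\alpha\leq 1$, making the first summand nonnegative. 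Finally, (9) is (6) restricted to the regime $\tilde{\rho}>-\gamma/2$, where $\alpha<1$ places $N_\alpha$ in the averaged range.

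The main obstacle I anticipate, beyond the bookkeeping of reciprocals $\gamma^{-1}$ and $\alpha^{-1}$ indicated in Remark \ref{rem:division} (which requires carrying along the hidden parameters that witness $\gamma>0$ and $\tilde{\rho}/\gamma+1>0$ in the formal language), is the derivation of (4) without recourse to extensionality of $\chi_A$. The comonotonicity-based route outlined above is the crucial observation: it converts an equality of inputs into a bound on the distance between outputs via a pure inequality chain, never transporting membership statements along $=_X$ identifications, and it is the same pattern that underlies the rigidity of the resolvent throughout items (3)--(9).
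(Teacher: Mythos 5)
Your proposal is correct and follows essentially the same route as the paper: for items (3) and (4) you reproduce exactly the paper's comonotonicity computation (feeding the two graph points from axiom (II) into axiom (III) and using $\tilde{\rho}+\gamma>0$, with only extensionality of the norm, inner product and vector operations — never of $\chi_A$), and for items (1), (2) and (5)--(9), which the paper deliberately omits with references to \cite{BC2017,BMW2020}, your derivations (axiom (III) giving (5), the identities (1)/(2) bridging the ``alternative'' notions to nonexpansivity of $(1-\alpha^{-1})Id+\alpha^{-1}J^A_\gamma$, and the rearrangement yielding (7)/(8)) are precisely the intended formalizations.
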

\begin{proof}
\begin{enumerate}
\item[(1)] A proof along the lines hinted in Lemma 2.17, (i) from \cite{BC2017} can be straightforwardly formalized. 
\item[(3)] Suppose $\gamma>0$ and $\tilde{\rho}>-\gamma$ and let $\gamma^{-1}(x-p)\in Ap$. By axiom (II), we get $\gamma^{-1}(x-J^A_\gamma x)\in A(J^A_\gamma x)$ and by axiom (III), we get
\begin{align*}
-\gamma^{-1}\norm{p-J^A_\gamma x}^2&=\gamma^{-1}\langle J^A_\gamma x-p,p-J^A_\gamma x\rangle\\
&=\langle J^A_\gamma x-p,\gamma^{-1}(x-J^A_\gamma x)-\gamma^{-1}(x-p)\rangle\\
&\geq\tilde{\rho}\norm{\gamma^{-1}(x-J^A_\gamma x)-\gamma^{-1}(x-p)}^2\\
&=\tilde{\rho}/\gamma^2\norm{p-J^A_\gamma x}^2.
\end{align*}
As $\tilde{\rho}>-\gamma$, we get $\norm{p-J^A_\gamma x}=0$, i.e. $p=_XJ^A\gamma x$.
\item[(4)] Let $\gamma,\gamma'>0$ with $\gamma=\gamma'$ and let $x=_Xx'$. By axiom (II), we get
\[
\gamma^{-1}(x-J^A_\gamma x)\in A(J^A_\gamma x)\text{ and }\gamma'^{-1}(x'-J^A_{\gamma'}x')\in A(J^A_{\gamma'}x').
\]
Axiom (III) gives
\begin{align*}
-\gamma^{-1}\norm{J^A_\gamma x-J^A_{\gamma'} x'}^2&=\gamma^{-1}\langle J^A_\gamma x-J^A_{\gamma'} x',J^A_{\gamma'} x'-J^A_\gamma x\rangle\\
&=\langle J^A_\gamma x-J^A_{\gamma'} x',\gamma^{-1}(x-J^A_\gamma x)-\gamma'^{-1}(x'-J^A_{\gamma'} x')\rangle\\
&\geq\tilde{\rho}\norm{\gamma^{-1}(x-J^A_\gamma x)-\gamma'^{-1}(x'-J^A_{\gamma'} x')}^2\\
&=\tilde{\rho}/\gamma^2\norm{J^A_{\gamma'} x'-J^A_\gamma x}^2
\end{align*}
where we in particular used the extensionality of the scalar product, the norm and the arithmetical operations on $X$ and $\mathbb{R}$. Again, as $\tilde{\rho}>-\gamma$, we get $J^A_{\gamma'} x'=_XJ^A_\gamma x$.
\end{enumerate}
\end{proof}
Note that the above items (6) and (9) are formalizations for parts of Remark \ref{rem:altDefNE} regarding the alternative definitions for averaged and conically nonexpansive mappings.
\begin{remark}\label{rem:yosidaremarkcomon}
Similarly to Remark \ref{rem:yosidaremark}, a much larger part of the theory of $\rho$-comonotone operators can be formalized rather immediately and we again exemplify this by noting some essential properties of the important Yosida approximate which are provable in $\mathcal{U}^\omega$, namely
\begin{enumerate}
\item[(1)] $A_\gamma$ fulfills the characteristic inclusion for any $\gamma>0$, i.e.
\[
\forall\gamma^1,x^X(\gamma>_\mathbb{R}0\rightarrow A_\gamma x\in A(x-_X\gamma A_\gamma x)).
\]
\item[(2)] $A_\gamma$ is unique for any $\gamma>0$ with $\tilde{\rho}>-\gamma$, i.e.
\[
\forall\gamma^1,p^X,x^X(\gamma>_\mathbb{R}0\land\tilde{\rho}>_\mathbb{R}-\gamma\land p\in A(x-_X\gamma p)\rightarrow p=_XA_\gamma x).
\]
\item[(3)] $A_\gamma$ is $2\gamma^{-1}$-Lipschitz continuous for any $\gamma>0$ with $\tilde{\rho}\geq-\gamma/2$, i.e.
\[
\forall\gamma^1,x^X,y^X(\gamma>_\mathbb{R}0\land\tilde{\rho}\geq_\mathbb{R}-\gamma/2\rightarrow\norm{A_\gamma x-_XA_\gamma y}_X\leq_\mathbb{R}2\gamma^{-1}\norm{x-_Xy}_X).
\]
\item[(4)] $A_\gamma x$ is bounded by any $y\in Ax$ for any $\gamma>0$ with $\tilde{\rho}\geq-\gamma/2$, i.e.
\[
\forall\gamma^1,x^X,y^X(\gamma>_\mathbb{R}0\land\tilde{\rho}\geq_\mathbb{R}-\gamma/2\land y\in Ax\rightarrow\norm{A_\gamma x}_X\leq_\mathbb{R}\norm{y}_X).
\]
\end{enumerate}
\end{remark}
Also in the context of comonotone operators, we can get a nice characterization of maximal $\rho$-comonotonicity in terms of extensionality of the operator $A$, akin to the previous results. This amounts to formally carrying out the proof given in \cite{BMW2020} for parts of Theorem \ref{thm:comonotoneEquivalence}. 
\begin{theorem}\label{thm:comonExtEquiv}
Over $\mathcal{U}^\omega$, the following are equivalent:
\begin{enumerate}
\item[(1)] Extensionality of $A$, i.e.
\[
\forall x^X,y^X,{x'}^X,{y'}^{X}\left(x=_X x'\land y=_Xy'\rightarrow \chi_Axy=_0 \chi_Ax'y'\right)
\]
\item[(2)] The strong resolvent axiom for $\rho>-\gamma$, i.e.
\[
\forall \gamma^1,x^X,p^X\left(\gamma>_\mathbb{R}0\land\tilde{\rho}>_\mathbb{R}-\gamma\land p=_XJ^A_{\gamma}x \rightarrow \gamma^{-1}(x-_Xp)\in Ap\right).
\]
\item[(3)] Maximal $\rho$-comonotonicity of $A$, i.e.
\[
\forall x^X,u^X\big(\forall y^X,v^X\big(v\in Ay\rightarrow\langle x-_Xy,u-_Xv\rangle_X\geq_\mathbb{R}\tilde{\rho}\norm{u-_Xv}_X^2\big)\rightarrow u\in Ax\big).
\]
\item[(4)] Closure of the graph of $A$, i.e. 
\begin{align*}
\forall x^X,y^X,x_{(\cdot)}^{X(0)},y_{(\cdot)}^{X(0)}\Big(x_n\to_X x\land y_n\to_X y\land \forall n^0(y_n\in Ax_n)\rightarrow y\in Ax\Big).
\end{align*}
with $\rightarrow_X$ as before.
\end{enumerate}
\end{theorem}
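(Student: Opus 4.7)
The plan is to prove the theorem by adapting the structure of Theorem \ref{thm:extEquivAccretive}, establishing the cycle $(2)\Rightarrow(1)\Rightarrow(3)\Rightarrow(2)$ together with $(3)\Rightarrow(4)\Rightarrow(1)$. A useful feature of the comonotone setting (compared to the accretive one) is that axiom (V) guarantees $\tilde{\rho}>_\mathbb{R}-\tilde{\gamma}$, so whenever a specific index is needed we can fix $\gamma:=\tilde{\gamma}$; this plays the role that $\gamma=1$ plays in Theorem \ref{thm:extEquivAccretive}.

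For $(2)\Rightarrow(1)$, I would combine the strong resolvent axiom (2) with axiom (II) and the uniqueness result Proposition \ref{pro:UFundTheorems}(3) to obtain, for $\gamma:=\tilde{\gamma}$, the equivalence $p=_XJ^A_\gamma x\leftrightarrow \gamma^{-1}(x-_Xp)\in Ap$. Given $x=_Xx'$ and $y=_Xy'$, applying this equivalence with input $x+_X\gamma y =_X x'+_X\gamma y'$ and invoking extensionality of $J^A_\gamma$ (Proposition \ref{pro:UFundTheorems}(4)) together with the quantifier-free extensionality rule for the operations on $X$ then yields $y\in Ax\leftrightarrow y'\in Ax'$. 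For $(3)\Rightarrow(2)$, suppose $\gamma>_\mathbb{R}0$ with $\tilde{\rho}>_\mathbb{R}-\gamma$ and $p=_XJ^A_\gamma x$. Axiom (II) supplies $\gamma^{-1}(x-_XJ^A_\gamma x)\in A(J^A_\gamma x)$, and axiom (III) applied to arbitrary $(y,v)\in\mathrm{gra}A$, combined with the quantifier-free extensionality of the inner product, the norm and the arithmetic operations on $X$ (transporting $J^A_\gamma x$ to $p$), produces $\langle p-_Xy,\gamma^{-1}(x-_Xp)-_Xv\rangle_X\geq_\mathbb{R}\tilde{\rho}\norm{\gamma^{-1}(x-_Xp)-_Xv}_X^2$, whence the maximality principle (3) applied at $(p,\gamma^{-1}(x-_Xp))$ gives $\gamma^{-1}(x-_Xp)\in Ap$.

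The core step is $(1)\Rightarrow(3)$. Given $(x,u)$ satisfying the maximality hypothesis, set $\gamma:=\tilde{\gamma}$ and $p:=J^A_\gamma(x+_X\gamma u)$. Axiom (II) yields $v:=\gamma^{-1}((x+_X\gamma u)-_Xp)\in Ap$, and instantiating the hypothesis with $y:=p$ and this $v$ — after computing $u-_Xv=\gamma^{-1}(p-_Xx)$ — rearranges to $\gamma^{-2}(\tilde{\rho}+\gamma)\norm{x-_Xp}_X^2\leq_\mathbb{R}0$. Since $\tilde{\rho}>_\mathbb{R}-\gamma$ this forces $x=_Xp$; combined with $\gamma^{-1}((x+_X\gamma u)-_Xx)=_Xu$, the extensionality assumption (1) (used via $\Sigma_1$-$\mathrm{ER}$ to discharge the existential side condition $\tilde{\gamma}>_\mathbb{R}0$) transfers the membership $v\in Ap$ to $u\in Ax$. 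Finally, $(3)\Rightarrow(4)$ is obtained by passing to the limit in axiom (III) applied along the converging sequences, and $(4)\Rightarrow(1)$ is immediate using the constant sequences $(x)_n$, $(y)_n$, exactly as in Theorem \ref{thm:extEquivAccretive}.

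The main technical obstacle will be the step $(1)\Rightarrow(3)$: one must carefully carry out the algebraic computation of $u-_Xv$ in the presence of the reciprocal $\gamma^{-1}$ (so the caveats of Remark \ref{rem:division} are relevant, though the existential premise $\tilde{\gamma}>_\mathbb{R}0$ is uniform here), and the transfer of membership at the end is only available because $\Sigma_1$-$\mathrm{ER}$ (Remark \ref{rem:strongExtRule}) allows the existential side conditions together with (1) to justify the extensionality step for $A$. Everything else reduces to straightforward bookkeeping within $\mathcal{U}^\omega$.
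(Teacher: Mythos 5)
Your proposal is correct, and each individual implication is sound; the only real difference from the paper is the arrangement of the cycle. The paper proves $(1)\Rightarrow(2)\Rightarrow(3)\Rightarrow(4)\Rightarrow(1)$, where $(1)\Rightarrow(2)$ is a one-line transfer of axiom (II) along $p=_XJ^A_\gamma x$ via extensionality, and the central computation — evaluating the resolvent at $x+_X\gamma u$, using the maximality hypothesis to force $\norm{x-_XJ^A_\gamma(x+_X\gamma u)}_X=0$, and then transferring the resulting membership — lives in $(2)\Rightarrow(3)$, finished by an appeal to the strong resolvent axiom rather than to full extensionality. You instead prove the five implications $(2)\Rightarrow(1)$, $(1)\Rightarrow(3)$, $(3)\Rightarrow(2)$, $(3)\Rightarrow(4)$, $(4)\Rightarrow(1)$, placing the same central computation inside $(1)\Rightarrow(3)$ and closing it with extensionality directly; your $(2)\Rightarrow(1)$ and $(3)\Rightarrow(2)$ are the comonotone analogues of the corresponding steps of Theorem \ref{thm:extEquivAccretive}, with $\tilde{\gamma}$ (via axioms (IV) and (V)) playing the role that $\gamma=1$ plays there. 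What the paper's route buys is economy (four implications, one of them trivial); what yours buys is a tighter parallel with the accretive case and an explicit demonstration that the uniqueness statement of Proposition \ref{pro:UFundTheorems}(3) upgrades the strong resolvent axiom to a full biconditional. Your attention to the $\Sigma_1$-$\mathrm{ER}$ side conditions (Remark \ref{rem:strongExtRule}) and to the reciprocal issue (Remark \ref{rem:division}) is exactly the care the formal setting requires, and the needed premise $\tilde{\gamma}>_\mathbb{R}0$, $\tilde{\rho}>_\mathbb{R}-\tilde{\gamma}$ is indeed provable from axioms (IV) and (V), so no gap arises there.
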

\begin{proof}
\begin{enumerate}
\item [$(1)\Rightarrow (2)$] Let $\gamma>0$ and $\tilde{\rho}>-\gamma$ and suppose $p=_XJ^A_\gamma x$. By axiom (II), we get
\[
\gamma^{-1}(x-J^A_\gamma x)\in A(J^A_\gamma x).
\]
Extensionality of $A$ gives $\gamma^{-1}(x-p)\in Ap$.
\item [$(2)\Rightarrow (3)$] Let $x,u$ be such that
\[
\forall y,v(v\in Ay\rightarrow\langle x-y,u-v\rangle\geq\tilde{\rho}\norm{u-v}^2).
\]
Let $\gamma>0$ be such that $\tilde{\rho}>-\gamma$. We have
\[
\gamma^{-1}((x+\gamma u)-J^A_\gamma(x+\gamma u))\in A(J^A_\gamma(x+\gamma u))
\]
by axiom (II). Then, we get
\begin{align*}
&-\gamma^{-1}\norm{x-J^A_\gamma(x+\gamma u)}^2\\
&\qquad\qquad=\langle x-J^A_\gamma(x+\gamma u),-\gamma^{-1}(x-J^A_\gamma(x+\gamma u))\rangle\\
&\qquad\qquad=\langle x-J^A_\gamma(x+\gamma u),u-\gamma^{-1}((x+\gamma u)-J^A_\gamma(x+\gamma u))\rangle\\
&\qquad\qquad\geq\tilde{\rho}\norm{u-\gamma^{-1}((x+\gamma u)-J^A_\gamma(x+\gamma u))}^2\\
&\qquad\qquad=\tilde{\rho}\gamma^{-2}\norm{x-J^A_\gamma(x+\gamma u)}^2.
\end{align*}
As $\tilde{\rho}>-\gamma$, we get $\norm{x-J^A_\gamma(x+\gamma u)}^2=0$, i.e. $x=_XJ^A_\gamma(x+\gamma u)$. By assumption of (2), we have
\[
\gamma^{-1}((x+\gamma u)-x)\in Ax
\]
which implies $u\in Ax$ via the extensionality rule.
\item [$(3)\Rightarrow (4)$] Let $x_n\to x$ and $y_n\to_X y$ as well as $y_n\in Ax_n$ for all $n$. Let $v,w$ be arbitrary with $v\in Aw$. Then, by axiom (III)
\[
\langle x_n-w,y_n-v\rangle\geq\tilde{\rho}\norm{y_n-v}^2
\]
for all $n$ and thus by taking the limit $\langle x-w,y-v\rangle\geq\tilde{\rho}\norm{y-v}^2$. By maximal $\rho$-comonotonicity, as $v,w$ were arbitrary, we have $y\in Ax$.
\item [$(4)\Rightarrow (1)$] Similar to $(6)\Rightarrow (1)$ of Theorem \ref{thm:extEquivAccretive}.
\end{enumerate}
\end{proof}
By formalizing Proposition 2.4 of \cite{Koh2021b}, we get a similar result on the fundamental resolvent equality as given in Proposition \ref{pro:resolventprop}, now for the system $\mathcal{U}^\omega$ (modulo some requirements on $\tilde{\rho}$).
\begin{proposition}\label{pro:resolventpropU}
$\mathcal{U}^\omega$ proves:
\begin{enumerate}
\item[(1)] $\begin{cases}\forall \gamma^1,\lambda^1,x^X\big(\gamma>_\mathbb{R}0\land\lambda>_\mathbb{R}0\land\tilde{\rho}>_\mathbb{R}-\gamma\\
\qquad\qquad\qquad\rightarrow J^A_\lambda x=_X J^A_{\gamma}\left(\frac{\gamma}{\lambda} x+_X\left(1-\frac{\gamma}{\lambda}\right)J^A_\gamma x\right)\big).\end{cases}$
\item[(2)] $\begin{cases}\forall \gamma^1,\lambda^1,x^X\Big(\gamma>_\mathbb{R}0\land\lambda>_\mathbb{R}0\land\tilde{\rho}\geq_\mathbb{R}-\frac{\gamma}{2}\\
\qquad\qquad\qquad\rightarrow\norm{x-_XJ^A_\gamma x}_X\leq\left(2+\frac{\gamma}{\lambda}\right)\norm{x-_XJ^A_\lambda x}_X\Big).\end{cases}$
\end{enumerate}
\end{proposition}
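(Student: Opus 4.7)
The plan is to follow, step-by-step, the proof of the analogous monotone result Proposition \ref{pro:resolventprop} above, replacing each appeal to a property of the monotone resolvent by the corresponding comonotone property from Proposition \ref{pro:UFundTheorems}. In that proof, the two crucial ingredients were (a) uniqueness of the resolvent (i.e.\ item (1) of Proposition \ref{pro:respropaccr}) and (b) nonexpansivity of the resolvent (item (3) of Proposition \ref{pro:respropaccr}). In $\mathcal{U}^\omega$ these are replaced by items (3) and (7) of Proposition \ref{pro:UFundTheorems}, which carry the additional hypotheses $\tilde{\rho}>_\mathbb{R}-\gamma$ and $\tilde{\rho}\geq_\mathbb{R}-\gamma/2$ respectively. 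The two items of the proposition are stated precisely with these hypotheses, and moreover $\tilde{\rho}\geq-\gamma/2$ with $\gamma>0$ implies $\tilde{\rho}>-\gamma$, so (1) is usable inside the proof of (2).

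For item (1), I would begin from axiom (II) applied at the index $\lambda$, yielding $\lambda^{-1}(x-_X J^A_\lambda x)\in A(J^A_\lambda x)$. Using $\gamma>0$ and $\lambda>0$, simple arithmetic rewriting (identical to the one used in the monotone case) shows that
\[
\lambda^{-1}(x-_XJ^A_\lambda x)=_X\tfrac{1}{\gamma}\left(\tfrac{\gamma}{\lambda}x+_X\left(1-\tfrac{\gamma}{\lambda}\right)J^A_\lambda x-_XJ^A_\lambda x\right).
\]
Transporting this equality through the predicate $\cdot\in A(J^A_\lambda x)$ requires the derivable rule $\Sigma_1$-$\mathrm{ER}$ from Remark \ref{rem:strongExtRule}, since the positivity assumptions on $\gamma,\lambda$ are not quantifier-free. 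Finally, I invoke the comonotone uniqueness statement of Proposition \ref{pro:UFundTheorems}(3), which is exactly where the hypothesis $\tilde{\rho}>-\gamma$ enters, to identify the left-hand side with $J^A_\gamma(\tfrac{\gamma}{\lambda}x+(1-\tfrac{\gamma}{\lambda})J^A_\lambda x)$.

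For item (2), I would apply a triangle inequality inserting $J^A_\lambda x$ between $x$ and $J^A_\gamma x$, then rewrite $J^A_\lambda x$ via item (1) (legal since $\tilde{\rho}\geq-\gamma/2$ entails $\tilde{\rho}>-\gamma$ when $\gamma>0$), and apply the nonexpansivity from Proposition \ref{pro:UFundTheorems}(7) — here the hypothesis $\tilde{\rho}\geq-\gamma/2$ is exactly what is needed — to bound
\[
\norm{J^A_\gamma x-_XJ^A_\lambda x}_X\leq_\mathbb{R}\left\vert 1-\tfrac{\gamma}{\lambda}\right\vert\norm{x-_XJ^A_\lambda x}_X.
\]
Since $\gamma/\lambda>0$, one has $\vert 1-\gamma/\lambda\vert\leq 1+\gamma/\lambda$, and adding the remaining $\norm{x-J^A_\lambda x}_X$ from the triangle inequality yields the stated factor $2+\gamma/\lambda$.

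The main obstacle is not conceptual but notational: one must verify that every appeal to extensionality of the operator $A$ (to rewrite arguments in $Ap$) and of $J^{\chi_A}$ (to rewrite $\gamma$ and its argument) is legitimate under $\Sigma_1$-$\mathrm{ER}$ rather than pure $\mathrm{QF}$-$\mathrm{ER}$, because the positivity premises $\gamma>0$, $\lambda>0$, and $\tilde{\rho}>-\gamma$ are only $\exists$-formulas after unfolding the reals (compare Remark \ref{rem:division}); and one must handle the reciprocals $\gamma^{-1},\lambda^{-1}$ via the binary term $(\cdot)^{-1}_{\cdot}$ paired with the modulus assumptions, as discussed for the analogous proof in $\mathcal{T}^\omega$. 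Beyond that bookkeeping, the proof is a direct transcription of the monotone template.
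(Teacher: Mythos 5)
Your proposal is correct and takes essentially the same approach as the paper, whose proof consists precisely of the remark that one repeats the argument for Proposition \ref{pro:resolventprop} with Proposition \ref{pro:UFundTheorems}(3) in place of Proposition \ref{pro:respropaccr}(1) and Proposition \ref{pro:UFundTheorems}(7) in place of Proposition \ref{pro:respropaccr}(3). Your added observation that $\tilde{\rho}\geq_\mathbb{R}-\gamma/2$ together with $\gamma>_\mathbb{R}0$ gives $\tilde{\rho}>_\mathbb{R}-\gamma$, so that item (1) is legitimately available in the proof of item (2), is a detail the paper leaves implicit but is exactly right.
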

\begin{proof}
For items (1) and (2), the proof is essentially the same as for Proposition \ref{pro:resolventprop}. One just replaces Proposition \ref{pro:respropaccr}, (1) by Proposition \ref{pro:UFundTheorems}, (3) and Proposition \ref{pro:respropaccr}, (3) by Proposition \ref{pro:UFundTheorems}, (7) respectively.
\end{proof}
\begin{remark}\label{rem:UstarProp}
The above propositions also hold for $\mathcal{U}^{*\omega}$ in a slightly modified form:
\begin{enumerate}
\item[(1)] Proposition \ref{pro:UFundTheorems}, items (3) - (6) hold with $\tilde{\rho}>_\mathbb{R}-\gamma/2$ instead of $\tilde{\rho}>_\mathbb{R}-\gamma$. Items (7) and (8) hold with $\tilde{\rho}>_\mathbb{R}-\gamma/2$ instead of $\tilde{\rho}\geq_\mathbb{R}-\gamma/2$. Item (9) stays valid unchanged.
\item[(2)] Theorem \ref{thm:comonExtEquiv} holds with $\tilde{\rho}>_\mathbb{R}-\gamma/2$ instead of $\tilde{\rho}>_\mathbb{R}-\gamma$ for the strong resolvent axiom.
\item[(3)] Proposition \ref{pro:resolventpropU}, item (1) holds with $\tilde{\rho}>_\mathbb{R}-\lambda/2\land\tilde{\rho}>_\mathbb{R}-\gamma/2$ instead of $\tilde{\rho}>_\mathbb{R}-\gamma$. Item (2) holds with $\tilde{\rho}>_\mathbb{R}-\lambda/2\land\tilde{\rho}>_\mathbb{R}-\gamma/2$ instead of $\tilde{\rho}\geq_\mathbb{R}-\gamma/2$.
\end{enumerate}
\end{remark}
\begin{remark}\label{rem:resolventpropU}
We again can consider alternative axiomatizations for the theories $\mathcal{U}^\omega$ and $\mathcal{U}^{*\omega}$:
\begin{enumerate}
\item[(1)] In any variant of the theory $\mathcal{U}^\omega$ where the $\rho$-comonotonicity axiom for $A$ is replaced by
\begin{enumerate}
\item[i.] $J^A_\gamma$ is $\alpha$-conically nonexpansive (and unique) for $\gamma>0$, $\tilde{\rho}>-\gamma$ with $\alpha=\frac{1}{2(\tilde{\rho}/\gamma+1)}$, i.e.
\begin{align*}
\begin{cases}
\forall \gamma^1,x^X,y^X\Big(\gamma>_\mathbb{R}0\land\tilde{\rho}>_\mathbb{R}-\gamma\\
\qquad\qquad\rightarrow2\alpha\langle J^A_{\gamma}x-_XJ^A_{\gamma}y,(x-_X J^A_{\gamma}x)-_X(y-_X J^A_{\gamma}y)\rangle_X\\
\qquad\qquad\qquad\qquad \geq_\mathbb{R}\left(1-2\alpha\right)\norm{(x-_XJ^A_{\gamma}x)-_X(y-_XJ^A_{\gamma}y)}_X^2\Big),\\
\forall p^X,x^X,\gamma^1\big(\gamma>_\mathbb{R}0\land \tilde{\rho}>_\mathbb{R}-\gamma\land \gamma^{-1}(x-_Xp)\in Ap\rightarrow p=_X J^A_{\gamma} x\big),
\end{cases}
\end{align*}
\item[or ii.] $J^A_\gamma$ is $\alpha$-averaged (and unique) for $\gamma>0$, $\tilde{\rho}>-\gamma/2$  with $\alpha=\frac{1}{2(\tilde{\rho}/\gamma+1)}$, i.e.
\begin{align*}
\begin{cases}
\forall \gamma^1,x^X,y^X\Big(\gamma>_\mathbb{R}0\land\tilde{\rho}>_\mathbb{R}-\gamma/2\\
\qquad\qquad\rightarrow \left(1-\alpha\right)\norm{(x-_XJ^A_\gamma x)-_X(y-_XJ^A_\gamma y)}^2_X\\
\qquad\qquad\qquad\qquad\leq_\mathbb{R}\alpha\left(\norm{x-_Xy}^2_X-\norm{J^A_\gamma x-_XJ^A_\gamma y}^2_X\right)\Big),\\
\forall p^X,x^X,\gamma^1\big(\gamma>_\mathbb{R}0\land \tilde{\rho}>_\mathbb{R}-\gamma/2\land \gamma^{-1}(x-_Xp)\in Ap\rightarrow p=_X J^A_{\gamma} x\big),
\end{cases}
\end{align*}
\end{enumerate}
one can actually prove the $\rho$-comonotonicity axiom. These equivalences also stem from the work \cite{BMW2020} and arguments along the lines discussed there can be immediately formalized which is why we omit further details.
\item[(2)] The same holds for the theory $\mathcal{U}^{*\omega}$ if in item (i), $\tilde{\rho}>_\mathbb{R}-\gamma$ is replaced by $\tilde{\rho}>_\mathbb{R}-\gamma/2$.
\end{enumerate}
This formalizes parts of Theorem \ref{thm:comonotoneEquivalence}.
\end{remark}
\subsection{Treating operators with partial resolvents}
Some applications of accretive or (generalized) monotone operators do not require full maximality but only impose certain so-called \emph{range conditions} on the operator which force the domains of the resolvents to be 'large enough' (which will be discussed in some detail later on). To accommodate for such operators, we now discuss how the previous approach needs to be modified to treat partial resolvents.

\smallskip

We opt for the following strategy: We still use a constant $J^{\chi_A}$ of type $X(X)(1)$. Instead of specifying the behavior of this constant on any point $x$ as, e.g., done by 
\[
\forall \gamma^1,x^X\left(\gamma>_\mathbb{R}0\rightarrow \gamma^{-1}(x-_XJ^A_{\gamma}x)\in A(J^A_{\gamma}x)\right),
\]
we only specify it on its domain in the sense of
\[
\forall \gamma^1,x^X\left(\gamma>_\mathbb{R}0\land x\in\mathrm{dom}J^A_\gamma\rightarrow \gamma ^{-1}(x-_XJ^A_{\gamma}x)\in A(J^A_{\gamma}x)\right).
\]
In aiming for bound extraction theorems, this is of course only a viable option if $x\in\mathrm{dom}J^A_\gamma$ can be suitably represented such that the resulting axiom has a monotone functional interpretation.

Total or not, the domain of the resolvent always fulfills, as discussed in Section \ref{sec:resolventprelim}, the equation
\[
\mathrm{dom}J^A_\gamma=\mathrm{ran}(Id+\gamma A)
\]
and the latter is definable via
\begin{align*}
x\in\mathrm{ran}(Id+\gamma A)&\text{ iff }\exists y\left(x\in y+\gamma Ay\right)\\
&\text{ iff }\exists y,z\left(z\in Ay\land x=y+\gamma z\right)\\
&\text{ iff }\exists y,z\left(z\in Ay\land z=\frac{1}{\gamma}(x-y)\right).
\end{align*}
Now, the hidden universal quantifier in $z=\frac{1}{\gamma}(x-y)$, if formalized via $=_X$, can't be majorized and thus this version can't be freely added as an implicative assumption. However, we can opt for the weaker intensional version
\[
\exists y^X\left(\frac{1}{\gamma}(x-_Xy)\in Ay\right).
\]
We will later see that the stronger version is connected with extensionality of $A$, similar as in the case of the previous strong resolvent axiom and thus has to be unprovable. Therefore, with that choice we obtain the theories\label{th:PartOp} 
\begin{enumerate}
\item[(1)] $\mathcal{A}^\omega[X,\norm{\cdot},A,J^A]_p$,
\item[(2)] $\mathcal{A}^\omega[X,\langle\cdot,\cdot\rangle,A,J^A]_p$,
\item[(3)] $\mathcal{A}^\omega[X,\langle\cdot,\cdot\rangle,A,J^A,\tilde{\rho}]_p$,
\item[(4)] $\mathcal{A}^\omega[X,\langle\cdot,\cdot\rangle,A,J^A,\tilde{\rho}^*]_p$,
\end{enumerate}
from the previous ones by replacing the axiom
\[
\forall \gamma^1,x^X\left(\gamma>_\mathbb{R}0\rightarrow\gamma^{-1}(x-_XJ^A_{\gamma}x)\in A(J^A_{\gamma}x)\right)\tag{II}
\]
from before with
\[
\forall \gamma^1,x^X\left(\gamma>_\mathbb{R}0\land \exists y^X\left(\gamma^{-1}(x-_Xy)\in Ay\right)\rightarrow \gamma^{-1}(x-_XJ^A_{\gamma}x)\in A(J^A_{\gamma}x)\right).\tag{II$'$}
\]
In the case of $\mathcal{A}^\omega[X,\langle\cdot,\cdot\rangle,A,J^A,\tilde{\rho}^*]_p$, we replace the previous
\[
\forall \gamma^1,x^X\left(\tilde{\rho}>_\mathbb{R}-\gamma/2\land\gamma>_\mathbb{R}0\rightarrow \gamma^{-1}(x-_XJ^A_{\gamma}x)\in A(J^A_{\gamma}x)\right)\tag{II$_1$}
\]
by
\begin{align*}
&\forall \gamma^1,x^X\big(\tilde{\rho}>_\mathbb{R}-\gamma/2\land\gamma>_\mathbb{R}0\\
&\qquad\qquad\qquad\land \exists y^X\left(\gamma^{-1}(x-_Xy)\in Ay\right)\rightarrow\gamma^{-1}(x-_XJ^A_{\gamma}x)\in A(J^A_{\gamma}x)\big).\tag{II$'_1$}
\end{align*}
The constant $c_X$, which was previously only used to designate an arbitrary anchor point for majorization, is now used to actually designate a common element of the domains of all $J^A_\gamma$ (with $\tilde{\rho}>-\gamma/2$ in the case of $\mathcal{A}^\omega[X,\langle\cdot,\cdot\rangle,A,J^A,\tilde{\rho}^*]_p$) and for that we add the corresponding defining axiom
\[
\forall\gamma^1\left(\gamma>_\mathbb{R}0\rightarrow\gamma^{-1}(c_X-_XJ^A_{\gamma}c_X)\in A(J^A_{\gamma}c_X)\right)\tag{V}
\]
which we vary, in the case of $\mathcal{A}^\omega[X,\langle\cdot,\cdot\rangle,A,J^A,\tilde{\rho}^*]_p$, to
\[
\forall\gamma^1\left(\gamma>_\mathbb{R}0\land\tilde{\rho}>_\mathbb{R}-\gamma/2\rightarrow \gamma^{-1}(c_X-_XJ^A_{\gamma}c_X)\in A(J^A_{\gamma}c_X)\right).\tag{V$_1$}
\]
This assumption is easily satisfiable in many applications as the operator $A$ is often assumed to have a non-empty domain and that it satisfies a range condition like
\[
\mathrm{dom} A\subseteq\bigcap_{\gamma>0}\mathrm{ran}(Id+\gamma A).
\]

We use the shorthands $\mathcal{V}^\omega_p$, $\mathcal{T}^\omega_p$, $\mathcal{U}^\omega_p$ and $\mathcal{U}^{*\omega}_p$ for the systems (1) - (4) and from now on use the abbreviation
\[
x\in\mathrm{dom}(J^A_\gamma):=\exists y^X\left(\gamma^{-1}(x-_Xy)\in Ay\right).
\]
We obtain the following proposition as an immediate generalization of the previous Propositions \ref{pro:respropaccr}, \ref{pro:accImplMono} and \ref{pro:UFundTheorems}.
\begin{proposition}\label{pro:partialFundProp}
$\mathcal{V}^\omega_p$ proves:
\begin{enumerate}
\item[(1)] $J^A_\gamma$ is unique for any $\gamma>0$, i.e. 
\[
\forall \gamma^1,p^X,x^X\left(\gamma>_\mathbb{R}0\land\gamma^{-1}(x-_Xp)\in Ap\rightarrow p=_X J^A_{\gamma} x\right).
\]
\item[(2)] $J^A_\gamma$ is firmly nonexpansive for any $\gamma>0$ (on its domain), i.e.
\begin{align*}
&\forall\gamma^1,r^1,x^X,y^X\Big(\gamma>_\mathbb{R}0\land x\in\mathrm{dom}(J^A_\gamma)\land y\in\mathrm{dom}(J^A_\gamma)\land r>_\mathbb{R}0\\
&\qquad\quad\rightarrow\norm{J^A_\gamma x-_XJ^A_\gamma y}_X\leq_\mathbb{R}\norm{r(x-_Xy)+_X(1-r)(J^A_\gamma x-_XJ^A_\gamma y)}_X\Big).
\end{align*}
\item[(3)] $J^A_\gamma$ is nonexpansive for any $\gamma>0$ (on its domain), i.e.
\begin{align*}
&\forall\gamma^1,x^X,y^X\Big(\gamma>_\mathbb{R}0\land x\in\mathrm{dom}(J^A_\gamma)\land y\in\mathrm{dom}(J^A_\gamma)\\
&\qquad\qquad\qquad\qquad\rightarrow \norm{x-_Xy}_X\geq_\mathbb{R}\norm{J^A_\gamma x-_XJ^A_\gamma y}_X\Big).
\end{align*}
\item[(4)] $J^A$ is extensional in both arguments (on its domain), i.e.
\begin{align*}
\begin{cases}
\forall \gamma^1>_\mathbb{R}0,x^X,{x'}^X\big(x\in\mathrm{dom}(J^A_\gamma)\\
\qquad\qquad\qquad\land x'\in\mathrm{dom}(J^A_\gamma)\land x=_Xx'\rightarrow J^A_\gamma x=_XJ^A_{\gamma'}x'\big),\\
\forall \gamma^1>_\mathbb{R}0,{\gamma'}^1>_\mathbb{R}0,x^X\big(x\in\mathrm{dom}(J^A_\gamma)\\
\qquad\qquad\qquad\land x\in\mathrm{dom}(J^A_{\gamma'})\land \gamma=_\mathbb{R}\gamma'\rightarrow J^A_\gamma x=_XJ^A_{\gamma'}x\big).
\end{cases}
\end{align*}
\end{enumerate}
Further, $\mathcal{T}^\omega_p$ proves:
\begin{enumerate}
\item[(5)] $A$ is monotone, i.e.
\[
\forall x^X,y^X,u^X,v^X\left(u\in Ax\land v\in Ay\rightarrow\langle x-_Xy,u-_Xv\rangle_X\geq_\mathbb{R} 0\right).
\]
\item[(6)] $J^A_\gamma$ satisfies the alternative notion of firm nonexpansivity for any $\gamma>0$ (on its domain), i.e.
\begin{align*}
&\forall\gamma^1,x^X,y^X\Big(\gamma>_\mathbb{R}0\land x\in\mathrm{dom}(J^A_\gamma)\land y\in\mathrm{dom}(J^A_\gamma)\\
&\qquad\qquad\qquad\rightarrow \langle x-_Xy,J^A_\gamma x-_XJ^A_\gamma y\rangle_X\geq_\mathbb{R}\norm{J^A_\gamma x-_XJ^A_\gamma y}^2_X\Big).
\end{align*}
\end{enumerate}
Lastly, $\mathcal{U}^\omega_p$ proves:
\begin{enumerate}
\item[(7)] $J^A_\gamma$ is single-valued if $\tilde{\rho}>-\gamma$, i.e. 
\[
\forall p^X,x^X,\gamma^1\left(\gamma>_\mathbb{R}0\land \tilde{\rho}>_\mathbb{R}-\gamma\land\gamma^{-1}(x-_Xp)\in Ap\rightarrow p=_X J^A_{\gamma} x\right).
\]
\item[(8)] $J^A$ is extensional in both arguments (on its domain) if $\tilde{\rho}>-\gamma$, i.e. 
\[
\begin{cases}
\forall \gamma^1>_\mathbb{R}0,x^X,{x'}^X\big(\tilde{\rho}>_\mathbb{R}-\gamma\land x\in\mathrm{dom}(J^A_\gamma)\\
\qquad\qquad\qquad\land x'\in\mathrm{dom}(J^A_\gamma)\land x=_Xx'\rightarrow J^A_\gamma x=_XJ^A_{\gamma}x'\big)\\
\forall \gamma^1>_\mathbb{R}0,{\gamma'}^1>_\mathbb{R}0,x^X\big(\tilde{\rho}>_\mathbb{R}-\gamma\land x\in\mathrm{dom}(J^A_\gamma)\\
\qquad\qquad\qquad\land x\in\mathrm{dom}(J^A_{\gamma'})\land \gamma=_\mathbb{R}\gamma'\rightarrow J^A_\gamma x=_XJ^A_{\gamma'}x\big).
\end{cases}
\]
\item[(9)] $J^A_\gamma$ satisfies the alternative notion of being $\alpha$-conically nonexpansive for $\alpha=\frac{1}{2(\tilde{\rho}/\gamma+1)}$ if $\tilde{\rho}>-\gamma$ (on its domain), i.e.
\begin{align*}
&\forall \gamma^1,x^X,y^X\bigg(\gamma>_\mathbb{R}0\land\tilde{\rho}>_\mathbb{R}-\gamma\land x\in\mathrm{dom}(J^A_\gamma)\land y\in\mathrm{dom}(J^A_\gamma)\\
&\qquad\qquad\rightarrow \frac{2}{2(\tilde{\rho}/\gamma+1)}\langle J^A_{\gamma}x-_XJ^A_{\gamma}y,(x-_X J^A_{\gamma}x)-_X(y-_X J^A_{\gamma}y)\rangle_X\\
&\qquad\qquad\qquad\quad\geq_\mathbb{R}\left(1-\frac{2}{2(\tilde{\rho}/\gamma+1)}\right)\norm{(x-_XJ^A_{\gamma}x)-_X(y-_XJ^A_{\gamma}y)}_X^2\bigg).
\end{align*}
\item[(10)] $J^A_\gamma$ is $\alpha$-conically nonexpansive for $\alpha=\frac{1}{2(\tilde{\rho}/\gamma+1)}$ if $\tilde{\rho}>-\gamma$, i.e. $\alpha\in (0,\infty)$ and
\[
(1-\alpha^{-1})Id+\alpha^{-1}J^A_\gamma\text{ is nonexpansive (on its domain)}.
\]
\item[(11)] $J^A_\gamma$ is nonexpansive if $\tilde{\rho}\geq-\gamma/2$ (on its domain), i.e.
\begin{align*}
&\forall\gamma^1,x^X,y^X\Big(\gamma>_\mathbb{R}0\land\tilde{\rho}\geq_\mathbb{R}-\gamma/2\land x\in\mathrm{dom}(J^A_\gamma)\\
&\qquad\qquad\qquad\land y\in\mathrm{dom}(J^A_\gamma)\rightarrow\norm{J^A_\gamma x-_XJ^A_\gamma y}_X\leq\norm{x-_Xy}_X\Big).
\end{align*}
\item[(12)] $J^A_\gamma$ satisfies the alternative notion of being $\alpha$-averaged for $\alpha=\frac{1}{2(\tilde{\rho}/\gamma+1)}$ if $\tilde{\rho}\geq-\gamma/2$ (on its domain), i.e.
\begin{align*}
&\forall \gamma^1,x^X,y^X\bigg(\gamma>_\mathbb{R}0\land\tilde{\rho}\geq_\mathbb{R}-\gamma/2\land x\in\mathrm{dom}(J^A_\gamma)\land y\in\mathrm{dom}(J^A_\gamma)\\
&\qquad\qquad\rightarrow \left(1-\frac{1}{2(\tilde{\rho}/\gamma+1)}\right)\norm{(x-_XJ^A_\gamma x)-_X(y-_XJ^A_\gamma y)}^2_X\\
&\qquad\qquad\qquad\quad\leq_\mathbb{R}\frac{1}{2(\tilde{\rho}/\gamma+1)}\left(\norm{x-_Xy}^2_X-_\mathbb{R}\norm{J^A_\gamma x-_XJ^A_\gamma y}^2_X\right)\bigg).
\end{align*}
\item[(13)] $J^A_\gamma$ is $\alpha$-averaged for $\alpha=\frac{1}{2(\tilde{\rho}/\gamma+1)}$ if $\tilde{\rho}>-\gamma/2$, i.e. $\alpha\in (0,1)$ and
\[
(1-\alpha^{-1})Id+\alpha^{-1}J^A_\gamma\text{ is nonexpansive (on its domain)}.
\]
\end{enumerate}
The items (7) - (13) hold also for $\mathcal{U}^{*\omega}_p$ in a revised form: as before, any $\tilde{\rho}>_\mathbb{R}-\gamma$ or $\tilde{\rho}\geq_\mathbb{R}-\gamma/2$ needs to be replaced with $\tilde{\rho}>_\mathbb{R}-\gamma/2$.
\end{proposition}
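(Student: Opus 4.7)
The plan is to replay the proofs of Propositions \ref{pro:respropaccr}, \ref{pro:accImplMono}, and \ref{pro:UFundTheorems} almost verbatim, with axiom (II) replaced throughout by its partial counterpart (II$'$) (respectively (II$_1'$) for $\mathcal{U}^{*\omega}_p$). The only real work is to verify, at each invocation of the resolvent axiom, that the point being fed to $J^A_\gamma$ lies in $\mathrm{dom}(J^A_\gamma)$ in the sense of the newly introduced abbreviation
\[
x \in \mathrm{dom}(J^A_\gamma) := \exists y^X\left(\gamma^{-1}(x -_X y) \in Ay\right),
\]
so that the premise of (II$'$) is discharged.

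For item (1), although the statement carries no explicit domain hypothesis, the very assumption $\gamma^{-1}(x -_X p) \in Ap$ provides a witness $y := p$ for $x \in \mathrm{dom}(J^A_\gamma)$; hence (II$'$) fires and delivers $\gamma^{-1}(x -_X J^A_\gamma x) \in A(J^A_\gamma x)$, after which accretivity (III) finishes the argument exactly as in Proposition \ref{pro:respropaccr}(1). For items (2), (3), (4), (6), (8), (9), (11) and (12), the explicit hypotheses $x \in \mathrm{dom}(J^A_\gamma)$ and $y \in \mathrm{dom}(J^A_\gamma)$ (and analogously for the two slots of $J^A$ in the extensionality statements) are precisely what is required for each invocation of (II$'$), after which the prior proofs of firm nonexpansivity, nonexpansivity, extensionality, and the alternative notions of ($\alpha$-)conically/averaged nonexpansivity carry over without change. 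Items (5) (monotonicity of $A$) and (7) (single-valuedness under $\tilde{\rho} > -\gamma$) invoke (II) nowhere: (5) uses only Proposition \ref{pro:accImplMono}(1) in combination with the accretivity axiom (III), while (7) exploits the comonotonicity axiom (III) of $\mathcal{U}^\omega_p$ exactly as in Proposition \ref{pro:UFundTheorems}(3). Items (10) and (13) are just reformulations of (9) and (12) via the well-known equivalence ``$(1-\alpha^{-1})\mathrm{Id}+\alpha^{-1}J^A_\gamma$ is nonexpansive on its domain'', inherited from the total-resolvent case.

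The only mild subtlety to double-check, and what I expect to be the main bookkeeping obstacle, is that several of the original proofs manipulate auxiliary points built from the data (for instance the vector $s\gamma^{-1}(x-y)+\ldots$ in Proposition \ref{pro:respropaccr}(2), or the terms appearing in the $\gamma$-extensionality argument of \ref{pro:respropaccr}(4)). One must verify that these manipulations happen \emph{after} the two invocations of (II) at the original arguments $x$ and $y$, so that we never need to feed a freshly constructed vector into $J^A_\gamma$ and re-establish its membership in the domain; inspection of each previous proof confirms that this is indeed the case. Finally, the corresponding statements for $\mathcal{U}^{*\omega}_p$ follow by the same arguments after the systematic replacement of the conditions $\tilde{\rho} >_\mathbb{R} -\gamma$ and $\tilde{\rho} \geq_\mathbb{R} -\gamma/2$ by $\tilde{\rho} >_\mathbb{R} -\gamma/2$, in line with Remark \ref{rem:UstarProp}.
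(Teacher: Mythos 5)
Your proposal matches the paper's proof, which likewise just replays Propositions \ref{pro:respropaccr}, \ref{pro:accImplMono} and \ref{pro:UFundTheorems} with (II) replaced by (II$'$) (resp.\ (II$'_1$)), the domain premises being discharged either by the explicit hypotheses or, for the uniqueness items, by the assumption $\gamma^{-1}(x-_Xp)\in Ap$ itself. One small slip: item (7) \emph{does} invoke the resolvent axiom (the proof of Proposition \ref{pro:UFundTheorems}(3) needs $\gamma^{-1}(x-_XJ^A_\gamma x)\in A(J^A_\gamma x)$ before comonotonicity can compare $p$ with $J^A_\gamma x$), but your item-(1) remedy --- the hypothesis itself witnesses $x\in\mathrm{dom}(J^A_\gamma)$ --- covers this case identically.
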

\begin{proof}
The proofs from before, that is those of Propositions \ref{pro:respropaccr} and \ref{pro:accImplMono} as well as of Proposition \ref{pro:UFundTheorems}, carry over to the setting here. In any case where the axiom (II) was applied there, the replacement axiom (II$'$) is now applicable as we always suitably assume $x\in\mathrm{dom}(J^A_\gamma)$. Similarly for (II$_1$) and (II$'_1$).
\end{proof}
Next, we move to a characterization of extensionality for the various systems. The key new element here is that extensionality is equivalent to a combination of the strong resolvent axiom known from the total systems (now, of course, in a partial variant) \emph{together} with the version of the resolvent axiom involving the strong form of the formalization of the domain from the introduction of the partial systems.
\begin{theorem}\label{thm:extequivVT}
Over $\mathcal{V}^\omega_p$, the following are equivalent:
\begin{enumerate}
\item[(1)] Extensionality of $A$, i.e.
\[
\forall x^X,y^X,{x'}^X,{y'}^{X}\left( x=_X x'\land y=_Xy'\rightarrow \chi_Axy=_0 \chi_Ax'y'\right)
\]
\item[(2)] The strong domain axiom, i.e.
\begin{align*}
&\forall \gamma^1,x^X\big(\gamma>_\mathbb{R}0\land \exists y^X,z^X(z\in Ay\land x=_Xy+_X\gamma z)\\
&\qquad\qquad\qquad\qquad\qquad\rightarrow \gamma^{-1}(x-_XJ^A_{\gamma}x)\in A(J^A_{\gamma}x)\big),
\end{align*}
together with the strong partial resolvent axiom, i.e.
\[
\forall x^X,p^X,\gamma^1\left(\gamma>_\mathbb{R}0\land x\in\mathrm{dom}(J^A_\gamma)\land p=_XJ^A_\gamma x\rightarrow \vert\gamma\vert^{-1}(x-_Xp)\in Ap\right).
\]
\item[(3)] The strong domain axiom for $\gamma=1$, i.e.
\[
\forall x^X\left(\exists y^X,z^X(z\in Ay\land x=_Xy+_X z)\rightarrow (x-_XJ^A_{1}x)\in A(J^A_{1}x)\right),
\]
together with the strong partial resolvent axiom for $\gamma=1$, i.e.
\[
\forall x^X,p^X\left(x\in\mathrm{dom}(J^A_1)\land p=_XJ^A_{1} x\rightarrow (x-_Xp)\in Ap\right).
\]
\end{enumerate}

Further, the strong domain axiom implies strong extensionality of $J^A$, i.e.
\begin{align*}
\forall x^X,{x'}^X,\gamma^1,{\gamma'}^1\left(\gamma>_\mathbb{R}0\land x\in\mathrm{dom}(J^A_\gamma)\land\gamma=_\mathbb{R}\gamma'\land x=_Xx'\rightarrow J^A_\gamma x=_XJ^A_{\gamma'}x'\right).
\end{align*}
The same statement holds for $\mathcal{T}^\omega_p$.

Over $\mathcal{U}^\omega_p$, extensionality is similarly equivalent to the strong domain axiom for all $\gamma>0$ with $\tilde{\rho}>-\gamma$, i.e.
\begin{align*}
&\forall \gamma^1,x^X\big(\gamma>_\mathbb{R}0\land\tilde{\rho}>_\mathbb{R}-\gamma\\
&\qquad\qquad\qquad\land \exists y^X,z^X(z\in Ay\land x=_Xy+_X \gamma z)\rightarrow \gamma^{-1}(x-_XJ^A_{\gamma}x)\in A(J^A_{\gamma}x)\big),
\end{align*}
together with the strong partial resolvent axiom for all $\gamma>0$ with $\tilde{\rho}>-\gamma$, i.e.
\[
\forall x^X,p^X\left(\gamma>_\mathbb{R}0\land\tilde{\rho}>_\mathbb{R}-\gamma\land p=_XJ^A_{\gamma} x\rightarrow \gamma^{-1}(x-_Xp)\in Ap\right),
\]
and as before, the strong domain axiom implies strong extensionality of $J^A_\gamma$ if $\tilde{\rho}>-\gamma$, i.e.
\begin{align*}
&\forall x^X,{x'}^X,\gamma^1,{\gamma'}^1\big(\gamma>_\mathbb{R}0\land\tilde{\rho}>_\mathbb{R}-\gamma\land  x\in\mathrm{dom}(J^A_\gamma)\\
&\qquad\qquad\qquad\qquad\land\gamma=_\mathbb{R}\gamma'\land x=_Xx'\rightarrow J^A_\gamma x=_XJ^A_{\gamma'}x'\big).
\end{align*}
The claims hold for $\mathcal{U}^{*\omega}_p$ if all the assumptions $\tilde{\rho}>_\mathbb{R}-\gamma$ are replaced by $\tilde{\rho}>_\mathbb{R}-\gamma/2$, respectively.
\end{theorem}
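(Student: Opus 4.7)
The plan is to establish the cycle $(1) \Rightarrow (2) \Rightarrow (3) \Rightarrow (1)$, proving the auxiliary fact that the strong domain axiom implies strong extensionality of $J^A$ along the way, as this is precisely what allows the cycle to close. The main obstacle will be $(3) \Rightarrow (1)$, where extensionality of $A$ must be extracted purely through the resolvent: the weak $A$-extensionality rule from Remark \ref{rem:strongExtRule} can only be applied to equalities of terms which are provable without hypothesis, so one cannot directly substitute $x'$ for $x$ or $y'$ for $y$ inside $\chi_A$ and must route the argument through $J^A_1$.

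For $(1) \Rightarrow (2)$, assume extensionality of $A$. Given $z \in Ay$ and $x =_X y + \gamma z$ with $\gamma >_\mathbb{R} 0$, extensionality of the vector space operations yields $\gamma^{-1}(x-y) =_X z$, whence extensionality of $A$ gives $\gamma^{-1}(x-y) \in Ay$; this witnesses $x \in \mathrm{dom}(J^A_\gamma)$ in the sense of axiom $(\mathrm{II}')$, which then produces the desired $\gamma^{-1}(x - J^A_\gamma x) \in A(J^A_\gamma x)$. The strong partial resolvent axiom follows in the same way: $(\mathrm{II}')$ gives $\gamma^{-1}(x - J^A_\gamma x) \in A(J^A_\gamma x)$, and extensionality of $A$ transports this to $\gamma^{-1}(x-p) \in Ap$ whenever $p =_X J^A_\gamma x$. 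The implication $(2) \Rightarrow (3)$ is the trivial specialization to $\gamma = 1$.

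For strong extensionality of $J^A$ from the strong domain axiom, assume $\gamma >_\mathbb{R} 0$, $x \in \mathrm{dom}(J^A_\gamma)$, $\gamma =_\mathbb{R} \gamma'$ and $x =_X x'$. Axiom $(\mathrm{II}')$ produces $\gamma^{-1}(x - J^A_\gamma x) \in A(J^A_\gamma x)$, and we have $x' =_X J^A_\gamma x + \gamma' \cdot \gamma^{-1}(x - J^A_\gamma x)$, so the strong domain axiom applied with these witnesses yields $\gamma'^{-1}(x' - J^A_{\gamma'} x') \in A(J^A_{\gamma'} x')$. Accretivity (axiom (III)) applied to these two inclusions with $\lambda = \gamma$, and simplified inside the norm using $\gamma = \gamma'$ and $\gamma > 0$, collapses to $\norm{J^A_\gamma x - J^A_{\gamma'} x'}_X \leq_\mathbb{R} \norm{x - x'}_X = 0$, so $J^A_\gamma x =_X J^A_{\gamma'} x'$. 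In $\mathcal{U}^\omega_p$ (resp.\ $\mathcal{U}^{*\omega}_p$) the same argument uses $\rho$-comonotonicity in place of accretivity, along the lines of Proposition \ref{pro:UFundTheorems}(4), and requires $\tilde{\rho} >_\mathbb{R} -\gamma$ (resp.\ $\tilde{\rho} >_\mathbb{R} -\gamma/2$) to force the norm to vanish.

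For $(3) \Rightarrow (1)$, let $x =_X x'$, $y =_X y'$ and $y \in Ax$. Since $(x+y) - x =_X y$ provably without hypothesis, the weak $A$-extensionality rule yields $(x+y) - x \in Ax$, so uniqueness (Proposition \ref{pro:partialFundProp}(1)) gives $x =_X J^A_1(x+y)$. As $x + y \in \mathrm{dom}(J^A_1)$ (witnessed by $x$) and $x+y =_X x'+y'$, strong extensionality of $J^A_1$ gives $J^A_1(x+y) =_X J^A_1(x'+y')$, whence $x' =_X J^A_1(x'+y')$. The strong domain axiom for $\gamma = 1$, applied to $x'+y'$ with witnesses $y^* := x$ and $z^* := y$ (valid because $y \in Ax$ and $x+y =_X x'+y'$), shows $x'+y' \in \mathrm{dom}(J^A_1)$ in the weak sense; the strong partial resolvent axiom for $\gamma = 1$ then produces $(x'+y') - x' \in Ax'$, and a final application of the weak $A$-extensionality rule via the provable equality $(x'+y') - x' =_X y'$ yields $y' \in Ax'$. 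The statements for $\mathcal{T}^\omega_p$, $\mathcal{U}^\omega_p$ and $\mathcal{U}^{*\omega}_p$ follow by the same template, with accretivity replaced by monotonicity or $\rho$-comonotonicity and the matching conditions on $\tilde{\rho}$ carried through.
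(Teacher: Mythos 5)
Your proof is correct and follows essentially the same route as the paper: the cycle $(1)\Rightarrow(2)\Rightarrow(3)\Rightarrow(1)$, with $(3)\Rightarrow(1)$ passing through $x=_XJ^A_1(x+y)$, the transfer to $x'=_XJ^A_1(x'+y')$, and the strong partial resolvent axiom, exactly as in the paper's argument. The only (immaterial) differences are that you witness the strong domain axiom via $J^A_\gamma x$ itself rather than the witness supplied by $x\in\mathrm{dom}(J^A_\gamma)$, and you invoke the derived strong extensionality of $J^A_1$ where the paper uses the on-domain extensionality of Proposition \ref{pro:partialFundProp}(4).
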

\begin{proof}
We start with the equivalences:
\begin{enumerate}
\item [$(1)\Rightarrow (2)$] Let $A$ be extensional. We first show the strong domain axiom. Let $\gamma> 0$ and suppose $\exists y,z(z\in Ay\land x=y+\gamma z)$. Then $\exists y(\gamma^{-1}(x-y)\in Ay)$ by extensionality of $A$. Thus $x\in\mathrm{dom}(J^A_\gamma)$ and therefore (II$'$) yields $\vert\gamma\vert^{-1}(x-J^A_\gamma x)\in A(J^A_\gamma x)$.

Now, let $\gamma> 0$ and suppose $p=_XJ^A_\gamma x$ as well as $x\in\mathrm{dom}(J^A_\gamma)$. The latter gives $\gamma^{-1}(x-J^A_\gamma x)\in A(J^A_\gamma x)$ by axiom (II$'$). Extensionality of $A$ gives $\gamma^{-1}(x-p)\in Ap$.
\item [$(2)\Rightarrow (3)$] Clear by the rule of extensionality.
\item [$(3)\Rightarrow (1)$] Let $u\in Ax$ and suppose $u=_Xu'$ as well as $x=_Xx'$. Thus $x'+u'=_Xx+u$. By the strong domain axiom and the extensionality rule, we get $1^{-1}((x'+u')-J^A_1(x'+u'))\in A(J^A_1(x'+u'))$. Thus $x'+u'\in\mathrm{dom}(J^A_1)$ and similarly for $x+u$. Now, the rule of extensionality gives $ 1^{-1}((u+x)-x)\in Ax$ and Proposition \ref{pro:partialFundProp}, (1) gives $x=_XJ^A_1(x+u)$. Proposition \ref{pro:partialFundProp}, (4) now gives (as $x+u,x'+u'\in\mathrm{dom}(J^A_1)$) that $x'=_XJ^A_1(x'+u')$. The strong resolvent axiom implies $(x'+u'-x')\in Ax'$ and thus $u'\in Ax'$, again by the rule of extensionality.
\end{enumerate}
Now we show that the strong domain axiom implies the strong extensionality of $J^{\chi_A}$. For that, assume $\gamma> 0$ and that $x\in\mathrm{dom}(J^A_\gamma)$ as well as $\gamma=\gamma'$ and $x=_Xx'$. Then we have $\exists y(\gamma^{-1}(x-y)\in Ay)$. Therefore, we have
\[
x'=_Xx=_Xy+\gamma\gamma^{-1}(x-y)=_Xy+\gamma'\gamma^{-1}(x-y).
\]
By the strong domain axiom, we get
\[
\gamma'^{-1}(x'-J^A_{\gamma'}x')\in A(J^A_{\gamma'}x').
\]
From (II$'$), we get $\gamma^{-1}(x-J^A_\gamma x)\in A(J^A_\gamma x)$. We get by accretivity that
\[
0=\norm{(J^A_\gamma x-J^A_{\gamma'}x')+\gamma(\gamma^{-1}(x-J^A_\gamma x)-\gamma'^{-1}(x'-J^A_{\gamma'}x'))}\geq\norm{J^A_\gamma x-J^A_{\gamma'}x'}.
\]
The proofs for the claims for $\mathcal{U}^\omega_p$ are easy modifications of the above.
\end{proof}
The previous Propositions \ref{pro:resolventprop} and \ref{pro:resolventpropU} can also be modified for these partial resolvents.
\begin{proposition}\label{pro:fundrespropPartial}
$\mathcal{V}^\omega_p$ proves:
\begin{enumerate}
\item[(1)] $\begin{cases}
\forall \gamma^1,\lambda^1,x^X\big(\gamma>_\mathbb{R}0\land\lambda>_\mathbb{R}0\land x\in\mathrm{dom}(J^A_\lambda)\\
\qquad\qquad\qquad\rightarrow J^A_\lambda x=_X J^A_{\gamma}\left(\frac{\gamma}{\lambda} x+_X\left(1-\frac{\gamma}{\lambda}\right)J^A_\gamma x\right)\big).
\end{cases}$
\item[(2)] $\begin{cases}
\forall \gamma^1,\lambda^1,x^X\bigg(\gamma>_\mathbb{R}0\land\lambda>_\mathbb{R}0\land x\in\mathrm{dom}(J^A_\gamma)\land x\in\mathrm{dom}(J^A_\lambda)\\
\qquad\qquad\qquad\qquad\qquad\rightarrow\norm{x-_XJ^A_\gamma x}_X\leq_\mathbb{R}\left(2+\frac{\gamma}{\lambda}\right)\norm{x-_XJ^A_\lambda x}_X\bigg).
\end{cases}$
\end{enumerate}
A fortiori, the same claim holds for $\mathcal{T}^\omega_p$. Similarly, $\mathcal{U}^\omega_p$ proves:
\begin{enumerate}
\item[(3)] $\begin{cases}
\forall \gamma^1,\lambda^1,x^X\big(\gamma>_\mathbb{R}0\land\lambda>_\mathbb{R}0\land\tilde{\rho}>_\mathbb{R}-\gamma\land x\in\mathrm{dom}(J^A_\lambda)\\
\qquad\qquad\qquad\qquad\rightarrow J^A_\lambda x=_X J^A_{\gamma}\left(\frac{\gamma}{\lambda} x+_X\left(1-\frac{\gamma}{\lambda}\right)J^A_\gamma x\right)\big).\end{cases}$
\item[(4)] $\begin{cases}
\forall \gamma^1,\lambda^1,x^X\bigg(\gamma>_\mathbb{R}0\land\lambda>_\mathbb{R}0\land\tilde{\rho}\geq_\mathbb{R}-\frac{\gamma}{2}\land x\in\mathrm{dom}(J^A_\gamma)\\
\qquad\qquad\land x\in\mathrm{dom}(J^A_\lambda)\rightarrow\norm{x-_XJ^A_\gamma x}_X\leq_\mathbb{R}\left(2+\frac{\gamma}{\lambda}\right)\norm{x-_XJ^A_\lambda x}_X\bigg).
\end{cases}$
\end{enumerate}
As before, items (3), (4) also hold for $\mathcal{U}^{*\omega}_p$ under the appropriate modifications: (3) holds with $\tilde{\rho}>_\mathbb{R}-\gamma/2\land\tilde{\rho}>_\mathbb{R}-\lambda/2$ instead of $\tilde{\rho}>_\mathbb{R}-\gamma$ and (4) holds with $\tilde{\rho}>_\mathbb{R}-\gamma/2\land\tilde{\rho}>_\mathbb{R}-\lambda/2$ instead of $\tilde{\rho}\geq_\mathbb{R}-\gamma/2$.
\end{proposition}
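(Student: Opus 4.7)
The plan is to adapt the proofs of Proposition \ref{pro:resolventprop} and Proposition \ref{pro:resolventpropU} essentially verbatim, with careful tracking of the intensional domain hypotheses. The crucial structural observation is that every application of axiom (II$'$) (or (II$'_1$) in the $\mathcal{U}^{*\omega}_p$ case) not only produces a characteristic inclusion of the form $\gamma^{-1}(x -_X J^A_\gamma x) \in A(J^A_\gamma x)$ but, read in the other direction, simultaneously serves as the existential witness that any element $y$ with $\gamma^{-1}(z -_X y) \in Ay$ lies in $\mathrm{dom}(J^A_\gamma)$ in the sense made formal in these systems. This dual role is what allows the proofs to pass through the partial versions.

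For item (1) in $\mathcal{V}^\omega_p$, the first step is to invoke (II$'$) on $J^A_\lambda$ to get $\lambda^{-1}(x -_X J^A_\lambda x) \in A(J^A_\lambda x)$, using the hypothesis $x \in \mathrm{dom}(J^A_\lambda)$ together with $\lambda >_\mathbb{R} 0$. The elementary identity
\[
\lambda^{-1}(x -_X J^A_\lambda x) =_X \gamma^{-1}\bigl(\tfrac{\gamma}{\lambda} x +_X (1-\tfrac{\gamma}{\lambda}) J^A_\lambda x -_X J^A_\lambda x\bigr)
\]
is derivable in $\mathcal{V}^\omega_p$ under $\gamma,\lambda>_\mathbb{R}0$, and one appeals to $\Sigma_1\text{-}\mathrm{ER}$ (Remark \ref{rem:strongExtRule}) to transport the $A$-inclusion along this equality. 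The resulting inclusion is itself the existential witness that $\tfrac{\gamma}{\lambda}x +_X (1-\tfrac{\gamma}{\lambda}) J^A_\lambda x \in \mathrm{dom}(J^A_\gamma)$, so Proposition \ref{pro:partialFundProp}, (1) (uniqueness of the resolvent) delivers the claimed identity.

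For item (2), apply the triangle inequality and rewrite one term via item (1) to obtain
\[
\norm{x -_X J^A_\gamma x}_X \leq_\mathbb{R} \norm{J^A_\gamma x -_X J^A_\gamma\bigl(\tfrac{\gamma}{\lambda}x +_X (1-\tfrac{\gamma}{\lambda})J^A_\lambda x\bigr)}_X + \norm{x -_X J^A_\lambda x}_X,
\]
then estimate the first summand by nonexpansivity of $J^A_\gamma$ (Proposition \ref{pro:partialFundProp}, (3)). The invocation of nonexpansivity requires both arguments to lie in $\mathrm{dom}(J^A_\gamma)$: $x$ does so by hypothesis, and the convex combination does so by the intermediate inclusion constructed during the proof of (1). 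An easy estimate $|1 - \tfrac{\gamma}{\lambda}| \leq 1 + \tfrac{\gamma}{\lambda}$ closes the argument. The statements for $\mathcal{T}^\omega_p$ follow immediately as it is an extension of $\mathcal{V}^\omega_p$. For items (3) and (4) in $\mathcal{U}^\omega_p$ one simply substitutes the $\rho$-comonotone analogues: Proposition \ref{pro:partialFundProp}, (7) for uniqueness (active under $\tilde{\rho} >_\mathbb{R} -\gamma$) and Proposition \ref{pro:partialFundProp}, (11) for nonexpansivity (active under $\tilde{\rho} \geq_\mathbb{R} -\gamma/2$); these are precisely the constraints appearing in the hypotheses. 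The $\mathcal{U}^{*\omega}_p$ variants follow by the systematic replacement of $\tilde{\rho}$-conditions from Remark \ref{rem:UstarProp}.

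The only real obstacle is bookkeeping, namely verifying at every application of a `partial' result from Proposition \ref{pro:partialFundProp} that the relevant domain hypothesis has already been secured. Because the partial axioms cannot be invoked blindly, one must exhibit, for each intermediate point like $\tfrac{\gamma}{\lambda}x +_X (1-\tfrac{\gamma}{\lambda})J^A_\lambda x$, an explicit $A$-membership witnessing its domain inclusion. As shown in the sketch above, every such side obligation is discharged by an inclusion already produced earlier in the proof, so no strengthening of the stated hypotheses is required.
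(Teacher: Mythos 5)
Your proof is correct and matches the paper's intent exactly: the paper omits the argument with the remark that it is ``essentially the same as before,'' i.e.\ one reruns the proofs of Propositions \ref{pro:resolventprop} and \ref{pro:resolventpropU}, checking that each application of the partial axiom (II$'$)/(II$'_1$) and of Proposition \ref{pro:partialFundProp} has its domain hypothesis discharged by an inclusion already in hand. Your key observation --- that the inclusion obtained via $\Sigma_1$-$\mathrm{ER}$ is itself the existential witness placing $\frac{\gamma}{\lambda}x+_X(1-\frac{\gamma}{\lambda})J^A_\lambda x$ in $\mathrm{dom}(J^A_\gamma)$ --- is precisely the bookkeeping the paper leaves implicit (and you correctly read the displayed identity with $J^A_\lambda x$ in the inner argument, as in Proposition \ref{pro:resolventprop}).
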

The proofs are essentially the same as before and we thus omit it.

Similar remarks as before (see Remarks \ref{rem:resolventprop} and \ref{rem:resolventpropU}) regarding alternative axiomatizations of the partial systems also apply here but we omit the details.
\begin{remark}
Similar comments as made in Remarks \ref{rem:yosidaremark} and \ref{rem:yosidaremarkcomon} also apply here regarding the range of the systems and, in particular, all the previously mentioned results for the Yosida approximate extend to this partial setting in the appropriate ways.
\end{remark}
\section{Extensions motivated by mathematical practice}
As mentioned in the introduction, many concrete applications require extensions of the previously introduced systems to deal with certain conditions or moduli required in that particular situation. We discuss some possible extensions in that vein and give pointers to the case studies thus recognizable as applications of the upcoming bound extraction theorems. As mentioned before, further discussions regarding the following, and other, extensions and quantitative notions, both from the perspective of the upcoming metatheorems and their role in previous case studies involving set-valued operators, will be given in \cite{KP2022}.
\subsection{Range conditions}\label{sec:rangecond}
In various algorithmic approaches to problems involving accretive or (generalized) monotone operators, instead of requiring that an operator is maximal (which in many cases is only assumed to ensure that the resolvents are total), one often just requires that the operator fulfills a certain \emph{range condition}, i.e. a condition ensuring that the domains of the resolvents are large enough, such that some particular iteration scheme is well-defined. In that way, a range condition is really a minimal setup for many algorithmic considerations in that context.

We consider these range conditions, and the details surrounding their formalization, in the context of the well-known proximal point algorithm \cite{Mar1970,Roc1976} which is a particularly simple but instructive setup for studying them. This algorithm takes the form of the iteration
\[
x_0\in\mathrm{dom} A,\; x_{n+1}=J^A_{\gamma_n}x_n\tag{$\dagger$}
\]
for some sequence $(\gamma_n)$ with $\gamma_n>0$ for all $n$. A range condition for the operator $A$ which suffices to sustain this iteration is then, e.g., given by stipulating
\[
\mathrm{dom}A\subseteq \bigcap_{n\in\mathbb{N}}(Id+\gamma_n A)(\mathrm{dom A}).
\]
It is easy to see that by adding this assumption, the iteration given by $(\dagger)$ is well-defined.

When we naively formalize the above range condition (assuming for simplicity that the sequence $\gamma_n$ is given by a closed term $\gamma_{\cdot}$ of type $1(0)$), we end up with the sentence
\[
\forall n^0,x^X,y^X\exists z^X,w^X\left(y\in Ax\rightarrow \left(w\in Az\land x=_Xz+_X\gamma_n w\right)\right).
\]
Note that the inner matrix $y\in Ax\rightarrow \left(w\in Az\land x=_Xz+_X\gamma_n w\right)$ is a universal formula and therefore, it may be enticing to think about ways to bound the quantifiers $\exists z^X,w^X$ to turn this into an axiom of type $\Delta$ as defined in \cite{Koh2008} (see also Section \ref{sec:boundExtractionThms} for a precise definition) as these have a simple monotone functional interpretation and thus can be added to the previous systems while still allowing for bound extraction results.

Bounding $z$ leads us to consider what we shall call \emph{bounded range conditions} by which we shall mean range conditions which only require the above statement to be fulfilled on a certain closed ball in the sense of 
\[
\mathrm{dom}A\cap \overline{B}_L(a)\subseteq \bigcap_{n\in\mathbb{N}}(Id+\gamma_n A)(\mathrm{dom A}\cap \overline{B}_L(a)).
\]
In a way, this is how the range conditions are used in the context of the proximal point algorithm (and other iteration schemes) as they are only ever needed for the elements $x_n$ from the above iteration ($\dagger$) which, under the assumption of being bounded, all lie in some ball.

This kind of bounded range condition can be brought into the required form. First note that we can naively express the above range condition by restricting the quantifiers over $z$ and $x$ to be bounded by some constant $L$ via
\[
\forall n^0,x^X\preceq_X L1_X,y^X\exists z^X\preceq_X L1_X\exists w^X\left(y\in Ax\rightarrow (w\in Az\land x=_Xz+_X\gamma_n w)\right).
\]
with $\preceq_X$ defined by $x\preceq_X y:=\norm{x}_X\leq_\mathbb{R}\norm{y}_X$.

Now, we can even provide a bound for $w$ in terms of $L$: As $x=z+\gamma_n w$, we have
\[
\norm{w}=\frac{\norm{x-z}}{\vert\gamma_n\vert}\leq\frac{\norm{x}+\norm{z}}{\vert\gamma_n\vert}\leq\frac{2L}{\vert\gamma_n\vert}.
\]
As we assume $\gamma_n> 0$, the $\gamma_n$ come equipped with a modulus $\alpha_n\in\mathbb{N}$ such that $\gamma_n>2^{-\alpha_n}$ (see in particular the discussion from Section \ref{sec:sysoffinittypeintro} on how reciprocals are formally treated in $\mathrm{WE}$-$\mathrm{PA}^\omega$). Thus, using these moduli (represented by a function $\alpha$ of type $1$), we obtain
\[
\norm{w}\leq\frac{2L}{2^{-\alpha_n}}=L\cdot 2^{\alpha_n+1}.
\]
Thus, we can formulate the above range condition by bounding $w$ via
\begin{align*}
&\forall n^0,x^X\preceq_X L1_X,y^X\exists z^X\preceq_X L1_X\\
&\qquad\qquad\qquad\exists w^X\preceq_XL2^{\alpha_n+1}1_X\left(y\in Ax\rightarrow (w\in Az\land x=_Xz+_X\gamma_n w)\right).
\end{align*}
This is not yet of the form $\Delta$ as we have an additional premise in the form of $x\preceq_X L1_X$ which adds another, in particular non-bounded, existential quantifier to a prenexiation of the formula. This can be avoided by moving to an intensional version of $x\preceq_X L1_X$ via the following term construction (as used in \cite{GuK2016,KL2012} for the case of $L=1$): for $x^X$, we write 
\[
\tilde{x}^L:=\frac{Lx}{\max_\mathbb{R}\{\norm{x}_X,L\}}.
\]
Then we can consider 
\begin{align*}
&\forall n^0,x^X,y^X\exists z^X\preceq_X L1_X\exists w^X\preceq_XL2^{\alpha_n+1}1_X\\
&\qquad\qquad\qquad\left(y\in A(\tilde{x}^L)\rightarrow (w\in Az\land \tilde{x}^L=_Xz+_X\gamma_n w)\right)
\end{align*}
where we have internalized the bound on $x$ via $\tilde{x}^L$. This is now of the form $\Delta$ and can be allowed as an assumption in the upcoming bound extraction theorems. 

It is also worth noting that in the context of the $\Delta$ axioms, we of course can be much more general regarding the bounds on $z$ which can even be of the form $z\preceq_X tnxy$ for a closed term $t$ (which may be used to formulate many other types of domains besides closed balls).\\

Using this formal approach to range conditions in the context of our systems, we can recognize the previously mentioned case studies \cite{Koh2020,Koh2021b,Koh2021} on the proximal point algorithm (in the context of various types of operators $A$) as particular applications of the upcoming metatheorems. While the former just studies $\rho$-comonotone operators in Hilbert spaces, the latter two consider accretive operators in certain Banach spaces, in particular uniformly convex in the former and uniformly convex and uniformly smooth Banach spaces in the latter case. These therefore need, besides the previously mentioned formalized range conditions, additional moduli of uniform convexity and of uniform smoothness which can however also be treated in the context of bound extraction theorems as discussed already in, e.g., \cite{Koh2008}.
\subsection{Majorizable operators}\label{sec:majop}
Proofs which make essential use of representatives $y\in Ax$ for $x\in\mathrm{dom}A$ (i.e. $Ax\neq\emptyset$) can be treated by providing a suitable witnessing (Skolem) functional for the statement 
\[
\forall x^X\exists y^X(x\in\mathrm{dom}A\rightarrow y\in Ax)
\]
which can immediately be treated by adding a further constant $a$ of type $X(X)$ together with a defining axiom like
\[
\forall x^X(x\in\mathrm{dom}A\rightarrow ax\in Ax)\tag{$\mathrm{NE}$}
\]
where we write $x\in\mathrm{dom}A:=\exists y^X\left(y\in Ax\right)$. Such a witnessing functional $a$ can take many forms depending on the particular application scenario (which might require additional axioms).

In any way however, such a functional then of course requires majorizing data if used in the bound extraction theorems. Precise definitions for (strong) majorizability (where we omit the prefix `strong' in the following) and related notions will follow in the next section but we want to discuss the notion of a majorant for the functional $a$ here already: a function $f$ of type $1$ is called a \emph{majorant} for $a$ if it is non-decreasing, i.e. $n\geq m$ implies $fn\geq fm$, and it satisfies
\[
n\geq\norm{x}\rightarrow fn\geq\norm{ax}.
\]

Thus, any witnessing functional $a$ for an operator $A$ can only be treated in the context of the bound extraction theorems if there is \emph{at least one} choice which is majorizable. We thus propose the following (in the above sense minimal) definition:
\begin{definition}
An operator $A$ is called \emph{majorizable} if there exists a choice for $a$ satisfying $(\mathrm{NE})$ which is majorizable.
\end{definition}

A first thing to note is that there are non-majorizable operators $A$ and we want to give a quick example here: Consider the partial convex function $\varphi:(0,\pi/2)\to\mathbb{R}$, $x\mapsto\tan(x)$. As $\varphi$ is differentiable, we get that the subgradient of $\varphi$ is given by
\[
x\in (0,\pi/2)\mapsto\partial\varphi(x):=\left\{\frac{1}{\cos^2 x}\right\}.
\]
It is well known that $\partial\varphi(x)$ is a monotone operator and in particular can be extended to the whole domain by setting
\[
A:x\mapsto\begin{cases}\partial\varphi(x)&\text{if }x\in (0,\pi/2),\\\emptyset&\text{otherwise},\end{cases}
\]
which preserves monotonicity. It can be easily seen that $A$ is not majorizable.

The main exploited feature of the $\varphi$ defined above is that it is unbounded on a bounded subset and thus in particular not majorizable itself (if identified with some continuation from $(0,\pi/2)$ to $\mathbb{R}$). Interestingly, this majorizability of $\varphi$ is in some sense all that is needed to even guarantee a strong form of majorizability for the particular case of monotone operators of the form $\partial\varphi$. 

For a given function $\varphi:X\to\mathbb{R}$, we say that $\varphi$ is \emph{bounded on bounded sets} if $\varphi(\overline{B}_n(0))$ is bounded for any $n$. The following is then immediate:
\begin{proposition}
For a given $\varphi:X\to\mathbb{R}$, $(\vert \varphi\vert)_\circ$ is majorizable as a type $1(X)$ functional if, and only if $\varphi$ is bounded on bounded sets where $(\vert \varphi\vert)_\circ$ is defined by $(\vert \varphi\vert)_\circ(x):=(\vert \varphi(x)\vert)_\circ$ with $(\cdot)_\circ$ defined as in Section \ref{sec:sysoffinittypeintro}
\end{proposition}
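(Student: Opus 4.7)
The plan is to prove each direction by direct translation between the analytic notion of boundedness on bounded sets and the type-theoretic notion of (strong) majorizability, using the properties of the canonical representation $(\cdot)_\circ$ cited just above the statement.

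First I would dispatch the easy direction: suppose $(\vert\varphi\vert)_\circ$ has a majorant $f^*$ of type $1(0)$. Given any $n \in \mathbb{N}$ and any $x \in X$ with $\|x\| \leq n$, the natural number $n$ majorizes $x$ at type $X$ in the Gerhardy--Kohlenbach sense, so by definition of a majorant $f^*(n) \geq_1 (\vert\varphi(x)\vert)_\circ$. By the fact that $(\cdot)_\circ$ is a representation of the underlying real (item (1) of the cited lemma), passing from type-$1$ majorization to the represented real yields $\vert\varphi(x)\vert \leq r_{f^*(n)}$. Thus the constant $r_{f^*(n)}$ is an upper bound for $\vert\varphi\vert$ on $\overline{B}_n(0)$, so $\varphi(\overline{B}_n(0))$ is bounded for every $n$.

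For the converse I would construct a majorant explicitly. Assuming $\varphi$ is bounded on bounded sets, define
\[
g(n) := \sup\{\vert\varphi(x)\vert \mid x \in X,\, \|x\| \leq n\} \in [0,\infty),
\]
which is well-defined and real-valued by the boundedness assumption. The function $g : \mathbb{N} \to [0,\infty)$ is non-decreasing since the sets $\overline{B}_n(0)$ grow with $n$. Now set $f^*(n) := (g(n))_\circ$ as an object of type $1(0)$. Using item (2) of the cited lemma, $m \leq n$ implies $(g(m))_\circ \leq_1 (g(n))_\circ$, i.e.\ $f^*$ is non-decreasing in its first argument; item (3) then supplies that each $f^*(n)$ is non-decreasing as a type-$1$ function in its second argument. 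Finally, for any $x$ and any $n \geq \|x\|$, one has $\vert\varphi(x)\vert \leq g(n)$ by definition of $g$, hence by item (2) of the lemma $(\vert\varphi(x)\vert)_\circ \leq_1 (g(n))_\circ = f^*(n)$. This verifies all the clauses needed for $f^*$ to be a (strong) majorant of $(\vert\varphi\vert)_\circ$ at type $1(X)$.

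There is no serious obstacle here; the proof is essentially a translation exercise, and the only point that requires mild care is ensuring that the constructed $g$ is not only pointwise finite but also monotone, so that the cited preservation properties of $(\cdot)_\circ$ directly yield both clauses (monotonicity and the majorization estimate) needed for strong majorizability. Thus it will suffice to invoke the lemma on $(\cdot)_\circ$ twice, once in each direction of the equivalence.
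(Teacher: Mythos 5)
Your proof is correct and is exactly the argument the paper has in mind (the paper labels the proposition ``immediate'' and omits the proof, so there is no alternative route to compare against): the reverse direction via the monotone envelope $g(n):=\sup\{\vert\varphi(x)\vert\mid\norm{x}\leq n\}$ together with items (2) and (3) of the lemma on $(\cdot)_\circ$ is precisely what is needed, since those two items supply both the domination clause and the hereditary monotonicity clause of $\gtrsim_{1(X)}$. One small imprecision in your easy direction: from $f^*(n)\gtrsim_1(\vert\varphi(x)\vert)_\circ$ you cannot literally write $\vert\varphi(x)\vert\leq r_{f^*(n)}$, because a type-$1$ majorant need not itself be a fast-converging Cauchy sequence and so need not represent a real; instead one reads off a numerical bound on $\vert\varphi(x)\vert$ directly from $f^*(n)(0)\geq(\vert\varphi(x)\vert)_\circ(0)=j(2\lfloor 2\vert\varphi(x)\vert\rfloor,1)$ and the monotonicity of the pairing function $j$, which gives the same conclusion.
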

This notion of $\varphi$ being bounded on bounded sets extends from $\varphi$ to $\partial\varphi$ in many situations as the following result shows. For this, we similarly introduce a notion of boundedness for set-valued operators $A$: we say that $A$ is \emph{bounded on bounded sets} if $A(\overline{B}_n(0))=\bigcup_{x\in\overline{B}_n(0)}Ax$ is bounded for any $n$.
\begin{proposition}[\cite{BC2017}, Proposition 16.20]
Let $\varphi:X\to\mathbb{R}$ be continuous and convex on a Hilbert space $X$. Then, the following are equivalent:
\begin{enumerate}
\item[(1)] $\varphi$ is bounded on bounded sets.
\item[(2)] $\varphi$ is Lipschitz continuous on every bounded set.
\item[(3)] $\mathrm{dom}\,\partial \varphi=X$ and $\partial \varphi$ is bounded on bounded sets.
\end{enumerate}
\end{proposition}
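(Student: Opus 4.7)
The plan is to establish the cycle $(2)\Rightarrow(1)\Rightarrow(2)\Rightarrow(3)\Rightarrow(1)$, with $(2)\Rightarrow(1)$ being immediate since any Lipschitz image of a bounded set is bounded. The substantive content is the classical equivalence $(1)\Leftrightarrow(2)$: on a topological vector space, a convex function that is locally bounded above is automatically locally Lipschitz. Since continuity and convexity on all of $X$ are already assumed, the geometry does all the work.

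For $(1)\Rightarrow(2)$, I would use the standard convexity trick. Fix $n\in\mathbb{N}$ and let $M$ be an upper bound for $|\varphi|$ on $\overline{B}_{n+1}(0)$. Given distinct $x,y\in\overline{B}_n(0)$, set $z:=y+(y-x)/\norm{y-x}\in\overline{B}_{n+1}(0)$, so that $y$ lies on the segment $[x,z]$ with coefficient $1-\lambda=\norm{y-x}/(1+\norm{y-x})\leq\norm{y-x}$. Convexity yields
\[
\varphi(y)-\varphi(x)\leq (1-\lambda)(\varphi(z)-\varphi(x))\leq 2M\norm{y-x},
\]
and exchanging the roles of $x$ and $y$ gives $|\varphi(x)-\varphi(y)|\leq 2M\norm{x-y}$. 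Thus $\varphi$ is $2M$-Lipschitz on $\overline{B}_n(0)$.

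For $(2)\Rightarrow(3)$, I would first invoke the standard Hahn--Banach argument: a continuous convex function on all of $X$ admits a continuous affine minorant agreeing with it at any prescribed point, so $\partial\varphi(x)\neq\emptyset$ for every $x\in X$, i.e.\ $\mathrm{dom}\,\partial\varphi=X$. Then, if $\varphi$ is $K$-Lipschitz on a neighborhood of $\overline{B}_n(0)$ and $u\in\partial\varphi(x)$ with $x\in\overline{B}_n(0)$, the subgradient inequality applied to $y=x+tu/\norm{u}$ for small $t>0$ yields $t\norm{u}=\langle y-x,u\rangle\leq\varphi(y)-\varphi(x)\leq Kt$, hence $\norm{u}\leq K$, giving the required boundedness of $\partial\varphi$ on $\overline{B}_n(0)$. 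Finally, for $(3)\Rightarrow(1)$, pick any $u_0\in\partial\varphi(0)$ (available by $\mathrm{dom}\,\partial\varphi=X$); the subgradient inequality at $0$ gives $\varphi(x)\geq\varphi(0)-\norm{u_0}\cdot n$ for all $x\in\overline{B}_n(0)$, while for the matching upper bound one picks $u_x\in\partial\varphi(x)$ and rearranges $\varphi(0)\geq\varphi(x)-\langle u_x,x\rangle$ to get $\varphi(x)\leq\varphi(0)+n\sup_{y\in\overline{B}_n(0)}\sup_{v\in\partial\varphi(y)}\norm{v}$, which is finite by assumption.

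I do not expect any of these steps to be a serious obstacle; the only subtlety is that $(1)\Rightarrow(2)$ relies crucially on $\varphi$ being defined and finite on a slightly larger ball than the one on which we want the Lipschitz estimate, which is where the global hypothesis $\varphi:X\to\mathbb{R}$ (rather than a proper convex function with restricted effective domain) is used. In particular, no Hilbert-space structure is needed for $(1)\Leftrightarrow(2)$; that hypothesis enters only mildly through the use of the Riesz representation implicit in identifying $\partial\varphi(x)\subseteq X$ with a subset of $X$ itself in the formulation of $(3)$.
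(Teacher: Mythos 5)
Your proof is correct. Note, however, that the paper does not prove this proposition at all: it is imported verbatim as Proposition 16.20 of \cite{BC2017}, so there is no in-paper argument to compare against. Your chain $(2)\Rightarrow(1)\Rightarrow(2)\Rightarrow(3)\Rightarrow(1)$ is the standard one (and essentially the one in the cited reference): the local-boundedness-implies-local-Lipschitz convexity trick, nonemptiness of $\partial\varphi$ via a supporting hyperplane for the epigraph, the subgradient-norm bound from the Lipschitz constant, and the two-sided bound on $\varphi$ from subgradient inequalities. Your closing remarks are also accurate: $(1)\Leftrightarrow(2)$ needs no Hilbert structure, and the only role of completeness/inner product here is the identification $\partial\varphi(x)\subseteq X$ in item (3).
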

Now, an operator being bounded on bounded sets can be recognized as a majorizability assumption on $A$ in disguise.
\begin{proposition}\label{pro:bobsMajorizable}
$A$ is bounded on bounded sets if, and only if
\begin{align*}
&\exists {a^*}^{0(0)}\forall a^{X(X)}\Big(\forall x^X(x\in\mathrm{dom}A\rightarrow ax\in Ax)\\
&\qquad\qquad\qquad\land\forall x^X(x\not\in\mathrm{dom}A\rightarrow\norm{ax}=0)\rightarrow a^*\gtrsim a\Big).
\end{align*}
\end{proposition}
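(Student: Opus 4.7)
The plan is to prove the two implications separately by straightforward constructions, treating the existential statement on the right as a uniform reformulation of the boundedness condition on the left, with the additional twist that the `\emph{for all $a$}' quantifier is handled via a choice argument.

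For the forward direction ($\Rightarrow$), assume $A(\overline{B}_n(0))$ is bounded for each $n\in\mathbb{N}$, so that $M_n:=\sup\{\norm{y}_X\mid y\in A(\overline{B}_n(0))\}<\infty$ is well-defined. Define
\[
a^*(n):=\ceil*{\max_{k\leq n}M_k}
\]
which by construction is a nondecreasing function of type $0(0)$. To verify $a^*\gtrsim a$ for any $a$ satisfying the two bracketed conditions, let $n\geq\norm{x}_X$. If $x\in\mathrm{dom}A$, the first hypothesis gives $ax\in Ax\subseteq A(\overline{B}_n(0))$, so $\norm{ax}_X\leq M_n\leq a^*(n)$. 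If $x\not\in\mathrm{dom}A$, the second hypothesis gives $\norm{ax}_X=0\leq a^*(n)$. Together with monotonicity, this is exactly strong majorization in the sense applicable to type $X(X)$ functionals.

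For the backward direction ($\Leftarrow$), fix such an $a^*$. To show $A(\overline{B}_n(0))$ is bounded (by $a^*(n)$), pick an arbitrary $x\in\overline{B}_n(0)$ and $y\in Ax$. Using the axiom of choice in the meta-theory, construct a selection $a:X\to X$ by setting $a(x):=y$, choosing $a(x')\in Ax'$ for the remaining $x'\in\mathrm{dom}A$, and setting $a(x'):=0_X$ for $x'\not\in\mathrm{dom}A$. Then $a$ satisfies the two premises (note $x\in\mathrm{dom}A$ since $y\in Ax$), so by assumption $a^*\gtrsim a$; applying this to the point $x$ and the bound $n\geq\norm{x}_X$ yields $\norm{y}_X=\norm{ax}_X\leq a^*(n)$. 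Since $x$ and $y$ were arbitrary, $A(\overline{B}_n(0))$ is contained in the ball of radius $a^*(n)$.

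There is no genuine obstacle here: the content of the proposition is really the observation that uniform majorizability of the \emph{class} of all selection functionals is logically equivalent to the uniform norm bound on the union $A(\overline{B}_n(0))$, and both directions are essentially bookkeeping once one observes that a single-point modification of a selection suffices in the reverse direction. The only point worth flagging is that the backward direction is an external (meta-level) argument using choice to realise a selection with a prescribed value at $x$; this is unproblematic since the proposition is a statement about operators, not a formal derivation inside one of the systems $\mathcal{V}^\omega$, $\mathcal{T}^\omega$, or $\mathcal{U}^\omega$.
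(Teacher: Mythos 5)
Your argument is correct and is precisely the ``rather immediate'' proof the paper omits: the forward direction packages the bounds on $A(\overline{B}_n(0))$ into a nondecreasing uniform majorant, and the backward direction recovers the bound on an arbitrary $y\in Ax$ by a single-point choice of selection functional, which is the intended reading of the uniform quantifier over $a$. The only cosmetic point is that $M_n$ should be set to $0$ when $A(\overline{B}_n(0))=\emptyset$ so the supremum is well-defined; otherwise nothing is missing.
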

The proof is rather immediate and we thus omit it.

\smallskip

Therefore $A$ is bounded on bounded sets if, and only if there is a \emph{uniform} majorant $a^*$ for any $a$ satisfying $(\mathrm{NE})$ for $A$ and which is sufficiently well behaved on $(\mathrm{dom}A)^c$ (where the behavior is anyhow not important). In that case, we call $A$ \emph{uniformly majorizable}. Combined, we then in particular have for a convex and continuous $\varphi$ that majorizability of $\vert\varphi\vert$ implies uniform majorizability of $\partial\varphi$.

\smallskip

In particular, in the light of the ubiquity of the notion of `bounded on bounded sets' in the literature on monotone operators, the above proposition provides a valuable insight on the proof theoretic nature of that assumption. Further, an analysis of proofs involving such selection functionals actually in many situations only needs a majorizable and not uniformly majorizable operator and it can thus be expected that this assumption of $A$ being bounded on bounded sets can often be weakened in a proof theoretic analysis to that of simple majorizability. A concrete example for this is given in \cite{Pis2022b} in the context of an analysis for an algorithm of Moudafi \cite{Mou2015}. More connections between (uniform) majorizability and other quantitative notions for set-valued operators (especially in the context of the Br\'ezis-Haraux theorem \cite{BH1976} and its use in the analysis provided by Kohlenbach \cite{Koh2019b} for Bauschke's proof of the zero displacement conjecture \cite{Bau2003}) will be given in \cite{KP2022}.
\subsection{The minimal norm selection functional}
We want to discuss one particular selection functional which occurs often in the literature and which can actually be treated already whenever one has a majorizable operator. This operator is $A^\circ x$ which returns the element of minimal norm in $Ax$ (as long as $Ax\neq\emptyset$). This operator is particularly well-known in the context of nonsmooth analysis, e.g. being central to the dual formulation of the well-known bundle method (see, e.g., \cite{SZ1992}), as subgradients of minimal norm are used to select descent directions in nonsmooth contexts.

$A^\circ x$ is usually defined (see, e.g., \cite{BC2017}) as the projection $P_{Ax}0$ (which always exists in the context of Hilbert spaces as, e.g., follows from Proposition 20.36 and Theorem 3.16 in \cite{BC2017}). For that reason, we focus here on the special case where our space is a Hilbert space and $A$ is maximally monotone as this implies a nice characterization of the projection in terms of the inner product:
\begin{lemma}[\cite{BC2017}, Theorem 3.16]
If $X$ is a Hilbert space and $C\subseteq X$ is nonempty, closed and convex, then for any $x,p\in X$:
\[
p=P_Cx\text{ iff }\left(p\in C\text{ and }\forall q\in C\left(\langle q-p,x-p\rangle\leq 0\right)\right).
\]
\end{lemma}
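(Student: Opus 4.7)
The plan is to establish the two directions of the biconditional separately, using only convexity of $C$ together with the defining minimization property of the projection.

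For the forward direction, assume $p = P_C x$. The containment $p \in C$ is immediate from the definition. To obtain the variational inequality, I fix an arbitrary $q \in C$ and consider the convex combination $p_t := (1-t)p + tq = p + t(q-p)$, which lies in $C$ for every $t \in [0,1]$. Expanding the squared norm via the inner product yields
\[
\norm{x - p_t}^2 = \norm{x-p}^2 - 2t\langle x-p, q-p\rangle + t^2 \norm{q-p}^2.
\]
Minimality of $p$ gives $\norm{x - p_t}^2 \geq \norm{x - p}^2$ for every $t \in (0,1]$; dividing by $t$ and rearranging produces $2\langle x-p, q-p\rangle \leq t\norm{q-p}^2$, and letting $t \to 0^+$ yields $\langle q-p, x-p\rangle \leq 0$.

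For the reverse direction, assume $p \in C$ and $\langle q-p, x-p\rangle \leq 0$ for all $q \in C$. I will directly verify that $p$ minimizes $\norm{x - \cdot}$ on $C$. For arbitrary $q \in C$, the same quadratic identity, now at $t = 1$, gives
\[
\norm{x-q}^2 = \norm{x-p}^2 - 2\langle x-p, q-p\rangle + \norm{q-p}^2,
\]
and since $-2\langle x-p, q-p\rangle \geq 0$ by hypothesis and $\norm{q-p}^2 \geq 0$, I conclude $\norm{x-q}^2 \geq \norm{x-p}^2$. Hence $p$ minimizes the distance to $x$ on $C$, and by the standard uniqueness of the projection (a consequence of the parallelogram law in Hilbert space) it follows that $p = P_C x$.

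I expect no real obstacle: both implications reduce to one and the same quadratic expansion of $\norm{x - p - s(q-p)}^2$, applied in one case as $s \to 0^+$ and in the other at $s = 1$. The only mildly delicate point is the limit argument in the forward direction, which is elementary. The entire argument uses only inner product manipulations and convex combinations inside $C$, so it would be straightforwardly formalizable in $\mathcal{A}^\omega[X,\langle\cdot,\cdot\rangle]$ augmented by appropriate constants and axioms encoding the projection $P_C$.
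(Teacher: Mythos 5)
Your proof is correct and is the standard argument (the one given in the cited reference \cite{BC2017}); the paper itself states this lemma without proof, deferring to that reference. Both directions rest on the single quadratic expansion of $\norm{x-p-t(q-p)}^2$ exactly as you use it, and your appeal to uniqueness of the minimizer in the reverse direction is the right way to close the argument.
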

Using this implicit characterization of $A^\circ x$ as $P_{Ax}0$, this operator can be simulated by adding a constant $A^\circ_X$ of type $X(X)$ together with the following axioms:
\begin{enumerate}
\item[$(\mathrm{Y1})$] $\forall x^X(x\in\mathrm{dom}A\rightarrow A^\circ_X x\in Ax)$,
\item[$(\mathrm{Y2})$] $\forall x^X,y^X(y\in Ax\rightarrow \langle y-A^\circ_X x,-A^\circ_X x\rangle\leq 0)$.
\end{enumerate}
We write $(\mathrm{Y})$ as a shorthand for the axioms $(\mathrm{Y1})$ and $(\mathrm{Y2})$ as above.

By $(\mathrm{Y1})$, $A^\circ_X$ satisfies $(\mathrm{NE})$ for $A$ which is special in the way that it is actually the minimal realizer of the respective axiom in the sense of the following proposition:
\begin{proposition}\label{pro:Acircmaj}
$A^\circ_X$ is majorizable iff $A$ is majorizable.
\end{proposition}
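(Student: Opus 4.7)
The plan is to prove the two implications of the biconditional separately. The forward direction is essentially a tautology once axiom $(\mathrm{Y1})$ is noted, while the reverse exploits $(\mathrm{Y2})$ to extract the projection/minimal-norm property and derives the relevant norm bound via Cauchy--Schwarz.

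First I would handle the direction ``$A^\circ_X$ majorizable $\Rightarrow A$ majorizable''. By $(\mathrm{Y1})$, the functional $A^\circ_X$ itself satisfies the selection condition $(\mathrm{NE})$, namely $x \in \mathrm{dom}\, A \to A^\circ_X x \in Ax$. Hence choosing $a := A^\circ_X$ in the definition of $A$ being majorizable produces a witness: $a$ satisfies $(\mathrm{NE})$ by $(\mathrm{Y1})$ and is majorizable by assumption, so $A$ is majorizable.

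For the reverse direction, suppose $a$ is a selection functional satisfying $(\mathrm{NE})$ and admitting a non-decreasing majorant $f$ of type $1$, i.e.\ $n \geq \norm{x}_X \to fn \geq \norm{ax}_X$. The crux is the claim
\[
\forall x,y \left( y \in Ax \to \norm{A^\circ_X x}_X \leq \norm{y}_X \right),
\]
i.e.\ that $A^\circ_X x$ has minimal norm in $Ax$. To prove this I would unfold $(\mathrm{Y2})$, which reads $\norm{A^\circ_X x}_X^2 \leq \langle y, A^\circ_X x\rangle_X$ after expanding the inner product, and then apply Cauchy--Schwarz to obtain $\norm{A^\circ_X x}_X^2 \leq \norm{y}_X\,\norm{A^\circ_X x}_X$, whence the claim follows (the case $\norm{A^\circ_X x}_X = 0$ being immediate). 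Specializing to $y := ax$ for $x \in \mathrm{dom}\, A$, where $(\mathrm{NE})$ guarantees $ax \in Ax$, yields $\norm{A^\circ_X x}_X \leq \norm{ax}_X \leq f(\norm{x}_X)$, and using monotonicity of $f$ we get $n \geq \norm{x}_X \to fn \geq f(\norm{x}_X) \geq \norm{A^\circ_X x}_X$ on the domain.

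The one subtlety, which I expect to be the only nontrivial point, is that axioms $(\mathrm{Y1})$ and $(\mathrm{Y2})$ are vacuously satisfied at points $x \notin \mathrm{dom}\, A$, so $A^\circ_X$ is a priori not intrinsically constrained to be bounded off the domain. Since the proposition is a semantic statement about the existence of a majorant, I would resolve this by observing that one is free to interpret $A^\circ_X x := 0_X$ for $x \notin \mathrm{dom}\, A$ without violating either $(\mathrm{Y1})$ or $(\mathrm{Y2})$; then $\norm{A^\circ_X x}_X = 0 \leq f(\norm{x}_X)$ on the complement of $\mathrm{dom}\, A$ as well. Combining the two cases, $f$ is a majorant for $A^\circ_X$, completing the reverse direction.
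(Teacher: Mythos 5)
Your proof is correct and follows exactly the argument the paper intends (the paper states the proposition without proof, but the preceding remark that $A^\circ_X$ is the \emph{minimal} realizer of $(\mathrm{NE})$, and the model construction in Lemma \ref{lem:majmainresult} which sets $[A^\circ]_{\mathcal{M}}(x):=0$ off $\mathrm{dom}\,A$, show this is the intended route): the forward direction is immediate from $(\mathrm{Y1})$, and the reverse direction rests on deriving $\norm{A^\circ_X x}_X\leq\norm{y}_X$ for $y\in Ax$ from $(\mathrm{Y2})$ via Cauchy--Schwarz and then transferring the majorant of $a$ to $A^\circ_X$, with the zero interpretation off the domain handling the unconstrained points.
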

Thus, proofs using $A^\circ$ can indeed be treated in the context of majorizable operators. Moreover, via the axioms $(\mathrm{Y})$, we can develop a substantial part of the theory of $A^\circ$. For example, one can prove (in weak fragments of $\mathcal{A}^\omega[X,\langle\cdot,\cdot\rangle]$ already), that projections characterized by the inner product condition are unique and that $A^\circ$ is consequently the unique element of minimal norm in the sense that $\mathcal{T}^\omega+(\mathrm{Y})$ proves that
\[
\forall x^X,z^X\left(z\in Ax\land \norm{z}_X=\norm{A^\circ_X x}_X\rightarrow A^\circ_X x=_Xz\right).
\]
Using the upcoming bound extraction theorems, one can even provide suitable quantitative versions of those results (which was crucially used in a recent case study given in \cite{Pis2022b}). Note also that while completeness of the space and maximality of the operator are necessary to guarantee the existence of $A^\circ$, both properties are not provable in the system. If the proof uses these properties in an essential way, besides of ensuring existence of $A^\circ$, one has to add quantitative forms of them to the system. The next subsection provides a discussion for this regarding maximality and for possibilities regarding the completeness of the space, see Remark \ref{rem:extensionsMeta}.

The main example of application for majorizable operators was recently considered in \cite{Pis2022b} for a quantitative treatment of a convergence result due to Moudafi \cite{Mou2015}. Concretely, given two maximally monotone operators $T,S$ on a real Hilbert space $X$, \cite{Mou2015} considers the sequence
\[
x_{n+1}=J^S_{\mu_n}(x_n+\mu_n T_{\lambda_n}x_n)
\]
with initial point $x_0$ and parameters $\mu_n,\lambda_n>0$. Under suitable conditions, this sequence converges to a point $x^*\in X$ with
\[
T(x^*)\cap S(x^*)\neq\emptyset.
\]
As $T^\circ$ is strongly connected with the Yosida approximate, in particular through $T_{\lambda_n}x\to T^\circ x$ for $x\in\mathrm{dom}T$ and $\lambda_n\to 0$ (see \cite{BC2017}, Corollary 23.46), the operator $T^\circ$ features prominently in the proofs given in \cite{Mou2015} and consequently also in the analysis presented in \cite{Pis2022b} which thus requires a treatment of $T^\circ$ in the sense of the above additional constants and axioms. Also, the notion of $A$ being bounded on bounded sets features prominently in \cite{Mou2015}. As already mentioned before, the analysis weakens this assumption to simple majorizability as guided by the metatheorems which provides an example of a result where the quantitative analyses provided by this general logical approach actually yielded a qualitative improvement in the result. For further discussions of this issue, see \cite{Pis2022b}.

\subsection{Moduli of uniform continuity}\label{sec:modunifcont}
To treat problems which use some form of extensionality of $A$ in an essential way, one has to consider a corresponding (uniform) quantitative version of the respective extensionality statement. These usually come in the form of a modulus of uniform continuity (see in particular the discussions in \cite{Koh2008} for various perspectives on this issue). In the case of set-valued operators $A$, one choice might be to consider uniform continuity for $A$ w.r.t. the Hausdorff metric in the sense of \cite{MN2001} where the Hausdorff metric $H$ is defined as
\[
H(P,Q):=\max\left\{\sup_{p\in P}\inf_{q\in Q}\norm{p-q},\sup_{q\in Q}\inf_{p\in P}\norm{p-q}\right\}
\]
for closed non-empty sets $P,Q$. To motivate this (where we follow the discussion presented in \cite{KP2020}) consider the following `version' of extensionality
\[
\forall x,y\in\mathrm{dom} A(x=y\rightarrow H(Ax,Ay)=0).
\]
Guided by the monotone function interpretation, this has a uniform quantitative version asserting the existence of a \emph{modulus of uniform continuity of $A$ w.r.t. the Hausdorff metric}, i.e. the existence of an $\omega:\mathbb{N}\to\mathbb{N}$ such that
\[
\forall k\in\mathbb{N}\forall x,y\in\mathrm{dom} A\left(\norm{x-y}<\frac{1}{\omega(k)+1}\rightarrow H(Ax,Ay)\leq\frac{1}{k+1}\right).
\]
The above definition of the Hausdorff metric requires the use of infima and is therefore not immediately definable in the systems. Further, the restriction to $\mathrm{dom}A$ prohibits the comparison between elements $x$ with $Ax=\emptyset$ and thus the above is not even a real quantitative version of the extensionality statement. However, this restriction is necessary for $H(Ax,Ay)$ to be well-defined. For those reasons, Kohlenbach and Powell propose the following weakening in  \cite{KP2020}: instead of using the metric $H$, one introduces a \emph{Hausdorff-like predicate} $H^*$ defined via
\[
H^*[P,Q,\varepsilon]:=\forall p\in P\exists q\in Q\left(\norm{p-q}\leq\varepsilon\right)
\]
and stipulates uniform continuity for $A$ w.r.t. that predicate via the existence of a \emph{modulus} $\varpi:\mathbb{N}\to\mathbb{N}$ such that
\[
\forall k\in\mathbb{N}\forall x,y\in X\left(\norm{x-y}<\frac{1}{\varpi(k)+1}\rightarrow H^*\left[Ax,Ay,\frac{1}{k+1}\right]\right)\tag{$\dagger$}
\]
We can actually recognize this as the uniform quantitative version (guided by the monotone functional interpretation) of the following approximate weakening of the extensionality principle
\[
\forall x^X,y^X\left(x=_Xy\rightarrow\forall k^0 H^*\left[Ax,Ay,\frac{1}{k+1}\right] \right)
\]
which can be used in place of the usual extensionality principle in some situations, e.g., whenever the rest of the proof after the application is extensional in the variables. Moreover, $(\dagger)$ can be added to the previous systems by an axiom of the form $\Delta$. More precisely, we have $\norm{w}\leq\norm{z}+\norm{z-w}$ and therefore $\norm{w}\leq\norm{z}+\frac{1}{k+1}$ whenever $\norm{z-w}\leq\frac{1}{k+1}$ and $(\dagger)$ can then be expressed by
\begin{align*}
&\forall k^0,x^X,y^X,z^X\exists w^X\preceq_X\left(\norm{z}+\frac{1}{k+1}\right)1_X\bigg(\norm{x-y}<\frac{1}{\varpi(k)+1}\\
&\qquad\qquad\qquad\qquad\qquad\qquad\qquad\land z\in Ax\rightarrow w\in Ay\land\norm{z-w}\leq\frac{1}{k+1}\bigg)\tag{$\mathrm{UC}^*$}
\end{align*}
which is of the form $\Delta$ (where one notes that the inner matrix is purely universal with quantifiers of low enough type) and can be added to the previous systems together with an additional constant $\varpi$.

\smallskip

Examples of the use of this axiom in the context of proof mining appear in, e.g., \cite{KP2020,Pis2022b}. In \cite{KP2020}, the authors analyze (among various other results) a theorem due to \cite{MN2001} which requires a uniformly continuous operator $A$ and the analysis provided in \cite{KP2020} can be seen as an application of the upcoming metatheorems (modulo some additional considerations). In particular, uniform continuity is transformed into a modulus $\varpi$ of uniform continuity w.r.t. $H^*$ which can be treated as detailed above.

The additional considerations mentioned before relate to the treatment of the notion of uniform quasi-accretivity due to \cite{GF2005}, which is upgraded in the analysis to a modulus $\Theta$ of uniform accretivity at zero as originally introduced in \cite{KKA2015}, i.e. a function such that
\[
\forall\varepsilon,K>0\forall (x,u)\in\mathrm{gra}A\left(\norm{x-q}\in [\varepsilon,K]\rightarrow \exists j\in J(x-q)(\langle u,j\rangle\geq\Theta_K(\varepsilon)\right).
\]
Based on a suitable treatment of the normalized duality mapping $J$ (see e.g. \cite{KL2012}), it should be possible to phrase the above quantitative notion of uniform accretivity at zero as an axiom of type $\Delta$ but we do not explore this here further.

\smallskip

A further discussion on the weak extensionality principle and the corresponding notion of uniform continuity w.r.t $H^*$ regarding its use in the context of the analysis of Moudafi's algorithm \cite{Mou2015} will be given in \cite{Pis2022b}.
\section{Bound extraction theorems}\label{sec:boundExtractionThms}
We now establish the proof mining metatheorems for the theories $\mathcal{V}^\omega/\mathcal{T}^\omega/{\mathcal{U}}^{*\omega}$ and the partial variants in the vein of \cite{GeK2008,Koh2005,Koh2008}. Our proof follows the general outline from \cite{Koh2008}. As an abbreviation, we write $\mathcal{C}^\omega$ in the following for one of the partial or total systems. Further, we write $\mathcal{C}^{\omega-}$ for $\mathcal{C}^\omega$ \emph{without} $\mathrm{QF}$-$\mathrm{AC}$ and $\mathrm{DC}$. By $(\mathrm{BR})$, we denote the schema of \emph{simultaneous bar-recursion}, going back to the seminal work of Spector \cite{Spe1962}, in all types from $T^X$ (see, e.g., \cite{Koh2008}).

The basis for the upcoming metatheorems, as well as for the previously established ones in the literature, is the utilization of \emph{G\"odel's functional interpretation} (going back to G\"odel's work \cite{Goe1958}, but we mainly use the presentations from \cite{Koh2008,Tro1973}) in combination with a negative translation (which also goes back to G\"odel \cite{Goe1933} and Gentzen, although unpublished in the latter case, but we rely on a version by Kuroda \cite{Kur1951}). We recall the definitions of those interpretations here.
\begin{definition}[\cite{Goe1958,Tro1973}]
The \emph{Dialectica interpretation} $A^D=\exists\underline{x}\forall\underline{y} A_D(\underline{x},\underline{y})$ of a formula $A$ in the language of $\mathcal{C}^\omega$ is defined via the following recursion on the structure of the formula:
\begin{enumerate}
\item[(1)] $A^D:=A_D:=A$ for $A$ being a prime formula.
\end{enumerate}
If $A^D=\exists\underline{x}\forall\underline{y} A_D(\underline{x},\underline{y})$ and $B^D=\exists\underline{u}\forall\underline{v} B_D(\underline{u},\underline{v})$, we set
\begin{enumerate}
\item[(2)] $(A\land B)^D:=\exists\underline{x},\underline{u}\forall\underline{y},\underline{v}(A\land B)_D$\\ where $(A\land B)_D(\underline{x},\underline{u},\underline{y},\underline{v}):=A_D(\underline{x},\underline{y})\land B_D(\underline{u},\underline{v})$,
\item[(3)] $(A\lor B)^D:=\exists z^0\underline{x},\underline{u}\forall\underline{y},\underline{v}(A\land B)_D$\\ where $(A\lor B)_D(z^0,\underline{x},\underline{u},\underline{y},\underline{v}):=(z=0\rightarrow A_D(\underline{x},\underline{y}))\land (z\neq 0\rightarrow B_D(\underline{u},\underline{v}))$,
\item[(4)] $(A\rightarrow B)^D:=\exists\underline{U},\underline{Y}\forall\underline{x},\underline{v}(A\land B)_D$\\ where $(A\rightarrow B)_D(\underline{U},\underline{Y},\underline{x},\underline{v}):=A_D(\underline{x},\underline{Y}\underline{x}\underline{v})\land B_D(\underline{U}\underline{x},\underline{v})$,
\item[(5)] $(\exists z^\tau A(z))^D(z,\underline{x},\underline{y}):=\exists z,\underline{x}\forall\underline{y}(\exists z^\tau A(z))_D$\\ where $(\exists z^\tau A)_D:=A_D(\underline{x},\underline{y},z)$,
\item[(6)] $(\forall z^\tau A)^D:=\exists\underline{X}\forall z,\underline{y}(\forall z^\tau A(z))_D$\\ where $(\forall z^\tau A)_D(\underline{X},z,\underline{y}):=A_D(\underline{X}z,\underline{y},z)$.
\end{enumerate}
\end{definition}
\begin{definition}[\cite{Kur1951}]
The \emph{negative translation} of $A$ is defined by $A':=\neg\neg A^*$ where $A^*$ is defined by the following recursion on the structure of $A$:
\begin{enumerate}
\item[(1)] $A^*:= A$ for prime $A$;
\item[(2)] $(A\circ B)^*:= A^*\circ B^*$ for $\circ\in\{\land,\lor,\rightarrow\}$;
\item[(3)] $(\exists x^\tau A)^*:=\exists x^\tau A^*$;
\item[(4)] $(\forall x^\tau A)^*:=\forall x^\tau \neg\neg A^*$.
\end{enumerate}
\end{definition}
Following \cite{Koh2008}, we introduce a certain class of formulas already mentioned before: by $\Delta$ we in the following denote a set of formulas of the form
\[
\forall\underline{a}^{\underline{\delta}}\exists\underline{b}\preceq_{\underline{\sigma}}\underline{r}\underline{a}\forall\underline{c}^{\underline{\gamma}}F_{qf}(\underline{a},\underline{b},\underline{c})
\]
where $F_{qf}$ is quantifier-free, the types in $\underline{\delta}$, $\underline{\sigma}$ and $\underline{\gamma}$ are admissible and $\underline{r}$ are tuples of closed terms of appropriate type. Here, $\preceq$ is defined by recursion on the type via
\begin{enumerate}
\item[(1)] $x\preceq_0 y:=x\leq_0 y$,
\item[(2)] $x\preceq_X y:=\norm{x}_X\leq_\mathbb{R}\norm{y}_X$,
\item[(3)] $x\preceq_{\tau(\xi)} y:=\forall z^\xi(xz\preceq_\tau yz)$.
\end{enumerate}
Given such a set $\Delta$, we write $\widetilde{\Delta}$ for the set of all Skolem normal forms
\[
\exists\underline{B}\preceq_{\underline{\sigma}(\underline{\delta})}\underline{r}\forall\underline{a}^{\underline{\delta}}\forall\underline{c}^{\underline{\gamma}}F_{qf}(\underline{a},\underline{B}\underline{a},\underline{c})
\]
for any $\forall\underline{a}^{\underline{\delta}}\exists\underline{b}\preceq_{\underline{\sigma}}\underline{r}\underline{a}\forall\underline{c}^{\underline{\gamma}}F_{qf}(\underline{a},\underline{b},\underline{c})$ in $\Delta$.
\begin{lemma}[essentially \cite{GeK2008,Koh2005}]\label{lem:ndinterpretation}
Let $F$ be an arbitrary formula in $\mathcal{C}^\omega$ with only the variables $\underline{a}$ free. Then the rule
\[
\begin{cases}\mathcal{C}^\omega\vdash F(\underline{a})\Rightarrow\\
\mathcal{C}^{\omega-}+(\mathrm{BR})\vdash\forall\underline{a},\underline{y}(F')_D(\underline{t}\underline{a},\underline{y},\underline{a})\end{cases}
\]
holds where $\underline{t}$ is a tuple of closed terms of $\mathcal{C}^{\omega-}+(\mathrm{BR})$ which can be extracted from the respective proof.
\end{lemma}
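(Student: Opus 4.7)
The plan is to proceed by induction on the given proof of $F(\underline{a})$ in $\mathcal{C}^\omega$, following the standard soundness argument for the combination of Kuroda's negative translation $(\cdot)'$ with G\"odel's Dialectica interpretation $(\cdot)^D$ as presented in \cite{Koh2008,Tro1973}. For the logical rules, the defining equations of the combinators, primitive recursors and $\lambda$-abstraction, the induction scheme, the quantifier-free extensionality rule, and all numerical and analytical axioms inherited from $\mathcal{A}^\omega$ and its normed and inner-product extensions, the argument can be transferred essentially verbatim from the corresponding soundness theorem in \cite{Koh2008}. The axiom schema $\mathrm{QF}$-$\mathrm{AC}$ has a trivial Dialectica interpretation, realized by the Skolem functional it posits, while essential uses of $\mathrm{DC}$ are handled via Spector's bar recursion \cite{Spe1962}, which is precisely why the target system must be strengthened by $(\mathrm{BR})$.

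The new content concerns verifying that the non-logical axioms specific to $\mathcal{C}^\omega$---namely axioms (I)--(V) in any of the total variants $\mathcal{V}^\omega$, $\mathcal{T}^\omega$, $\mathcal{U}^\omega$, $\mathcal{U}^{*\omega}$ together with the modifications (II$'$), (II$'_1$), (V), (V$_1$) in the partial versions---admit trivial ND-realizers. The key observation is that once one makes the hidden dependencies on parameters $l^0$ coming from the formal treatment of reciprocals explicit (cf.\ Remark \ref{rem:division}), each such axiom is logically equivalent to a purely universal sentence over admissible types. The intensional resolvent axiom (II), the accretivity, monotonicity and $\rho$-comonotonicity axioms (III), and the anchor axioms $\tilde{\gamma} \geq_\mathbb{R} 2^{-m_{\tilde{\gamma}}}$ and $\tilde{\rho} \geq_\mathbb{R} -\tilde{\gamma} + 2^{-n_{\tilde{\gamma}}}$ are already of this form; the axiom (II$'$) of the partial systems, which on its face carries an existential premise $\exists y^X(\gamma^{-1}(x-_Xy)\in Ay)$, becomes universal after pulling this existential to the front of the implication as an additional universally quantified variable.

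For any such $\forall$-axiom $A \equiv \forall \underline{z}\, A_{qf}(\underline{z})$, the negative translation $A'$ is intuitionistically equivalent to $A$ itself, and its Dialectica interpretation is obtained by decorating $A_{qf}$ with vacuous quantifiers, so that the axiom is trivially self-realizing and remains directly available in $\mathcal{C}^{\omega-} + (\mathrm{BR})$. The new constants $\chi_A$, $J^{\chi_A}$, $c_X$, $\tilde{\gamma}$, $m_{\tilde{\gamma}}$, $\tilde{\rho}$, $n_{\tilde{\gamma}}$ enter only as symbols in the extracted closed terms $\underline{t}$, without otherwise affecting the soundness argument. The main obstacle is thus only syntactic: a case-by-case verification that each axiom governing the operator, its resolvent and the anchor constants genuinely reduces to a universal form of admissible type. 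That this reduction is possible is crucially contingent on having adopted the \emph{intensional} versions of the resolvent and domain axioms throughout; their extensional counterparts would fail to be universally expressible and, as Theorems \ref{thm:extEquivAccretive} and \ref{thm:extequivVT} reveal, would be tantamount to full extensionality of $A$, which indeed cannot be expected to admit a monotone functional interpretation.
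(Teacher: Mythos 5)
Your proposal is correct and follows essentially the same route as the paper, whose proof simply observes that the soundness arguments of \cite{GeK2008,Koh2005} for the base systems $\mathcal{A}^\omega[X,\norm{\cdot}]$ and $\mathcal{A}^\omega[X,\langle\cdot,\cdot\rangle]$ carry over unchanged because all the added operator, resolvent and anchor axioms (after prenexing the existential premises such as $\gamma>_{\mathbb{R}}0$ and $x\in\mathrm{dom}(J^A_\gamma)$, and making the reciprocal parameters explicit) are purely universal and hence self-realizing under the ND-interpretation. The only harmless imprecision is your claim that $A'$ is intuitionistically equivalent to $A$ for universal $A$; the fact actually being used is that purely universal sentences of admissible type have a trivial Dialectica interpretation of their negative translation.
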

We wrote `essentially \cite{GeK2008,Koh2005}' as the proofs presented in \cite{GeK2008,Koh2005} are only formulated for our base systems $\mathcal{A}^\omega[X,\norm{\cdot}]$ and $\mathcal{A}^\omega[X,\langle\cdot,\cdot\rangle]$ but it is immediately clear that they extend to the new constants and systems as all the axioms are purely universal.

The central concept for formulating the quantitative bounds obtained by the metatheorems is that of majorization in the sense of the extension due to \cite{GeK2008,Koh2005} of strong majorization of Bezem \cite{Bez1985} (which in turn builds on Howard's majorizability \cite{How1973}) to the new types in $T^X$. In that way, majorants of objects with types from $T^X$ will be objects with types from $T$ related by the following projection:
\begin{definition}[\cite{GeK2008}]
Define $\widehat{\tau}\in T$, given $\tau\in T^X$, by recursion on the structure via
\[
\widehat{0}:=0,\;\widehat{X}:=0,\;\widehat{\tau(\xi)}:=\widehat{\tau}(\widehat{\xi}).
\]
\end{definition}
The majorizability relation $\gtrsim_\rho$ is then defined by recursion on the type along with the corresponding structure $\mathcal{M}^{\omega,X}$ of all (strongly) majorizable functionals of finite type as defined in \cite{GeK2008,Koh2005}:
\begin{definition}[\cite{GeK2008,Koh2005}]
Let $(X,\norm{\cdot})$ be a non-empty normed space. The structure $\mathcal{M}^{\omega,X}$ and the majorizability relation $\gtrsim_\rho$ are defined by
\[
\begin{cases}
M_0:=\mathbb{N}, n\gtrsim_0 m:=n\geq m\land n,m\in\mathbb{N},\\
M_X:= X, n\gtrsim_X x:= n\geq\norm{x}\land n\in M_0,x\in M_X,\\
x^*\gtrsim_{\tau(\xi)}x:=x^*\in M_{\widehat{\tau}}^{M_{\widehat{\xi}}}\land x\in M_\tau^{M_\xi}\\
\qquad\qquad\qquad\land\forall y^*\in M_{\widehat{\xi}},y\in M_\xi(y^*\gtrsim_\xi y\rightarrow x^*y^*\gtrsim_\tau xy)\\
\qquad\qquad\qquad\land\forall y^*,y\in M_{\widehat{\xi}}(y^*\gtrsim_{\widehat{\xi}}y\rightarrow x^*y^*\gtrsim_{\widehat{\tau}}x^*y),\\
M_{\tau(\xi)}:=\left\{x\in M_\tau^{M_\xi}\mid \exists x^*\in M^{M_{\widehat{\xi}}}_{\widehat{\tau}}:x^*\gtrsim_{\tau(\xi)}x\right\}.
\end{cases}
\]
Correspondingly, the structure $\mathcal{S}^{\omega,X}$ is defined as the full set-theoretic type structure via $S_0:=\mathbb{N}$, $S_X:= X$ and
\[
S_{\tau(\xi)}:=S_{\tau}^{S_{\xi}}.
\]
\end{definition}
For an inner product space, the structures $\mathcal{S}^{\omega,X}$ and $\mathcal{M}^{\omega,X}$ are defined via the norm induced by the inner product.

Now, majorization behaves as expected for functionals with multiple arguments (represented by their `curryied' variants) as the following lemma shows:
\begin{lemma}[\cite{GeK2008,Koh2005}, see also Kohlenbach \cite{Koh2008}, Lemma 17.80]\label{lem:majlemma}
Let $\xi=\tau\xi_k\dots\xi_1$. For $x^*:M_{\widehat{\xi_1}}\to(M_{\widehat{\xi_2}}\to\dots\to M_{\widehat{\tau}})\dots)$ and $x:M_{\xi_1}\to (M_{\xi_2}\to\dots\to M_\tau)\dots)$, we have $x^*\gtrsim_\xi x$ iff
\begin{enumerate}
\item[(a)] $\forall y_1^*,y_1,\dots,y_k^*,y_k\left(\bigwedge_{i=1}^k(y^*_i\gtrsim_{\xi_i}y_i)\rightarrow x^*y^*_1\dots y^*_k\gtrsim_\tau xy_1\dots y_k\right)$ and 
\item[(b)] $\forall y_1^*,y_1,\dots,y_k^*,y_k\left(\bigwedge_{i=1}^k(y^*_i\gtrsim_{\widehat{\xi_i}}y_i)\rightarrow x^*y^*_1\dots y^*_k\gtrsim_{\widehat{\tau}} x^*y_1\dots y_k\right)$.
\end{enumerate}
\end{lemma}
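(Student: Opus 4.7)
The plan is to prove the lemma by induction on the arity $k\geq 0$, which is the number of arguments that must still be consumed before reaching the output type $\tau$.

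For the base case $k=0$, we have $\xi = \tau$, the empty conjunctions in (a) and (b) are vacuous, and what remains is precisely $x^*\gtrsim_\tau x$ on the one hand and $x^*\gtrsim_{\widehat{\tau}} x^*$ on the other. When $\tau\in\{0,X\}$ the latter is reflexivity of $\geq$ on $\mathbb{N}$, and for higher $\tau$ it is an immediate consequence of $x^*\gtrsim_\tau x$ by unpacking the recursive definition of majorizability (which already bundles the self-majorization condition of the majorant into the clause at each application step).

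For the inductive step I would split $\xi = \xi'(\xi_1)$ with $\xi' = \tau\xi_k\cdots\xi_2$ and apply the definition of majorization at type $\xi'(\xi_1)$. This gives two sub-conditions on $x^*$ and $x$: (I) for every $y_1^*\gtrsim_{\xi_1} y_1$ one has $x^* y_1^* \gtrsim_{\xi'} x y_1$, and (II) for every $y_1^*\gtrsim_{\widehat{\xi_1}} y_1$ one has $x^* y_1^*\gtrsim_{\widehat{\xi'}} x^* y_1$. Now invoke the inductive hypothesis on each. Applied to (I) for the pair $(x^* y_1^*, x y_1)$ of arity $k-1$ at type $\xi'$, it expands the relation into (a) restricted to the indices $2,\dots,k$ together with the corresponding self-majorization clause for $x^* y_1^*$. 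Applied to (II) at the hat type, using the idempotence $\widehat{\widehat{\,\cdot\,}} = \widehat{\,\cdot\,}$ on $T$, it expands $x^* y_1^*\gtrsim_{\widehat{\xi'}} x^* y_1$ into the $y_1^*$-instance of (b) at arity $k$. Collecting the resulting clauses yields precisely (a) and (b) at full arity, and the converse implication is obtained by running the same unfolding backward, each step being an equivalence.

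The main obstacle is not conceptual but book-keeping: one must carefully match up which clauses originate from (I) versus (II) and check that the self-majorization clauses produced by applying the inductive hypothesis to (I) coincide with instances already supplied by (II) — so that the two halves of the equivalence line up exactly, with nothing extra needed and nothing lost. The only delicate point in this bookkeeping is that the inductive hypothesis must be applied to (II) at the \emph{hat} type $\widehat{\xi'}$, which is what forces the symmetric reformulation of clause (b) in terms of $\gtrsim_{\widehat{\xi_i}}$ rather than $\gtrsim_{\xi_i}$.
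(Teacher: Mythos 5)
Your argument is correct and is essentially the standard proof of this lemma; note that the paper itself gives no proof but cites it from the literature (it is Lemma 17.80 in \cite{Koh2008}, going back to \cite{GeK2008,Koh2005} and, for the types in $T$, to Bezem), and the proof there is exactly this induction on the number of arguments, unfolding the definition of $\gtrsim_{\xi'(\xi_1)}$ into the two clauses (I) and (II) and applying the induction hypothesis to each, using idempotence of $\widehat{\,\cdot\,}$ for clause (II). The one point you gesture at but should state explicitly as an auxiliary fact (needed both in your base case and in the backward direction, where you instantiate clause (b) with the pair $(y_1^*,y_1^*)$) is that $y^*\gtrsim_\rho y$ implies $y^*\gtrsim_{\widehat{\rho}}y^*$; this is not reflexivity of $\gtrsim$ on $M_{\widehat{\rho}}$ (which fails in general) but follows by a separate easy induction on $\rho$ from the self-majorization clause built into the definition together with $\widehat{\widehat{\rho}}=\widehat{\rho}$. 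With that fact isolated, the bookkeeping you describe closes without gaps.
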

The proof of the main bound extraction result now relies on a combination of functional interpretation and negative translation together with subsequent majorization. The following lemma gives the main result for the latter ingredient (akin to, e.g., Lemma 9.9 in \cite{GeK2008}).
\begin{lemma}\label{lem:majmainresult}
Let $(X,\norm{\cdot})$ be a normed space, $A$ an m-accretive operator and $J^A_\gamma$ its resolvent with parameter $\gamma>0$. Then $\mathcal{M}^{\omega,X}$ is a model of $\mathcal{V}^{\omega-}+(\mathrm{BR})$ (for a suitable interpretation of the additional constants). Moreover, for any closed term $t$ of $\mathcal{V}^{\omega-}+(\mathrm{BR})$, one can construct a closed term $t^*$ of $\mathcal{A}^\omega+(\mathrm{BR})$ such that
\begin{align*}
&\mathcal{M}^{\omega,X}\models\forall n^0,m^0,l^0,k^0\bigg(n\geq_\mathbb{R}\norm{c_X-J^A_{\tilde{\gamma}}c_X}_X\land m\geq_0m_{\tilde{\gamma}}\\
&\qquad\qquad\qquad\qquad\land\; l\geq_\mathbb{R}\vert\tilde{\gamma}\vert\land k\geq_\mathbb{R}\norm{c_X}_X\rightarrow t^*(n,m,l,k)\gtrsim t\bigg).
\end{align*}
Further, the same claim holds for $\mathcal{V}^\omega$ replaced with 
\begin{enumerate}
\item[(1)] $\mathcal{T}^\omega$ where the conclusion is then drawn over inner product spaces with maximally monotone $A$ or $\mathcal{T}^\omega$ extended by $A^\circ_X$ and the axioms $(\mathrm{Y})$ if the space is further a Hilbert spaces and $A$ a majorizable operator where $t^*$ then depends on an additional parameter $g^1$ with the assumption $g\gtrsim_{X(X)}A^\circ_X$ added to the premise,
\item[(2)] $\mathcal{U}^{*\omega}$ where the conclusion is drawn over inner product spaces with maximally $\rho$-comonotone $A$ where $\rho>-r_{\tilde{\gamma}}/2$ and $t^*$ depends on two additional parameters $o^0,p^0$ with the assumptions $o\geq_0 n_{\tilde{\gamma}}$ and $p\geq_\mathbb{R}\vert\tilde{\rho}\vert$ added to the premise,
\item[(3)] the partial systems $\mathcal{V}^\omega_p$, $\mathcal{T}^\omega_p$ and $\mathcal{U}^{*\omega}_p$ where the conclusion is drawn over the appropriate spaces and operators, assuming that $\bigcap_{\gamma>0}\mathrm{dom}(J^A_\gamma)\neq\emptyset$ (where the intersection is constructed over all $\gamma$ where additionally $\rho>-\gamma/2$ in the case of $\mathcal{U}^{*\omega}_p$ and $t^*$ depends on two additional parameters $o^0,p^0$ as above),
\item[(4)] any of the above system extended by $\varpi$ and the axiom $(\mathrm{UC}^*)$ if the operator is uniformly continuous w.r.t. $H^*$ where the term $t^*$ then depends on an additional parameter $h^1$ with additional assumption $h\gtrsim_{0(0)}\varpi$.
\end{enumerate} 
\end{lemma}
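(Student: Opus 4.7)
The plan is to proceed in two phases, following the standard schema for such majorization results (cf.\ Lemma 17.81 of \cite{Koh2008} and Lemma 9.9 of \cite{GeK2008}): first show that the type structure $\mathcal{M}^{\omega,X}$ can be made into a model of $\mathcal{V}^{\omega-} + (\mathrm{BR})$ (and likewise for the other systems) by giving suitable semantic interpretations to the new constants, and then verify by induction on the build-up of closed terms that each such term admits a majorant constructible in $\mathcal{A}^\omega + (\mathrm{BR})$. The model step is easy for the constants of $\mathcal{A}^\omega[X,\norm{\cdot}]$ (their interpretations have already been handled in the literature), and for the new constants one simply puts $\chi_A$ = characteristic function of $\mathrm{gra}\, A$, $J^{\chi_A}(\gamma,x) = J^A_\gamma x$ (extended arbitrarily off the domain in the partial systems), $\tilde{\gamma}$ a chosen positive index, $m_{\tilde{\gamma}}$ a witness for $\tilde{\gamma} \geq 2^{-m_{\tilde{\gamma}}}$, and $c_X$ an anchor point in $X$ (in the partial case, chosen so that $c_X \in \bigcap_\gamma \mathrm{dom}(J^A_\gamma)$, which is nonempty by assumption). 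The relevant axioms (I)--(V) are purely universal and hold by the analytic properties of accretive / comonotone operators and their resolvents established in Section~\ref{sec:optheory}.

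The bulk of the work is the construction of $t^*$. The standard induction on terms and Spector's argument for $(\mathrm{BR})$ reduce the task to exhibiting majorants for the individual constants. All the constants of $\mathcal{A}^\omega[X,\norm{\cdot}]$ have standard majorants constructed in \cite{GeK2008,Koh2005}. The constants $\tilde{\gamma}$, $m_{\tilde{\gamma}}$ and $c_X$ are majorized directly by $l$, $m$ and $k$ respectively (using that $l \geq_{\mathbb{R}} |\tilde{\gamma}|$ yields a type-$1$ majorant via the canonical $(\cdot)_\circ$-lift from Section~\ref{hijis}), and $\chi_A$ is majorized by the constant $1^{0(0)(0)}$ since it only takes values in $\{0,1\}$. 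The one step where real analytic work enters is the majorization of $J^{\chi_A}$, which I expect to be the main obstacle.

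For $J^{\chi_A}$ of type $X(X)(1)$ I will use nonexpansivity together with the fundamental resolvent equality to propagate the single displacement bound $n \geq \norm{c_X -_X J^A_{\tilde{\gamma}} c_X}_X$ into a global majorant. Concretely, Proposition~\ref{pro:respropaccr}~(3) gives
\[
  \norm{J^A_\gamma x}_X \leq \norm{J^A_\gamma x - J^A_\gamma c_X}_X + \norm{J^A_\gamma c_X - c_X}_X + \norm{c_X}_X \leq \norm{x}_X + 2\norm{c_X}_X + \norm{c_X - J^A_\gamma c_X}_X,
\]
and Proposition~\ref{pro:resolventprop}~(2) bounds the last term by $(2 + \gamma/\tilde{\gamma}) n \leq (2 + l \cdot 2^m) n$ using $\tilde{\gamma} \geq 2^{-m_{\tilde{\gamma}}}$. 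Together with the monotone behaviour of $(\cdot)_\circ$-lifts and Lemma~\ref{lem:majlemma}, this produces an explicit closed term $J^*$ of $\mathcal{A}^\omega$ with $J^*(l,m,n,k)(\gamma^*, x^*) \gtrsim_X J^A_\gamma x$ whenever the parameters majorize appropriately, which is precisely what is needed. In the monotone case ($\mathcal{T}^\omega$) this argument goes through verbatim; in the $\rho$-comonotone case ($\mathcal{U}^{*\omega}$) one uses Proposition~\ref{pro:UFundTheorems}~(7) and Proposition~\ref{pro:resolventpropU}~(2) (with the revised bounds from Remark~\ref{rem:UstarProp}), which accounts for the extra parameters $o \geq_0 n_{\tilde{\gamma}}$ and $p \geq_{\mathbb{R}} |\tilde{\rho}|$ needed to control $\alpha = \frac{1}{2(\tilde{\rho}/\gamma + 1)}$ and the associated displacement estimates; the partial systems are analogous, using the results from Proposition~\ref{pro:partialFundProp} and Proposition~\ref{pro:fundrespropPartial}, where the assumption $c_X \in \bigcap_\gamma \mathrm{dom}(J^A_\gamma)$ ensures the relevant resolvent evaluations are well-defined.

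The remaining extensions present no essential new difficulty. For $\mathcal{T}^\omega$ enlarged by $A^\circ_X$ with axioms $(\mathrm{Y})$ over a Hilbert space with majorizable $A$, Proposition~\ref{pro:Acircmaj} delivers a majorant $g$ for $A^\circ_X$ which is simply fed in as an additional parameter of $t^*$, and the axioms $(\mathrm{Y1})$, $(\mathrm{Y2})$ are universal so they hold in $\mathcal{M}^{\omega,X}$. For the extension by $\varpi$ and $(\mathrm{UC}^*)$, the function $\varpi$ is of type $1$ and a majorant $h \gtrsim_{0(0)} \varpi$ is an additional input; since $(\mathrm{UC}^*)$ is of type $\Delta$ (the existential on $w$ is bounded by $\norm{z} + \frac{1}{k+1}$ via the closed term shown in Section~\ref{sec:modunifcont}), its Skolem functional has an obvious majorant in terms of $h$, $z$, $k$. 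Collecting these pieces, the closed term $t^*$ of $\mathcal{A}^\omega + (\mathrm{BR})$ majorizing $t$ (relative to the stated parameters) is constructed by the usual structural induction, yielding the claim in each of the cases (1)--(4).
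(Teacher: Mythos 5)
Your proposal follows essentially the same route as the paper: interpret the new constants in $\mathcal{M}^{\omega,X}$, note that the defining axioms are purely universal and hence hold by the analytic properties of the operators, and then majorize each constant, the only substantive step being the majorant for $J^{\chi_A}$ obtained from nonexpansivity combined with Proposition~\ref{pro:resolventprop}~(2) (and its analogues for the comonotone and partial systems); the treatment of $A^\circ_X$ via Proposition~\ref{pro:Acircmaj} and of $\varpi$ also matches the paper. One slip to correct: in your displayed chain you bound $\gamma/\tilde{\gamma}\leq l\cdot 2^m$, but $l$ majorizes the \emph{constant} $\tilde{\gamma}$, not the resolvent index $\gamma$ being applied; the correct estimate uses a majorant of $\gamma$ itself, as in the paper's term $\lambda\alpha,x^*.\,x^*+2k+(2+2^m(\alpha(0)+1))n$ with $\alpha\gtrsim\gamma$ --- which is in fact consistent with your own final majorant $J^*(l,m,n,k)(\gamma^*,x^*)$ taking $\gamma^*$ as an argument, so this is a notational rather than structural defect.
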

\begin{proof}
We only verify the result for the systems $\mathcal{V}^\omega$ and $\mathcal{U}^{*\omega}$ and only for the new constants $\chi_A$, $J^{\chi_A}$, $\tilde{\gamma}$, $c_X$, $m_{\tilde{\gamma}}$ as well as potentially $n_{\tilde{\gamma}}$ and $\tilde{\rho}$. The rest follows as in \cite{Koh2008}, Lemma 17.85. We first deal with the non-partial case. The designated interpretation of the constant $\chi_A$ in the model $\mathcal{M}^{\omega,X}$ is given by
\[
[\chi_A]_\mathcal{M}:=\lambda x,y\in X.\begin{cases}0^0&\text{if }y\in Ax,\\1^0&\text{if }y\not\in Ax,\end{cases}
\]
while the constant for the resolvent in interpreted by
\[
[J^{\chi_A}]_\mathcal{M}:=\lambda\alpha\in\mathbb{N}^\mathbb{N},x\in X.\begin{cases}J^A_{r_\alpha}x&\text{if }r_\alpha> 0,\\0&\text{otherwise},\end{cases}
\]
where $r_\alpha$ is the real represented by $\alpha$ as before. We set $[\tilde{\gamma}]_\mathcal{M}:=(\lambda)_\circ$ and $[m_{\tilde{\gamma}}]_\mathcal{M}:=m_\lambda$ for some real $\lambda$ and natural $m_\lambda$ with $\lambda\geq 2^{-m_\lambda}$. Lastly, in the case of the total systems, we define $[c_X]_\mathcal{M}:=c$ for some arbitrary $c\in X$.

These constants are then majorizable (and their interpretations thus belong to $\mathcal{M}^{\omega,X}$). For $\chi_A$,
\[
\lambda x^0,y^0.1\gtrsim \chi_A
\]
is immediate by the previous Lemma \ref{lem:majlemma} and by the fact that $\mathcal{V}^\omega$/$\mathcal{T}^\omega$/${\mathcal{U}}^{*\omega}$ prove that $\chi_Axy\leq_01$. For $J^{\chi_A}$, given $n\geq\norm{c_X-J^A_{\tilde{\gamma}}c_X}$ and $k\geq\norm{c_X}$ as well as $m\geq m_{\tilde{\gamma}}$, we obtain
\[
\lambda\alpha^1,{x^*}^0.x^*+2k+(2+2^{m}(\alpha(0)+1))n\gtrsim J^{\chi_A}
\]
To see that, let $\gamma^1,x^X$ be given and let $x^*\gtrsim x$, i.e. $x^*\geq\norm{x}$, as well as $\alpha\gtrsim\gamma$. Then in particular (with similar reasoning as in \cite{Koh2008}, Lemma 17.85)
\[
\alpha(0)+1\geq\gamma(0)+1\geq\vert\gamma\vert.
\]
Now, we have (even provably in $\mathcal{V}^\omega$/$\mathcal{T}^\omega$/${\mathcal{U}}^{*\omega}$) that for $r_\gamma> 0$:
\begin{align*}
\norm{J^A_\gamma x}&\leq\norm{x-c_X}+\norm{J^A_\gamma c_X}&&\text{(nonexpansivity)}\\
&\leq\norm{x}+\norm{c_X}+\norm{c_X-J^A_{\gamma}c_X}+\norm{c_X}&&\\
&\leq\norm{x}+2\norm{c_X}+\left(2+\frac{\gamma}{\tilde{\gamma}}\right)\norm{c_X-J^A_{\tilde{\gamma}}c_X}&&\text{(Proposition \ref{pro:resolventprop}/Remark \ref{rem:resolventprop})}\\
&\leq x^*+2k+(2+2^{m}(\alpha(0)+1))n
\end{align*}
For $r_\gamma\leq 0$, we get that $J^A_\gamma x=0$. Thus, $\norm{J^A_\gamma x}=0\leq x^*+2k+(2+2^{m}(\alpha(0)+1))n$ in that case as well. This implies majorizability using Lemma \ref{lem:majlemma}. Lastly, $\lambda i^0.(l+1)\gtrsim_{0(0)}\tilde{\gamma}$, $m\geq_0m_{\tilde{\gamma}}$ and $k\gtrsim_{X} c_X$ are immediate by the assumptions on $l,m$ and $k$, respectively.

In the case of $\mathcal{U}^{*\omega}$, we set $[\tilde{\rho}]_\mathcal{M}=(\rho)_\circ$. Further, in that case the interpretation of the resolvent changes to
\[
[J^{\chi_A}]_\mathcal{M}:=\lambda\alpha\in\mathbb{N}^\mathbb{N},x\in X.\begin{cases}J^A_{r_\alpha}x&\text{if }r_\alpha> 0\text{ and }\rho>- r_\alpha/2,\\0&\text{otherwise},\end{cases}
\]
and $\tilde{\gamma}$, $m_{\tilde{\gamma}}$ as well as $n_{\tilde{\gamma}}$ are now interpreted by $\lambda$, $m_\lambda$ and $n_\lambda$ such that $\lambda\geq 2^{-m_\lambda}$ as well as $\rho\geq -\lambda/2+2^{-n_\lambda}$. Then, the above argument for majorization still goes through for $r_\gamma>0$ and $\rho>-r_\gamma/2$, however noting Proposition \ref{pro:resolventpropU} and Remark \ref{rem:UstarProp}. Majorants for $m_{\tilde{\gamma}}$, $n_{\tilde{\gamma}}$, $\tilde{\gamma}$, $c_X$ and $\tilde{\rho}$ are immediate as before.

\smallskip

In the partial case, let $c\in\mathrm{dom}(J^A_\gamma)$ for any $\gamma> 0$ (with $\rho>-\gamma/2$ in the case of ${\mathcal{U}}^{*\omega}_p$) and define $[c_X]_\mathcal{M}:=c$. Now, the resolvent is interpreted by
\[
[J^{\chi_A}]_\mathcal{M}:=\lambda\alpha\in\mathbb{N}^\mathbb{N},x\in X.\begin{cases}J^A_{r_\alpha}x&\text{if }r_\alpha> 0\text{ and }x\in\mathrm{dom}(J^A_{r_\alpha}),\\0&\text{otherwise},\end{cases}
\]
in the case of $\mathcal{V}^\omega_p$ and by
\[
[J^{\chi_A}]_\mathcal{M}:=\lambda\alpha\in\mathbb{N}^\mathbb{N},x\in X.\begin{cases}J^A_{r_\alpha}x&\text{if }r_\alpha> 0,\rho>-r_\alpha/2\text{ and }x\in\mathrm{dom}(J^A_{r_\alpha}),\\0&\text{otherwise},\end{cases}
\]
in the case of $\mathcal{U}^{*\omega}_p$. The argument for majorizability of $J^A_\gamma$ is the same as before, just restricting to $x\in\mathrm{dom}(J^A_{ r_\gamma})$ and using nonexpansivity on the domain and Proposition \ref{pro:fundrespropPartial}. The other constants are interpreted and majorized as before.

Note that the corresponding extensions of $\mathcal{M}^{\omega,X}$ to the new constants are indeed models of the theory as none of the axioms for $J^{\chi_A}$ prescribe behavior of the resolvent for $\gamma\leq 0$ (or $\gamma$ with $\rho\leq -\gamma/2$ in the case of ${\mathcal{U}}^{*\omega}$).

\smallskip

Lastly, for the potential constants $A^\circ_X$ or $\varpi$: for $A^\circ$, naturally $A^\circ$ exists on $\mathrm{dom}A$ for a maximally monotone $A$ on a Hilbert space (which is assumed) and we set $[A^\circ]_\mathcal{M}(x):=A^\circ x$ for $x\in\mathrm{dom}A$ and $[A^\circ]_\mathcal{M}(x):=0$ otherwise which is majorizable as $A$ is assumed to be majorizable in that case (see Proposition \ref{pro:Acircmaj}).

For an operator $A$ which is uniformly continuous w.r.t. $H^*$, $\varpi$ is naturally interpreted by a respective modulus which is majorizable as it is a functional of type $0(0)$.
\end{proof}
We now formulate the bound extraction theorem. The potential additional axioms $\Delta$ are treated in spirit of the so-called monotone functional interpretation due to \cite{Koh1996} (and conceptually already to \cite{Koh1990,Koh1992a,Koh1992b}). 

We say in spirit of the monotone functional interpretation as we actually don't use a monotone variant of the functional interpretation but treat the functional interpretation part and the subsequent majorization separately, which nevertheless allows one to treat the axioms of type $\Delta$ similarly as in
\begin{enumerate}
\item[(1)] Corollary 6.5 (see also Theorem 3.30 in \cite{Koh2005}) is derived from Theorem 6.3 in \cite{GeK2008} (or Corollary 17.70 from Theorem 17.69 in \cite{Koh2008}),
\item[(2)] Corollary 5.14 is obtained from Theorem 5.13 in \cite{GuK2016}. 
\end{enumerate}
For that, we need the following lemma:
\begin{lemma}[\cite{GuK2016}, Lemma 5.11]
Let $\Delta$ be a set of formulas of the form considered before. Then $\mathcal{S}^{\omega,X}\models\Delta$ implies $\mathcal{M}^{\omega,X}\models\widetilde{\Delta}$.
\end{lemma}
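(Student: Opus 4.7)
The plan is to extract, for each formula in $\Delta$, a Skolem functional $\underline{B}$ witnessing the corresponding formula in $\widetilde{\Delta}$ by invoking the axiom of choice in the metatheory, and then to establish that $\underline{B}$ lies in $\mathcal{M}^{\omega,X}$ by exhibiting a majorant for it constructed from a majorant of the closed-term bound $\underline{r}$.

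In detail, I would fix an arbitrary formula $\forall\underline{a}^{\underline{\delta}}\exists\underline{b}\preceq_{\underline{\sigma}}\underline{r}\underline{a}\forall\underline{c}^{\underline{\gamma}}F_{qf}(\underline{a},\underline{b},\underline{c})$ from $\Delta$. Since this formula holds in $\mathcal{S}^{\omega,X}$, metatheoretic AC produces a choice functional $\underline{B}\colon S_{\underline{\delta}}\to S_{\underline{\sigma}}$ satisfying both $\underline{B}\underline{a}\preceq_{\underline{\sigma}}\underline{r}\underline{a}$ and $\forall\underline{c}\,F_{qf}(\underline{a},\underline{B}\underline{a},\underline{c})$ for each $\underline{a}$. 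Since $\preceq$ at higher admissible types unfolds pointwise, $\underline{B}\preceq_{\underline{\sigma}(\underline{\delta})}\underline{r}$ and the quantifier-free matrix of the corresponding formula in $\widetilde{\Delta}$ are immediate, so the only remaining content is to verify $\underline{B}\in M_{\underline{\sigma}(\underline{\delta})}$. Because $\underline{r}$ is a tuple of closed terms of $\mathcal{C}^{\omega-}$, applying the prior Lemma \ref{lem:majmainresult} supplies majorants $\underline{r}^*$ of $\underline{r}$ in $\mathcal{M}^{\omega,X}$; I would then verify $\underline{r}^*\gtrsim_{\underline{\sigma}(\underline{\delta})}\underline{B}$, placing $\underline{B}$ in $M_{\underline{\sigma}(\underline{\delta})}$ and thereby giving $\mathcal{M}^{\omega,X}\models\widetilde{\Delta}$.

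The last step proceeds via Lemma \ref{lem:majlemma}: it suffices to show $\underline{r}^*\underline{a}^*\gtrsim_{\underline{\sigma}}\underline{B}\underline{a}$ for all $\underline{a}^*\gtrsim_{\underline{\delta}}\underline{a}$, since the `hat' clause required by the lemma is already satisfied by the fact that $\underline{r}^*\in\mathcal{M}^{\omega,X}$ is itself majorizable. Given $\underline{a}^*\gtrsim\underline{a}$, the relation $\underline{r}^*\gtrsim\underline{r}$ yields $\underline{r}^*\underline{a}^*\gtrsim_{\underline{\sigma}}\underline{r}\underline{a}$, and combining this with $\underline{B}\underline{a}\preceq_{\underline{\sigma}}\underline{r}\underline{a}$ reduces the goal to the following auxiliary lifting principle, which I expect to be the main technical obstacle:
\[
x\preceq_\sigma y\;\text{ and }\;y^*\gtrsim_\sigma y\;\Longrightarrow\;y^*\gtrsim_\sigma x
\]
for any admissible type $\sigma$. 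This is established by induction on $\sigma=\sigma_0(\tau_k)\cdots(\tau_1)$ with $\sigma_0\in\{0,X\}$: the base cases are immediate ($y^*\geq y\geq x$ at type $0$, and $y^*\geq\norm{y}\geq\norm{x}$ at type $X$), while the inductive step is handled by applying $\preceq$ to an arbitrary argument and $\gtrsim$ to a majorant thereof, then invoking the IH together with the monotonicity clause built into $\gtrsim$. The delicate aspect is the asymmetry between $\preceq$ (pointwise over all inputs) and $\gtrsim$ (only over majorizable inputs, with an extra `hat' monotonicity requirement); but since only the first clause of $\gtrsim$ is used in the chain and admissibility of $\sigma$ keeps the relevant argument types small, the chaining goes through once carefully unfolded.
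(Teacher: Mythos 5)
Your proposal is correct and is essentially the argument that the paper delegates to the cited reference: Skolemize the $\Delta$-sentence over $\mathcal{S}^{\omega,X}$ by metatheoretic choice, majorize the closed bounding terms $\underline{r}$ via the majorization lemma, and transfer the majorant from $\underline{r}$ to $\underline{B}$ along $\preceq$ using exactly the lifting principle $x\preceq_\sigma y\land y^*\gtrsim_\sigma y\Rightarrow y^*\gtrsim_\sigma x$ (proved by induction on the type), noting that the second (``hat'') clause of $\gtrsim$ concerns only $\underline{r}^*$ and so carries over unchanged. The remaining details you leave implicit — absoluteness of the quantifier-free matrix between $\mathcal{M}^{\omega,X}$ and $\mathcal{S}^{\omega,X}$, and restricting $\underline{B}$ to $M_{\underline{\delta}}$ — are unproblematic, so the argument goes through as stated.
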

\begin{proof}
The proof given in \cite{GuK2016} for Lemma 5.11 carries over.
\end{proof}
\begin{theorem}\label{thm:metatheoremMonOp}
Let $\tau$ be admissible, $\delta$ be of degree $1$ and $s$ be a closed term of $\mathcal{V}^\omega$ of type $\sigma(\delta)$ for admissible $\sigma$. Let $B_\forall(x,y,z,u)$/$C_\exists(x,y,z,v)$ be $\forall$-/$\exists$-formulas of $\mathcal{V}^{\omega}$ with only $x,y,z,u$/$x,y,z,v$ free. If
\[
\mathcal{V}^{\omega}+\Delta\vdash\forall x^\delta\forall y\preceq_\sigma s(x)\forall z^\tau\left(\forall u^0 B_\forall(x,y,z,u)\rightarrow\exists v^0 C_\exists(x,y,z,v)\right),
\]
then one can extract a partial functional $\Phi:S_{\delta}\times S_{\widehat{\tau}}\times\mathbb{N}^4\rightharpoonup\mathbb{N}$ which is total and (bar-recursively) computable on $\mathcal{M}_\delta\times\mathcal{M}_{\widehat{\tau}}\times\mathbb{N}^4$ and such that for all $x\in S_\delta$, $z\in S_\tau$, $z^*\in S_{\widehat{\tau}}$ and all $n,m,l,k\in\mathbb{N}$, if $z^*\gtrsim z$, $n\geq_\mathbb{R}\norm{c_X-J^A_{\tilde{\gamma}}(c_X)}_X$, $m\geq_0m_{\tilde{\gamma}}$, $l\geq_\mathbb{R}\vert\tilde{\gamma}\vert$ and $k\geq_\mathbb{R}\norm{c_X}_X$, then
\begin{align*}
&\mathcal{S}^{\omega,X}\models\forall y\preceq_\sigma s(x)\big(\forall u\leq\Phi(x,z^*,n,m,l,k) B_\forall(x,y,z,u)\\
&\qquad\qquad\qquad\qquad\qquad\rightarrow\exists v\leq\Phi(x,z^*,n,m,l,k)C_\exists(x,y,z,v)\big)
\end{align*}
holds whenever $\mathcal{S}^{\omega,X}\models\Delta$ for $\mathcal{S}^{\omega,X}$ defined via any normed space $(X,\norm{\cdot})$ with $\chi_A$ interpreted by the characteristic function of an m-accretive $A$ and $J^{\chi_A}$ by corresponding resolvents $J^A_\gamma$ for $\gamma>0$ (and the other constants accordingly).

In particular:
\begin{enumerate}
\item[(1)] If $\widehat{\tau}$ is of degree $1$, then $\Phi$ is a total computable functional. 
\item[(2)] We may have tuples instead of single variables $x,u,v$ and a finite conjunction instead of a single premise $\forall u^0 B_\forall(x,u)$.
\item[(3)] If the claim is proved without $\mathrm{DC}$, then $\tau$ may be arbitrary and $\Phi$ will be a total functional on $S_\delta\times S_{\widehat{\tau}}\times\mathbb{N}^4$ which is primitive recursive in the sense of G\"odel.
\item[(4)] The claim of the above theorem as well as the items (1) - (3) from above holds similarly for
\begin{enumerate}
\item $\mathcal{T}^\omega$ where the conclusion is then drawn over inner product spaces with maximally monotone operators or $\mathcal{T}^\omega$ extended by $A^\circ_X$ and the axioms $(\mathrm{Y})$ with the additional assumption $g\gtrsim_{X(X)}A^\circ_X$, $\Phi$ depending additionally on $g$ and the conclusion being drawn over Hilbert spaces,
\item $\mathcal{U}^{*\omega}$ where the conclusion is drawn over inner product spaces with maximally $\rho$-comonotone operators and resolvents $J^A_{\gamma}$ for $\rho>-\gamma/2$ (where $\Phi$ depends on two additional parameters $o^0,p^0$ with the additional assumptions $o\geq_0 n_{\tilde{\gamma}}$ and $p\geq_\mathbb{R}\vert\tilde{\rho}\vert$),
\item $\mathcal{V}^\omega_p$, $\mathcal{T}^\omega_p$ and $\mathcal{U}^{*\omega}_p$ where the conclusion is drawn over the appropriate spaces and operators such that, in particular, $\bigcap_{\gamma>0}\mathrm{dom}J^A_\gamma\neq\emptyset$ (with $\gamma$ additionally satisfying $\rho>-\gamma/2$ and $\Phi$ depending on two additional parameters $o^0,p^0$ as above in the case of $\mathcal{U}^{*\omega}_p$),
\item the above systems extended by the constant $\varpi$ and axiom $(\mathrm{UC}^*)$ whenever $A$ is uniformly continuous w.r.t. $H^*$ where we have the additional assumption $h\gtrsim_{0(0)}\varpi$ and $\Phi$ depends additionally on $h$.
\end{enumerate}
\end{enumerate}
\end{theorem}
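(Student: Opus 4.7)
The plan is to combine Kuroda's negative translation with G\"odel's Dialectica interpretation and then majorize in the structure $\mathcal{M}^{\omega,X}$, following the template of \cite{GeK2008,Koh2005,Koh2008}. The new ingredients are (i) the treatment of the additional constants and purely-universal axioms of $\mathcal{V}^\omega$ and its siblings, which is exactly what Lemma \ref{lem:majmainresult} provides, and (ii) the treatment of the $\Delta$-axioms, which is absorbed via their Skolem normal forms $\widetilde{\Delta}$.

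First I would replace $\Delta$ by $\widetilde{\Delta}$ in the hypothesis. Each $\Delta$-axiom has the shape $\forall\underline{a}\,\exists\underline{b}\preceq\underline{r}(\underline{a})\,\forall\underline{c}\,F_{qf}$ with $F_{qf}$ quantifier-free, whose Skolem form $\exists\underline{B}\preceq\underline{r}\,\forall\underline{a},\underline{c}\,F_{qf}(\underline{a},\underline{B}\underline{a},\underline{c})$ is derivable from the original via the $\mathrm{QF}$-$\mathrm{AC}$ already available in $\mathcal{V}^\omega$. So the hypothesis upgrades to $\mathcal{V}^\omega+\widetilde{\Delta}\vdash A$ where $A$ is the target implication. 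After prenexing $A$ (noting that $B_\forall$/$C_\exists$ are $\forall$-/$\exists$-formulas of admissible type) and applying Lemma \ref{lem:ndinterpretation}, one obtains closed terms $\underline{t}$ of $\mathcal{V}^{\omega-}+(\mathrm{BR})$ realizing the existential quantifiers of $(A')^D$; the $\widetilde{\Delta}$-premises are themselves existential with witnesses bounded by the closed terms $\underline{r}$ and so contribute only closed data to the extraction. Since $u,v$ are of type $0$, a routine maximization reduces the extracted tuple to a single numerical bound $t(x,y,z)$ with the metastable reading
\[
\forall u\leq t(x,y,z)\,B_\forall(x,y,z,u)\rightarrow\exists v\leq t(x,y,z)\,C_\exists(x,y,z,v).
\]

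Next I apply Lemma \ref{lem:majmainresult} to $t$, obtaining a closed term $t^*$ of $\mathcal{A}^\omega+(\mathrm{BR})$ that majorizes $t$ in $\mathcal{M}^{\omega,X}$ under the anchor data $n,m,l,k$ (plus the extra parameters $o,p,g,h$ in the corresponding extensions). Let $s^*$ be a closed majorant of the closed term $s$. Since $\delta$ is of degree $1$, every $x\in S_\delta$ is majorizable and admits a self-majorant $x^*$ (e.g.\ $x^M(n):=\max_{k\le n}x(k)$); setting $y^*:=s^*(x^*)$ and
\[
\Phi(x,z^*,n,m,l,k):=t^*(x^*,y^*,z^*,n,m,l,k),
\]
the monotonicity built into majorization, together with $z^*\gtrsim z$ and $y\preceq s(x)$, forces $\Phi$ to dominate every value of the realizer on admissible inputs, which is precisely the uniform numerical bound we need.

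The final transfer from $\mathcal{M}^{\omega,X}$ to $\mathcal{S}^{\omega,X}$ uses the preparatory lemma that $\mathcal{S}^{\omega,X}\models\Delta$ implies $\mathcal{M}^{\omega,X}\models\widetilde{\Delta}$, validating the $\widetilde{\Delta}$-premises in the majorizable model; since the conclusion is arithmetic in its free variables, it then holds verbatim in the full type structure. The addendum items (1)--(4) go through by the same template, invoking the corresponding clauses of Lemma \ref{lem:majmainresult} to supply majorants for $A^\circ_X$, $\tilde\rho$, and $\varpi$ (respectively $g,p,h$) and weaving them through $\Phi$; for item (3), dropping $\mathrm{DC}$ lets us stay in $\mathcal{V}^{\omega-}$ without $(\mathrm{BR})$, yielding a primitive-recursive-in-the-sense-of-G\"odel bound. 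The main obstacle is the bookkeeping behind the $\Delta$-to-$\widetilde{\Delta}$ move: one must check that the $\underline{r}$-bounds are indeed closed (so that no ideal objects leak into $\Phi$) and that the resulting $\widetilde{\Delta}$-hypotheses fit correctly into the monotone-interpretation interface with the extracted term $t$. Once this is done, the remaining work is mechanical, with Lemma \ref{lem:majmainresult} carrying the entire novel load of verifying that the new abstract constants admit the required majorants in $\mathcal{M}^{\omega,X}$.
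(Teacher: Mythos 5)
Your proposal follows essentially the same route as the paper's proof: prenexation plus negative translation and Dialectica via Lemma \ref{lem:ndinterpretation}, extraction of closed terms $t_u,t_v$ and their maximization into a single numerical bound, majorization via Lemma \ref{lem:majmainresult}, the $x^M$/$s^*$ construction for the additional bounded quantifiers $\forall x^\delta\forall y\preceq_\sigma s(x)$, and the transfer from $\mathcal{M}^{\omega,X}$ to $\mathcal{S}^{\omega,X}$ together with the lemma that $\mathcal{S}^{\omega,X}\models\Delta$ implies $\mathcal{M}^{\omega,X}\models\widetilde{\Delta}$. The treatment of the variants (1)--(4) via the corresponding clauses of Lemma \ref{lem:majmainresult} also matches.

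The one step that does not work as written is your claim that each Skolem normal form in $\widetilde{\Delta}$ is \emph{derivable} from the corresponding $\Delta$-axiom by the $\mathrm{QF}$-$\mathrm{AC}$ available in $\mathcal{V}^\omega$. The matrix behind the existential quantifier $\exists\underline{b}$ is $\underline{b}\preceq_{\underline{\sigma}}\underline{r}\underline{a}\land\forall\underline{c}^{\underline{\gamma}}F_{qf}$, which is not quantifier-free: it contains the genuine universal block $\forall\underline{c}$, and moreover $\preceq_\rho$ at higher types (and already $\preceq_X$, via $\leq_\mathbb{R}$) unfolds to a $\Pi$-formula. Hence $\mathrm{QF}$-$\mathrm{AC}$ is not applicable, and no such syntactic Skolemization is available in the system --- nor is it needed. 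The implication actually required to upgrade the hypothesis is the trivial converse $\widetilde{\Delta}\vdash\Delta$: one adds the Skolem functionals $\underline{B}$ as fresh constants, takes $\widetilde{\Delta}$ as new purely universal axioms for them (majorizable because $\underline{B}\preceq_{\underline{\sigma}(\underline{\delta})}\underline{r}$ with $\underline{r}$ closed), and obtains $\mathcal{V}^\omega[\underline{B}]+\widetilde{\Delta}\vdash\dots$ from $\mathcal{V}^\omega+\Delta\vdash\dots$ since the Skolem forms imply the original axioms. The existence of an interpretation of $\underline{B}$ in $\mathcal{M}^{\omega,X}$ is then supplied purely semantically by the $\mathcal{S}^{\omega,X}\models\Delta\Rightarrow\mathcal{M}^{\omega,X}\models\widetilde{\Delta}$ lemma, which you do correctly invoke at the end. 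With this local correction your argument coincides with the paper's.
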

\begin{proof}
We only treat the case of $\mathcal{V}^\omega$. Proofs for item (4), (a) - (d) follow the same reasoning. The set $\Delta$ can be treated as in the proof of Theorem 5.13 in \cite{GuK2016}: Add the Skolem functionals $\underline{B}$ from $\widetilde{\Delta}$ to the language. Then, $\widetilde{\Delta}$ can be seen as another set of universal axioms and all the new constants are majorizable by assumption since $\underline{B}\preceq_{\underline{\sigma}(\underline{\delta})}\underline{r}$ and since $\underline{r}$ is a tuple of closed terms. Then, the following proof goes through for this extended system instead of $\mathcal{V}^\omega$ (where one has to note that Lemma \ref{lem:ndinterpretation} immediately holds for this purely universal extension as well):

\smallskip

First, assume that
\[
\mathcal{V}^{\omega}\vdash\forall z^\tau\left(\forall u^0 B_\forall(z,u)\rightarrow\exists v^0 C_\exists(z,v)\right).
\]
By assumption, $B_\forall(z,u)=\forall\underline{a} B_{qf}(z,u,\underline{a})$ and $C_\exists(z,v)=\exists\underline{b} C_{qf}(z,v,\underline{b})$ for quantifier-free $B_{qf}$ and $C_{qf}$. Thus, prenexing the above theorem of $\mathcal{V}^\omega$, we get
\[
\mathcal{V}^\omega\vdash\forall z^\tau\exists u,v,\underline{a},\underline{b}(B_{qf}(z,u,\underline{a})\rightarrow C_{qf}(z,v,\underline{b})).
\]
Using Lemma \ref{lem:ndinterpretation}, disregarding the realizers for $\underline{a},\underline{b}$ and reintroducing the quantifiers, we get closed terms $t_u,t_v$ of $\mathcal{V}^{\omega-}+(\mathrm{BR})$ such that
\[
\mathcal{V}^{\omega-}+(\mathrm{BR})\vdash\forall z^\tau(B_\forall (z,t_u(z))\rightarrow C_\exists(z,t_v(z))).
\]
By Lemma \ref{lem:majmainresult} there are closed terms $t^*_u,t^*_v$ of $\mathcal{A}^\omega+(BR)$ such that for all $n\geq\norm{c_X-J^A_{\tilde{\gamma}}(c_X)}$, $m\geq m_{\tilde{\gamma}}$, $l\geq\vert\tilde{\gamma}\vert$ and $k\geq\norm{c_X}$, we get
\[
\mathcal{M}^{\omega,X}\models t^*_u(n,m,l,k)\gtrsim t_u\land t^*_v(n,m,l,k)\gtrsim t_v\land\forall z^\tau(B_\forall(z,t_u(z))\rightarrow C_\exists(z,t_v(z)))
\]
for all normed spaces $(X,\norm{\cdot})$ and all m-accretive operators $A$ with resolvents $J^A_\gamma$ defining $\mathcal{M}^{\omega,X}$ as in Lemma \ref{lem:majmainresult}. Define 
\[
\Phi(z^*,n,m,l,k):=\max(t^*_u(n,m,l,k)(z^*),t^*_v(n,m,l,k)(z^*)).
\]
Then
\[
\mathcal{M}^{\omega,X}\models\forall u\leq\Phi(z^*,n,m,l,k) B_\forall(z,u)\rightarrow\exists v\leq\Phi(z^*,n,m,l,k) C_\exists(z,v)
\]
holds for all $n\geq\norm{c_X-J^A_{\tilde{\gamma}}(c_X)}$, $m\geq m_{\tilde{\gamma}}$, $l\geq\vert\tilde{\gamma}\vert$, $k\geq\norm{c_X}$ as well as all $z\in M_\tau$ and $z^*\in M_{\widehat{\tau}}$ with $z^*\gtrsim z$. The conclusion that $\mathcal{S}^{\omega,X}$ satisfies the same sentence can made as in the proof of Theorem 17.52 in \cite{Koh2008}.

\smallskip

For the additional $\forall x^\delta\forall y\preceq_\sigma s(x)$, let $\delta=1$ for simplicity. For $x$ of type $\tau$, we can define $x^M(y^0)=\max_\mathbb{N}\{x(i)\mid 1\leq i\leq y\}$. We get $x^M\gtrsim x$ and if $s(x)\geq_\sigma y$, then $s^*(n,m,l,k)(x^M)\gtrsim y$ where $s^*$ is as in Lemma \ref{lem:majmainresult}. Note now that the above result immediately extends to tuples $z$. Then by the above result for tuples instead of a single $z$, there now is a functional $\Phi'(x^*,y^*,z^*,n,m,l,k)$ such that
\begin{align*}
&\mathcal{S}^{\omega,X}\models\forall u\leq\Phi'(x^*,y^*,z^*,n,m,l,k)B_\forall (x,y,z,u)\\
&\qquad\qquad\qquad\rightarrow\exists v\leq\Phi'(x^*,y^*,z^*,n,m,l,k)C_\exists(x,y,z,v)
\end{align*}
for all $x\in S_{\delta},y\in S_{\sigma},z\in S_{\tau}$ where $y\preceq s(x)$ and with $x^*\gtrsim x$, $y^*\gtrsim y$, $z^*\gtrsim z$ and $n,m,l,k$ as before.
In particular, we have 
\begin{align*}
&\mathcal{S}^{\omega,X}\models\forall u\leq\Phi'(x^M,y^*,z^*,n,m,l,k)B_\forall (x,y,z,u)\\
&\qquad\qquad\qquad\rightarrow\exists v\leq\Phi'(x^M,y^*,z^*,n,m,l,k)C_\exists(x,y,z,v)
\end{align*}
for any such $x,y,z$ and $y^*,z^*$ and thus, as $y\leq_\sigma s(x)$ yields $s^*(n,m,l,k)(x^M)\gtrsim y$, we get
\begin{align*}
&\mathcal{S}^{\omega,X}\models\forall u\leq\Phi'(x^M,s^*(n,m,l,k)(x^M),z^*,n,m,l,k)B_\forall (x,y,u)\\
&\qquad\qquad\qquad\rightarrow\exists v\leq\Phi'(x^M,s^*(n,m,l,k)(x^M),z^*,n,m,l,k)C_\exists(x,y,v).
\end{align*}
Then define $\Phi(x,z^*,n,m,l,k)=\Phi'(x^M,s^*(n,m,l,k)(x^M),z^*,n,m,l,k)$.

\smallskip

Item (1) can be shown as in the proof of Theorem 17.52 from \cite{Koh2008} (see page 428). Further, (2) is immediate and (3) follows from the fact that without $\mathrm{DC}$, bar recursion becomes superfluous.
\end{proof}
\begin{remark}\label{rem:extensionsMeta}
The above results can be immediately extended to augmentations of the systems considered here by, e.g., the following:
\begin{enumerate}
\item[(1)] Further abstract metric and normed spaces which are treated simultaneously (see the discussion in \cite{Koh2008} in Section 17.6).
\item[(2)] Further constants for monotone operators and their resolvents which in particular may mix partial and non-partial resolvents.
\item[(3)] A constant $C^{X(X0)}$ which associates with every Cauchy sequence a limit to treat complete spaces and proofs which make essential use of this completeness assumption (see \cite{Koh2008}, pages 432-434, and also \cite{Sip2019} for a recent practical use in the context of $L_p$-spaces). As commented on before, this may in particular occur in proofs relying on the operator $A^\circ$ as completeness of the space is needed to guarantee its existence and thus may be occurring at other places in such proofs in an essential way.
\item[(4)] Additional $\Delta$-formulas as axioms or additional constants (where corresponding defining axioms guarantee majorizability) which are of admissible types (see the discussion in \cite{Koh2008}, Section 17.5). This in particular includes, e.g., constants for moduli of uniform convexity or uniform smoothness of the space (see \cite{Koh2008}, Section 17.3 for more details on the former).
\end{enumerate}
\end{remark}

\section*{Acknowledgments}
This paper is a revised version of parts of my master thesis \cite{Pis2022} written under the supervision of Prof. Dr. Ulrich Kohlenbach at TU Darmstadt. In that vein, I want to thank Prof. Kohlenbach. His lectures on applied proof theory have made the greatest impact on my mathematical interest and since my first meetings with him, I have immensely enjoyed (and benefited from) our various discussions and his usual precise comments, in particular also while working on my thesis and this work.

Also I want to thank Sam Sanders for providing valuable comments regarding the presentation of the subject matter.

\bibliographystyle{plain}
\bibliography{ref}

\end{document}